\documentclass[11pt, a4paper, oneside]{amsart}
\usepackage{comment}
\usepackage[english]{babel}
\usepackage{amsmath, amsthm, amsfonts, mathrsfs, amssymb}
\usepackage{mathtools}
\mathtoolsset{centercolon}
\usepackage{booktabs}
\usepackage[shortlabels]{enumitem}
\usepackage[colorlinks, citecolor = blue]{hyperref}
\usepackage{todonotes}
\usepackage{cleveref}
\usepackage{fullpage}
\usepackage[dvipsnames]{xcolor}
\usepackage[T1]{fontenc}

\usepackage{color}
\usepackage{graphicx}

\setlist[itemize]{leftmargin=25pt}
\setlist[enumerate]{leftmargin=25pt}
\newcommand{\Z}{\ensuremath{\mathbb{Z}}}
\newcommand{\Q}{\ensuremath{\mathbb{Q}}}
\newcommand{\R}{\ensuremath{\mathbb{R}}}

\newcommand{\E}{\ensuremath{\mathbb{E}}}
\renewcommand{\P}{\ensuremath{\mathbb{P}}}

\newcommand{\mb}{\mathbf}
\newcommand{\mc}{\mathcal}
\newcommand{\ms}{\mathscr}

\DeclarePairedDelimiter{\cbrace}{\{}{\}}

\DeclarePairedDelimiter{\ip}{\langle}{\rangle}
\DeclarePairedDelimiter{\nrm}{\lVert}{\rVert}

\newcommand{\nrms}[1]{\Bigl\|#1\Bigr\|}

\newcommand{\ips}[1]{\Bigl\langle#1\Bigr\rangle}

\DeclareMathOperator*{\esssup}{ess\,sup}

\newcommand{\dd}{\hspace{2pt}\mathrm{d}}

\newcommand{\ee}{\mathrm{e}}

\def\avint_#1{\mathchoice{\mathop{\kern 0.2em\vrule width 0.6em height 0.69678ex depth -0.58065ex \kern -0.8em \intop}\nolimits_{\kern -0.4em#1}}{\mathop{\kern 0.1em\vrule width 0.5em height 0.69678ex depth -0.60387ex \kern -0.6em \intop}\nolimits_{#1}} {\mathop{\kern 0.1em\vrule width 0.5em height 0.69678ex depth -0.60387ex \kern -0.6em \intop}\nolimits_{#1}} {\mathop{\kern 0.1em\vrule width 0.5em height 0.69678ex depth -0.60387ex \kern -0.6em \intop}\nolimits_{#1}}}
\DeclareMathOperator{\distD}{dist_{\mathscr D}}
\DeclareMathOperator{\Mdyad}{M_{\mathscr D}}
\newcommand{\avg}[2]{\langle #1\rangle_{#2}^{\mu}}
\newcommand{\avgsig}[2]{\langle #1\rangle_{#2}^{\sigma}}
\newcommand{\Acal}{\mathcal A}
\newcommand{\AinfD}{A_\infty^{\mathscr D}(\mu)}
\newcommand{\Chat}{\mathcal C}

\newtheorem{theorem}{Theorem}
\newtheorem{corollary}[theorem]{Corollary}
\newtheorem{lemma}[theorem]{Lemma}
\newtheorem{proposition}[theorem]{Proposition}

\newtheorem{ltheorem}{Theorem}

\newtheorem{lcorollary}[ltheorem]{Corollary}

\theoremstyle{remark}
\newtheorem{remark}[theorem]{Remark}

\theoremstyle{definition}

\numberwithin{theorem}{section}
\numberwithin{equation}{section}

\newcommand{\proofpart}[1]{%
  \par\medskip
  \noindent\textit{#1.}\par\smallskip
}
\allowdisplaybreaks

\title[Sharp mixed $A_p$--$A_\infty$ estimates for sparse operators]
{Sharp mixed $A_p$--$A_\infty$ estimates for sparse operators on filtered and
nonhomogeneous measure spaces}

\author{Francisco Gon\c{c}alves}
\author{Emiel Lorist}

\address[Francisco Gon\c{c}alves and Emiel Lorist]{\hfill\break\indent
Delft Institute of Applied Mathematics \hfill\break\indent
Delft University of Technology \hfill\break\indent
P.O. Box 5031 \hfill\break\indent
2600 GA Delft, The Netherlands}
\email{f.g.j.diasdecarvalho@tudelft.nl}
\email{e.lorist@tudelft.nl}

\thanks{F.G. was supported by the Dutch Research Council (NWO) through grant \href{https://www.nwo.nl/en/projects/vividi223019}{VI.Vidi.223.019}. E.L. was supported by the Dutch Research Council (NWO) through grant \href{https://doi.org/10.61686/ZGRMR99948}{VI.Veni.242.057}.}
\keywords{Sparse operators, Muckenhoupt weights, weighted inequalities, filtered measure spaces, non-doubling measures, Rubio de Francia square functions,
Haar shifts}
\subjclass[2020]{Primary 42B20; Secondary 42B25, 42A61}
\begin{document}

\begin{abstract}
We prove mixed $A_p$--$A_\infty$ estimates for sparse operators in two non-doubling settings.
In the first setting, we consider sparse operators defined using stopping times in continuous time. We obtain both strong- and weak-type bounds with the same powers of the weight characteristics as in the classical setting. Some of our weak-type bounds are even new for the  dyadic filtration on $\R^d$ and, in particular, imply a sharp weak-type $(2,2)$ estimate for Rubio de Francia square functions, solving a problem left open by Garg, Roncal and Shrivastava \cite{GRS21}.

In the second setting, we consider dyadic sparse forms in which distinct cubes may interact, provided their dyadic distance is bounded. We obtain strong-type bounds with the same powers of the weight characteristics as in the classical setting. As a one-dimensional application, we obtain strong-type bounds for Haar shifts over balanced non-doubling measures, answering a quantitative question posed by Conde-Alonso, Pipher, and Wagner \cite{CPW}.
\end{abstract}

\maketitle

\section{Introduction}
Sparse domination is a technique in real-variable harmonic analysis to bound an operator, either pointwise, in norm or in bilinear form, by a positive averaging operator using a sparse family of cubes.
Sparse domination has its roots in Lerner's work \cite{Lerner2010} and became increasingly influential following Hytönen's proof of the $A_2$-theorem \cite{hytonen_sharp_2012}, for which Lerner gave an alternative proof using sparse operators \cite{lerner_simple_2013}.
Since then, sparse domination has developed into a flexible toolbox in harmonic analysis \cite{conde-alonso_pointwise_2016,lerner_pointwise_2015, LernerLoristOmbrosi2022, LernerNazarov2019}, extending far beyond classical Calder\'on--Zygmund theory \cite{bernicot_sharp_2016,CondeAlonsoCuliucDiPlinioOu2017,Lerner2019} and having implications beyond weighted estimates \cite{CuliucDiplinioOu2017, LaukkarinenLorist2026,LoristNieraeth2022,OrtizCaraballoPerezRela2013}.

Classically, sparse operators are built on a dyadic lattice, and each cube interacts only with itself. In this paper, we will relax these two features separately. First, in a continuous-time setting, we replace dyadic generations by stopping times. Second, in the dyadic setting, we allow distinct cubes to interact, using a finite-complexity condition. We will discuss these two generalizations in more detail below, for which we first recall the classical setting.

\smallskip

Let \(\mathscr D\) be a dyadic lattice in \(\mathbb R^d\). We say a collection 
\(\mathcal S\subseteq\mathscr D\) is \emph{\(\eta\)-sparse} if, for every $Q \in \mc{S}$ there is a measurable set $E_Q\subseteq Q$ such that $|E_Q|\ge\eta|Q|$, and the sets $E_Q$ are pairwise disjoint. The classical \emph{sparse operator of type} $r \in (0,\infty)$ is given by
\begin{equation*}
\mathcal A^r_{\mathcal S,\mathscr D}f(x)
:=
\Bigl(\sum_{Q\in\mathcal S}\ip{|f|}_Q^r\mathbf 1_Q(x)\Bigr)^{1/r},
\qquad
\ip{f}_Q:=\frac1{|Q|}\int_Qf\,\mathrm dx.
\end{equation*}
Once sparse domination for an operator is known, its (quantitative) weighted bounds reduce to estimating the corresponding positive sparse operator in terms of the weight characteristics. In the classical setting, this leads to mixed $A_p$--$A_\infty$ bounds, as developed in \cite{hytonen_sharp_2011}. For weights $\sigma,\omega,v$ and $p\in(1,\infty)$, the classical dyadic two weight characteristics \cite{muckenhoupt1972} are
\begin{align*}
[\omega,\sigma]_{A_p}
&:=
\sup_{Q \in \ms{D}}
\ip{\omega}_Q\ip{\sigma}_Q^{p-1},\\
[\omega,v]_{A_1}
&:=
\Bigl\|
\frac{M_{\mathscr D}\omega}{v}
\Bigr\|_{L^\infty}.
\end{align*}
and the dyadic Fujii--Wilson $A_\infty$-characteristic \cite{fuji78,Wilson87} is
\begin{equation*}
[\omega]_{A_\infty}
:=
\sup_{Q \in \ms{D}}
\frac{1}{\omega(Q)}\int_Q M_{\ms{D}}(\mathbf 1_Q\omega),
\end{equation*}
where the dyadic maximal operator above is given by
$
M_{\ms{D}}f
:=
\sup_{Q \in \ms{D}}
\ip{|f|}_Q \mb{1}_Q.
$ 
We note that we need to treat the case $p=1$ separately due to our weight normalization.

Mixed strong-type $A_p$-$A_\infty$-estimates for $\mathcal A^r_{\mathcal S,\mathscr D}$ were established by Lacey and Li \cite{lacey_li_square_2016}. In the form given by Hytönen and Li {\cite[Theorem~1.1]{hytonen_weak_2017}}, these estimates  for $p\in(1,\infty)$ are
\begin{equation}\label{eq:thmhytonenli}
\| \mathcal{A}^r_{\mathcal S,\mathscr D}(\cdot\sigma)\|_{L^p_\sigma\to L^p_\omega}
\lesssim_{p,r,\eta}
[\omega,\sigma]_{A_p}^{1/p}
\bigl([\sigma]_{A_\infty}^{1/p} +
[\omega]_{A_\infty}^{(1/r-1/p)_+}
\bigr),
\end{equation}
where
\(L^p_\sigma:=L^p(\mathbb R^d,\sigma \dd x)\).
For the corresponding weak-type estimate, Hytönen and Li
\cite[Theorem~1.2]{hytonen_weak_2017} showed that,
for \(p\in(1,\infty)\) with \(p\ne r\),
\begin{equation}\label{eq:thmhytonenliweak}
\|\mathcal A^r_{\mathcal S,\mathscr D}(\cdot\sigma)\|
_{L^p_\sigma\to L^{p,\infty}_\omega}
\lesssim_{p,r,\eta}
[\omega,\sigma]_{A_p}^{1/p}
[\omega]_{A_\infty}^{(1/r-1/p)_+}.
\end{equation}
and for $p=r$, the methods of Domingo-Salazar, Lacey and Rey \cite{DomingoSalazarLaceyRey2016} yield
\begin{equation}\label{eq:thmborderline}
\|\mathcal A^r_{\mathcal S,\mathscr D}(\cdot\sigma)\|
_{L^p_\sigma\to L^{p,\infty}_\omega}
\lesssim_{p,r,\eta}
[\omega,\sigma]_{A_p}^{1/p}\bigl({\log(\ee+[\omega]_{A_\infty})}\bigr)^{1/p}.
\end{equation}
For weak-type estimates when $p=1$ we refer to \cite{DomingoSalazarLaceyRey2016,NS26}. Furthermore, we note that Zorin-Kranich \cite{zorin-kranich_a_p-a_infty_2019} extended these mixed \(A_p\)–\(A_\infty\) bounds for sparse operators to abstract dyadic grids in a general non-atomic measure space.

\subsection*{Continuous-time sparse operators}
Continuous-time sparse domination was introduced in
\cite{domelevo_continuous_2025}. We work throughout on a $\sigma$-finite
filtered measure space
\(
(X,\mathcal F,(\mathcal F_t)_{t\in\mathbb R},\mu).
\)
Here $\sigma$-finite means that
$\mu|_{\mathcal F_t}$ is $\sigma$-finite for every $t\in\mathbb R$.
The filtration is indexed by \(\mathbb R\) and need not be discrete or atomic. We write $\E[\cdot\mid\mathcal F_t]$ for conditional expectation and, for a stopping time $\tau$, denote by $\mathcal F_\tau$ the associated stopped
\(\sigma\)-algebra.
For a weight $\sigma$ we write \(\mathrm d\mu_\sigma:=\sigma\,\mathrm d\mu\) and \(L^p_\sigma:=L^p(X,\mu_\sigma)\).

Let $0<\eta<1$, let 
$\mathcal S=\{\tau_j\}_{j=0}^\infty$ be an increasing sequence of stopping times, and let $\nu$ be a measure such that $\nu|_{\mathcal F_{\tau_j}}$ is $\sigma$-finite for every $j\geq 0$. Define the nested sets $E_j:=\{\tau_j<\infty\}$. Then $\mc{S}$ is called $\eta$-sparse with respect to $\nu$ if
\[
\nu(A\cap E_{j+1})\le(1-\eta)\nu(A)
\]
for every $j\ge0$ and every $A\in\mathcal F_{\tau_j}$ with
$A\subseteq E_j$ and $\nu(A)<\infty$. If $\nu=\mu$, we omit it from notation.
This definition recovers the dyadic notion when using the dyadic filtration. Indeed, given a dyadic sparse family, organize its cubes into generations and define $\tau_j(x)$ as the dyadic level of the unique cube in the $j$-th generation containing $x$, with $\tau_j(x)=\infty$ if no such cube exists. The condition above is then precisely the $\eta$-sparseness condition introduced above, and the resulting sparse operators coincide.

For $r>0$, we define
\begin{equation*}
\mathcal A_{\mathcal S}^r(f)
:=
\Bigl(
\sum_{j=0}^\infty
\E[|f|\mid\mathcal F_{\tau_j}]^r\mathbf 1_{E_j}
\Bigr)^{1/r}.
\end{equation*}
For $r=1$, write $\mathcal A_{\mathcal S}:=\mathcal A_{\mathcal S}^1$.
Continuous-time analogues of the classical $A_p$, $A_1$ and $A_\infty$  characteristics are
\begin{align*}
[\omega,\sigma]_{A_p}
&:=
\sup_\tau
\bigl\|\mathbf 1_{\{\tau<\infty\}}
\E[\omega\mid\mathcal F_\tau]
\E[\sigma\mid\mathcal F_\tau]^{p-1}
\bigr\|_{L^\infty_\mu},\qquad {p\in (1,\infty)}
\\
[\omega,v]_{A_1}
&:=
\sup_\tau
\Bigl\|
\mathbf1_{\{\tau<\infty\}}
\frac{\E[\omega\mid\mathcal F_\tau]}{v}
\Bigr\|_{L^\infty_\mu},\\
[\omega]_{A_\infty}
&:=
\sup_\tau
\Bigl\|\mathbf 1_{\{\tau<\infty\}}
\frac{\E[M_\tau\omega\mid\mathcal F_\tau]}
{\E[\omega\mid\mathcal F_\tau]}
\Bigr\|_{L^\infty_\mu}.
\end{align*}
where the suprema range over stopping times and
\begin{equation*}
M_\tau f:=\esssup_{\rho\ge\tau}\mathbf 1_{\{\rho<\infty\}}
\E[|f|\mid\mathcal F_\rho].
\end{equation*}

Our first main result shows that the mixed estimate in \eqref{eq:thmhytonenli} also holds in continuous time, with the same powers as in the classical dyadic result.

\begin{ltheorem}[Continuous-time strong \(A_p\)--\(A_\infty\) estimate]
\label{thm: sparse operators ap ainfty bound}
Let $1<p<\infty$, $r>0$, $0<\eta<1$, and let
$\mathcal S=\{\tau_j\}_{j=0}^\infty$ be an $\eta$-sparse sequence of stopping
times. For weights $\sigma,\omega$, we have
\begin{equation*}
\|\mathcal A^r_{\mathcal S}(\cdot\sigma)\|_{L^p_\sigma\to L^p_\omega}
\lesssim_{p,r,\eta}
[\omega,\sigma]_{A_p}^{1/p}
\left(
[\sigma]_{A_\infty}^{1/p}
+[\omega]_{A_\infty}^{(1/r-1/p)_+}
\right).
\end{equation*}
\end{ltheorem}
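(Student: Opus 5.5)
We follow the Hytönen--Li / Lacey--Li scheme and transplant it to the stopping-time setting. Throughout, the sparse sets $E_j=\{\tau_j<\infty\}$ replace the generations of cubes and conditional expectations $\E[\cdot\mid\mathcal F_{\tau_j}]$ replace averages.

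\textbf{Step 1: reduction to $r=1$ via duality.} We distinguish $r\le p$ and $r>p$, writing $q:=p/r$. When $r\le p$, so $q\ge1$, we have
\[
\|\mathcal A^r_{\mathcal S}(f\sigma)\|_{L^p_\omega}^r=\Bigl\|\sum_j \E[|f|\sigma\mid\mathcal F_{\tau_j}]^r\mathbf 1_{E_j}\Bigr\|_{L^{q}_\omega}.
\]
We dualize against $g\in L^{q'}_\omega$ and use the self-adjointness of conditional expectation. This gives the bilinear form
\[
\sum_j\int_{E_j}\E[|f|\sigma\mid\mathcal F_{\tau_j}]^r\,\E[g\omega\mid\mathcal F_{\tau_j}]\,\mathrm d\mu.
\]
For $r>p$ we instead use $\ell^r\subseteq\ell^p$, which bounds the operator by $\mathcal A^p_{\mathcal S}$. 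That is consistent with the exponent $(1/r-1/p)_+=0$, and the remaining task is the $\sigma$ term with $r=p$.

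\textbf{Step 2: principal stopping times.} We construct, as in the dyadic parallel stopping argument, a sparse subsequence of stopping times $\rho_k$ out of the $\tau_j$. The time $\rho_{k+1}$ is the first $\tau_j\ge\rho_k$ at which $\E[|f|\sigma\mid\mathcal F_{\tau_j}]/\E[\sigma\mid\mathcal F_{\tau_j}]$ exceeds twice its value at $\rho_k$. Between consecutive principal times, the ratio of the $\sigma$-average of $f$ is then controlled. Summing the geometric series in $j$ over the $\eta$-sparse pieces $E_j\setminus E_{j+1}$ yields
\[
\sum_{\tau_j\in[\rho_k,\rho_{k+1})}\E[\sigma\mid\mathcal F_{\tau_j}]\mathbf 1_{E_j}\ \lesssim\ \text{(a quantity whose $\mathcal F_{\rho_k}$-conditional expectation is }\le\E[M_{\rho_k}(\sigma\mathbf 1_{\cdot})\mid\mathcal F_{\rho_k}]).
\]
This is precisely where $[\sigma]_{A_\infty}$ enters, through the Fujii--Wilson definition with the stopped maximal function $M_\tau$. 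Combining this with the $A_p$ characteristic via $\E[\omega\mid\mathcal F_\tau]\E[\sigma\mid\mathcal F_\tau]^{p-1}\le[\omega,\sigma]_{A_p}$ on $\{\tau<\infty\}$ produces the term $[\omega,\sigma]_{A_p}^{1/p}[\sigma]_{A_\infty}^{1/p}$. We also need Doob's inequality $\|M_0(\cdot)\|_{L^p_\sigma}$ with respect to $\mu_\sigma$-weighted conditional expectations, which follows from the martingale structure of the weighted measure. A symmetric stopping on $g\omega$ relative to $\omega$ gives the $[\omega]_{A_\infty}^{1/r-1/p}$ factor. This factor arises as an interpolation exponent: the $\omega$-Carleson sum is bounded by $[\omega]_{A_\infty}$ in $L^1$ and trivially in $L^\infty$, and the exponent $1/q'$ gives $[\omega]_{A_\infty}^{1-r/p}$ raised to the power $1/r$.

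\textbf{Step 3: the key Carleson estimate.} The main technical input is a continuous-time Carleson embedding with the $A_\infty$ constant. For every stopping time $\tau$ and every $A\in\mathcal F_\tau$ we want
\[
\sum_{j:\tau_j\ge\tau}\int_{A\cap E_j}\E[\sigma\mid\mathcal F_{\tau_j}]\,\mathrm d\mu\ \lesssim_\eta\ [\sigma]_{A_\infty}\,\sigma(A\cap\{\tau<\infty\}).
\]
We would prove this by writing $\mathbf 1_{E_j}\le\eta^{-1}\E[\mathbf 1_{E_j\setminus E_{j+1}}\mid\mathcal F_{\tau_j}]$ using sparseness. That converts the sum into $\eta^{-1}\int_A M_\tau\sigma\,\mathrm d\mu$, since the sets $E_j\setminus E_{j+1}$ are disjoint. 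The bound then follows from the definition of $[\sigma]_{A_\infty}$. The main obstacle is that the $\tau_j$ need not be comparable to an arbitrary $\tau$, and the sets $\{\tau_j\ge\tau\}$ must be handled carefully to keep measurability in $\mathcal F_{\tau_j}$ and $\sigma$-finiteness. We would handle this by replacing $\tau_j$ with $\tau_j\vee\tau$ restricted to $\{\tau_j\ge\tau\}\in\mathcal F_{\tau_j\wedge\tau}$. The final step feeds this Carleson bound into the weighted Carleson embedding theorem for filtrations, and that completes both terms.
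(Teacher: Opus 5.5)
Your treatment of $r\ge p$ works, and is more direct than the paper's. Pointwise $\mathcal A^r_{\mathcal S}\le\mathcal A^p_{\mathcal S}$, and for $r=p$ the $L^p_\omega$-norm is a linear sum, so the $A_p$ condition on each $E_j$ gives
\[
\|\mathcal A^p_{\mathcal S}(f\sigma)\|_{L^p_\omega}^p\le[\omega,\sigma]_{A_p}\sum_{j\ge0}\int_{E_j}\E_\sigma[|f|\mid\mathcal F_{\tau_j}]^p\,\mathrm d\mu_\sigma .
\]
Your Step~3 says exactly that $\{\mathbf 1_{E_j}\}_j$ is a Carleson sequence for $\mu_\sigma$ with constant $\eta^{-1}[\sigma]_{A_\infty}$. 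Principal stopping (or the martingale Carleson embedding) then finishes. The paper instead routes $r\ge p$ through the testing constant, using $\mathfrak T_r\le\mathfrak T_{p-1}^{r/(p-1)}$, which brings in the machinery built for $r<p$.

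The genuine gap is the case $r<p$, which is where the \emph{sum} of the two $A_\infty$ terms has to be earned. Step~2 only asserts that stopping, the linear Carleson bound and $A_p$ ``produce'' these terms. The most direct rigorous version of your ingredients is to write $\sigma_j^r\omega_j\le[\omega,\sigma]_{A_p}^{r/p}\sigma_j^{r/p}\omega_j^{1-r/p}$ on $E_j$ (with $\sigma_j:=\E[\sigma\mid\mathcal F_{\tau_j}]$, $\omega_j:=\E[\omega\mid\mathcal F_{\tau_j}]$), apply H\"older with exponents $p/r$ and $(p/r)'$ to your dual form, and use two Carleson embeddings. That yields only the product $[\sigma]_{A_\infty}^{1/p}[\omega]_{A_\infty}^{1/r-1/p}$. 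To get the sum, after splitting by which principal time comes first, you need testing bounds for \emph{powers} of sparse sums with a single factor of $A_\infty$:
\[
\E\bigl[\mathcal H_\tau^{p/r}\omega\mid\mathcal F_\tau\bigr]\lesssim[\omega,\sigma]_{A_p}[\sigma]_{A_\infty}\E[\sigma\mid\mathcal F_\tau],
\qquad
\E\bigl[(\mathcal H^*_\tau)^{(p/r)'}\sigma\mid\mathcal F_\tau\bigr]\lesssim[\omega,\sigma]_{A_p}^{r/(p-r)}[\omega]_{A_\infty}\E[\omega\mid\mathcal F_\tau].
\]
Here $\mathcal H_\tau=\sum_k\sigma_k^r\mathbf 1_{E_k}\mathbf 1_{\{\tau_k\ge\tau\}}$, and $\mathcal H^*_\tau$ is the same sum with $\sigma_k^{r-1}\omega_k$ in place of $\sigma_k^r$. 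Your Step~3 controls only the first power of $\sum_j\sigma_j\mathbf 1_{E_j}$. There is also no ``trivial $L^\infty$ bound'' to interpolate with: the $L^\infty$ bound of the conditional Carleson sum \emph{is} the $A_\infty$ estimate.

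The missing idea has two parts. First, expand the power with a conditional principal lemma of Cascante--Ortega--Verbitsky type. Second, bound the resulting inner tails $\E[\sum_{\ell\ge k}\sigma_\ell^r\omega_\ell\mathbf 1_{E_\ell}\mid\mathcal F_{\tau_k}]$ by sparseness alone, via
\[
\sigma_\ell^r\omega_\ell\le[\omega,\sigma]_{A_p}^{\lambda}\sigma_\ell^{r-\lambda(p-1)}\omega_\ell^{1-\lambda},\qquad \lambda=\min\{1,\tfrac r{p-1},\tfrac r{p-r}\}.
\]
The two exponents sum to less than one, so sparseness gives geometric decay, and $A_\infty$ is used only once, on the outer linear sum.

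Finally, when the $g$-principal time comes first, the dependence on $f$ through $\E_\sigma[f\mid\mathcal F_{\tau_j}]^r$ is nonlinear, and your sketch does not say how it is handled. The paper linearizes via $\E_\sigma[f\mid\mathcal F_{\tau_j}]^r\le\E_\sigma[(M_{\sigma,\mathcal S}f)^r\mid\mathcal F_{\tau_j}]$ and then applies its sequential two weight $T1$ theorem with exponent $p/r$. Your proposal has no substitute for this step.
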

For \(r=1\), Theorem \ref{thm: sparse operators ap ainfty bound} extends the one weight estimate in \cite[Theorem~4]{domelevo_continuous_2025} to a two weight mixed \(A_p\)--\(A_\infty\) bound. Combined with the sparse domination result in \cite[Theorem~1.1]{ChenZhangZhang2026}, the case \(r=1\)  gives mixed two weight bounds for discrete-time martingale transforms and their maximal functions.

The next theorem gives weak-type estimates, including the two weight \(p=1\) endpoint, which we need to treat separately due to our weight normalization

\begin{ltheorem}[Continuous-time weak \(A_p\)--\(A_\infty\) estimates]
\label{thm: sparse operators weak type bound}
Let \(r>0\), \(0<\eta<1\), and let
\(\mathcal S=\{\tau_j\}_{j=0}^\infty\) be an
\(\eta\)-sparse sequence of stopping times.
\begin{enumerate}[label=\normalfont(\roman*)]
\item\label{item: sparse operators weak type bound p not 1}
Let \(1<p<\infty\), and let \(\sigma,\omega\) be weights. Then
\[
\|\mathcal A_{\mathcal S}^r(\cdot\sigma)\|
_{L^p_\sigma\to L^{p,\infty}_\omega}
\lesssim_{p,r,\eta}
[\omega,\sigma]_{A_p}^{1/p}
\begin{cases}
[\omega]_{A_\infty}^{1/r-1/p}, & 0<r<p,\\[1mm]
\bigl(\log(e+[\omega]_{A_\infty})\bigr)^{1/p},
    & r=p,\\[1mm]
1, & r>p.
\end{cases}
\]

\item\label{thm:sparse-weak-endpoint}
Let \(v,\omega\) be weights. Then
\[
\|\mathcal A_{\mathcal S}^r\|
_{L^1_v\to L^{1,\infty}_\omega}
\lesssim_{r,\eta}
[\omega,v]_{A_1}
\begin{cases}
[\omega]_{A_\infty}^{1/r-1}, & 0<r<1,\\[1mm]
\log(e+[\omega]_{A_\infty}), & r=1,\\[1mm]
1, & r>1.
\end{cases}
\]
\end{enumerate}
\end{ltheorem}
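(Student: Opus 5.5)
We will follow the Domingo-Salazar--Lacey--Rey / Hyt\"onen--Li strategy, carried out with stopping times in place of dyadic cubes. Fix \(\lambda>0\), and for \(p>1\) replace \(f\) by \(f\sigma\). The basic idea is to split the sparse sum according to the size of the averages. For \(k\ge0\), write \(\mathcal S_k\) for the part of the sum over \(j\) at points where \(\E[|f|\sigma\mid\mathcal F_{\tau_j}]\in(2^{-k-1}\lambda,2^{-k}\lambda]\). Since each of these sets is \(\mathcal F_{\tau_j}\)-measurable, restricting to them keeps us inside the stopping-time framework.

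First we remove the large averages. On \(\{\sup_j \E[|f|\sigma\mid\mathcal F_{\tau_j}]\mathbf 1_{E_j}>\lambda/2\}\) we are dealing with a martingale maximal function. Its \(\omega\)-measure is controlled by the weak-type bound for the maximal operator. This bound follows from a stopping-time argument: take \(\rho\) to be the first time the average exceeds \(\lambda/2\), and apply H\"older together with the \(A_p\) condition, or the \(A_1\) condition when \(p=1\). It costs exactly \([\omega,\sigma]_{A_p}^{1/p}\), respectively \([\omega,v]_{A_1}\).

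On the complement, we estimate each level \(k\) in \(L^q_\omega\) for a suitable \(q\). Within a level, the sparse sum localized to the maximal stopping sets has bounded overlap in the following sense. By \(\eta\)-sparseness, the number of indices \(j\) active at a point has exponential tails: \(\mu(A\cap\{\tau_{j+m}<\infty\})\le(1-\eta)^m\mu(A)\). We need this with respect to \(\omega\) rather than \(\mu\). This is where \([\omega]_{A_\infty}\) enters: we use a continuous-time analogue of the fact that Fujii--Wilson \(A_\infty\) weights satisfy \(\omega(A\cap E_{j+m})\lesssim e^{-cm/[\omega]_{A_\infty}}\omega(A)\). We prove this by summing \(\sum_j \E[\omega\mid\mathcal F_{\tau_j}]\mathbf 1_{E_j}\le C_\eta M_{\tau_0}\omega\) on \(A\), so that the number of layers of \(\omega\)-mass is \(O([\omega]_{A_\infty})\), and then iterating.

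Consequently the counting function \(N_k=\#\{j\text{ active at level }k\}\) satisfies \(\omega(N_k>t[\omega]_{A_\infty})\lesssim e^{-ct}\,\omega(\text{support}_k)\). The contribution of level \(k\) is then \((2^{-k}\lambda)N_k^{1/r}\). We choose \(\epsilon_k\) with \(\sum\epsilon_k\lesssim1\) and bound \(\omega(\{(2^{-k}\lambda)^rN_k>\epsilon_k\lambda^r\})\) by Chebyshev at an exponent \(s\) slightly different from \(p/r\). The support measure \(\omega(\text{support}_k)\) is bounded by \([\omega,\sigma]_{A_p}(2^k/\lambda)^p\int_{\text{level }k}|f|^p\sigma\), where the last integral is localized through the principal stopping sets.

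Summing in \(k\) gives the three regimes. For \(r<p\) the geometric gain \(2^{-k(p-r)}\) survives, and optimizing \(t\) yields the factor \([\omega]_{A_\infty}^{p/r-1}\) before taking \(p\)-th roots. For \(r>p\) the sum converges without any \(A_\infty\) factor. For \(r=p\) the levels balance, and the Domingo-Salazar--Lacey--Rey device applies: treat the first \(\approx\log(e+[\omega]_{A_\infty})\) levels crudely and use the exponential decay beyond them. This produces \(\log(e+[\omega]_{A_\infty})^{1/p}\). The case \(p=1\) in part (ii) is the same argument with \(\sigma\equiv1\) in the averages, with \(v\) replacing \(\sigma\) in the norm, and with the \(A_1\) bound \(\E[\omega\mid\mathcal F_\tau]\le[\omega,v]_{A_1}v\) used in place of H\"older.

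The main obstacle is the \(\omega\)-sparseness step. We must establish that the restricted families are sparse with respect to \(\mu_\omega\) with constant governed by \([\omega]_{A_\infty}\), and we have only continuous-time stopping sets to work with, not dyadic cubes. The difficulty is in making the iteration \(\omega(A\cap E_{j+m})\le\frac12\omega(A)\) for \(m\approx C[\omega]_{A_\infty}\) rigorous, including the \(\sigma\)-finiteness and conditional-expectation technicalities. Our first attempt will be to derive it from \(\int_A\sum_{j}\E[\omega\mid\mathcal F_{\tau_j}]\mathbf 1_{E_j}\dd\mu\le C[\omega]_{A_\infty}\omega(A)\), combined with the lower bound \(\E[\omega\mid\mathcal F_{\tau_j}]\ge\) the average over \(E_{j+m}\).
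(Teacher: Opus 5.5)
Your maximal-function step is fine. So is the \(\omega\)-decay of \(E_{j+m}\), in the integrated or conditional form you mention at the end (pointwise domination by \(M_{\tau_0}\omega\) fails). It follows in a few lines from \(\E_\omega\bigl[\sum_{k\ge j}\mathbf 1_{E_k}\mid\mathcal F_{\tau_j}\bigr]\le\eta^{-1}[\omega]_{A_\infty}\), which is just sparseness plus the definition of \([\omega]_{A_\infty}\), so this is not the real obstacle. The genuine gap is that your ingredients cannot produce the stated powers. Your tail bound controls \(N_k\) only relative to the support measure \(\omega(\{N_k\ge1\})\). For the support you have only the weak-type bound \(\omega(\{N_k\ge1\})\lesssim[\omega,\sigma]_{A_p}2^{kp}\lambda^{-p}\|f\|_{L^p_\sigma}^p\); localizing \(\|f\|_{L^p_\sigma}^p\) to principal stopping sets does not help when a single level dominates. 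Write \(W:=[\omega]_{A_\infty}\). Both bounds are consistent with \(N_k\simeq W\) on a support of measure \(\simeq 2^{kp}\lambda^{-p}[\omega,\sigma]_{A_p}\|f\|_{L^p_\sigma}^p\). Then for every \(k\) with \(2^{kr}\lesssim W\) you have \((2^{-k}\lambda)^rN_k\gtrsim\lambda^r\) on the whole support, and \(\sum_{2^{kr}\lesssim W}2^{kp}\simeq W^{p/r}\). So your scheme yields at best \([\omega,\sigma]_{A_p}^{1/p}[\omega]_{A_\infty}^{1/r}\) in all three regimes. That loses \([\omega]_{A_\infty}^{1/p}\) when \(r<p\), gives a power instead of \(\log(e+[\omega]_{A_\infty})^{1/p}\) when \(r=p\), and leaves an \(A_\infty\) factor when \(r>p\), where you claim none. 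Part (ii) suffers the same loss.

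The missing idea is to measure each level by its \(L^1_\omega\)-mass \(\|N_k\|_{L^1_\omega}=\sum_j\omega(D_j^k)\) rather than by its support. Here \(D_j^k\in\mathcal F_{\tau_j}\) is the set where the \(j\)-th average lies in level \(k\). You need two estimates.

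The first is a layer bound uniform in \(k\): \(\sum_j\int_{D_j^k}\E[f\sigma\mid\mathcal F_{\tau_j}]^p\,\dd\mu_\omega\lesssim_\eta[\omega,\sigma]_{A_p}\|f\|_{L^p_\sigma}^p\). Hence \(\|N_k\|_{L^1_\omega}\lesssim 2^{kp}\lambda^{-p}[\omega,\sigma]_{A_p}\|f\|_{L^p_\sigma}^p\) with no \(A_\infty\) factor. The paper proves it using the pairwise disjoint sets \(F_j:=D_j^k\setminus\bigcup_{i>j}D_i^k\). Sparseness shows \(\E[f\sigma\mathbf 1_{F_j}\mid\mathcal F_{\tau_j}]\gtrsim_\eta\E[f\sigma\mid\mathcal F_{\tau_j}]\) on \(D_j^k\), and then conditional H\"older and \(A_p\) apply on each \(F_j\). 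This needs levels whose ratio exceeds \(1-\eta\), which is why the paper uses ratio \(1-\eta/2\). Your dyadic levels break down for \(\eta\le\frac12\) unless you subdivide them.

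The second is a counting bound relative to mass: \(\|N\|_{L^s_\omega}^s\lesssim_s\eta^{1-s}W^{s-1}\|N\|_{L^1_\omega}\). It follows from your packing bound via the Tanaka--Terasawa principal lemma applied under \(\mu_\omega\). Combined with the layer bound, it saves exactly one factor \(W\) over your \(W^s\omega(\{N_k\ge1\})\).

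With these, \(r>p\) is Chebyshev in \(L^1\) plus a geometric sum. For \(r\le p\), use levels of ratio \(1-\eta/2\) and split at \(k_0\) with \((1-\eta/2)^{k_0r}\simeq W^{-1}\). Apply \(L^1\)-Chebyshev below \(k_0\) and \(L^s\)-Chebyshev with \(s=2p/r\) above it. This gives \(W^{p/r-1}\) when \(r<p\) and \(\log(e+W)\) when \(r=p\) in the level-set estimate.
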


To the best of our knowledge, Theorem \ref{thm: sparse operators weak type bound} contains the first weak-type bounds for sparse operators on continuous-time filtered measure spaces under the corresponding \(A_p\)–\(A_\infty\) assumptions. When specialized to the dyadic filtration on $\R^d$, part~\ref{item: sparse operators weak type bound p not 1} recovers \eqref{eq:thmhytonenliweak} and \eqref{eq:thmborderline}. We note that the logarithm in the case $p=r=2$ was very recently shown to be sharp by Os\k{e}kowski \cite[Theorem~1.2]{Osekowski2026}, since the dyadic square function admits pointwise sparse domination by \(\mathcal A^2_{\mathcal S}\).

The case $r\geq 1$ in Theorem \ref{thm: sparse operators weak type bound}\ref{thm:sparse-weak-endpoint} in the dyadic setting  can, for example, be found in \cite[Corollary~C]{NS26}. As shown by Lerner, Nazarov and Ombrosi \cite{LernerNazarovOmbrosi2020}, the logarithm in the case $r=1$ is sharp.
Perhaps most notably, the case $r<1$ is even new in the dyadic setting, and solves the open problem stated by Nieraeth and Stockdale after \cite[Corollary~C]{NS26}.
Taking \(r=\tfrac1{p_0}\) and rescaling, Theorem \ref{thm: sparse operators weak type bound} removes the logarithmic loss in the endpoint estimate of Frey and Nieraeth \cite[Theorem~1.4]{FN19} when \(p_0>1\) and \(q_0=\infty\).

In particular, the case \(r=\tfrac12\), combined with the pointwise sparse domination in \cite[Theorem 1.3]{GRS21}, improves the best known two weight weak-type \((2,2)\) bound for the Rubio de Francia square function. For a collection \(\mc I\) of pairwise disjoint intervals in \(\mathbb R\), define
\[
R_{\mc I}f
:=
\Bigl(
\sum_{I\in\mc I}
\bigl|\mathcal F^{-1}(\mathbf1_I\mathcal F(f))\bigr|^2
\Bigr)^{1/2},
\]
where \(\mathcal F\) denotes the Fourier transform. In the following corollary, the suprema defining the weight characteristics are taken over all intervals, not just dyadic ones.
\begin{lcorollary}[Weak-type bound for the Rubio de Francia square function]
\label{cor:rubio-weak-endpoint}
Let \(v,\omega\) be weights on \(\mathbb R\), and let
\(\mc I\) be a collection of pairwise disjoint frequency intervals.
Then
\[
\|R_{\mc I}f\|_{L^{2,\infty}_\omega}
\lesssim
[\omega,v]_{A_1}^{1/2}
[\omega]_{A_\infty}^{1/2}
\|f\|_{L^2_v}.
\]
\end{lcorollary}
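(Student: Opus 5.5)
The plan is to combine the pointwise sparse domination of Garg, Roncal and Shrivastava \cite[Theorem~1.3]{GRS21} with Theorem~\ref{thm: sparse operators weak type bound}\ref{thm:sparse-weak-endpoint} for $r=\tfrac12$, after rescaling. As I recall it, \cite[Theorem~1.3]{GRS21} gives, for suitable $f$ (say compactly supported and bounded, then extended by density and monotone convergence), at most three dyadic lattices $\mathscr D^1,\mathscr D^2,\mathscr D^3$ on $\R$ and sparse families $\mathcal S_k\subseteq\mathscr D^k$ with
\[
(R_{\mc I}f)^2(x)\lesssim\sum_{k}\sum_{Q\in\mathcal S_k}\ip{|f|^2}_{Q}\mathbf 1_{Q}(x)
\]
in a pointwise form adapted to the $L^2$ power. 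It is also possible that the form is $\bigl(\sum_{Q}\ip{|f|^{2}}_Q^{1/2}\cdots\bigr)$, with exponent $\tfrac12$ appearing after substituting $g=|f|^2$. Either way, the operator that appears is $\mathcal A^{r}_{\mathcal S}$ applied to $g:=|f|^2$ with $r=1/2$ after the right normalization. Concretely, I expect the relevant bound to be $(R_{\mc I}f)^2\lesssim \sum_k\bigl(\mathcal A^{1/2}_{\mathcal S_k}g\bigr)$, or a variant with a square root. Then
\[
\|R_{\mc I}f\|_{L^{2,\infty}_\omega}^2=\|(R_{\mc I}f)^2\|_{L^{1,\infty}_\omega}.
\]

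Next I apply Theorem~\ref{thm: sparse operators weak type bound}\ref{thm:sparse-weak-endpoint} with $r=\tfrac12$, in the dyadic filtration induced by each $\mathscr D^k$. Each sparse family corresponds to an $\eta$-sparse sequence of stopping times, as explained in the introduction. This gives
\[
\|\mathcal A^{1/2}_{\mathcal S_k}g\|_{L^{1,\infty}_\omega}\lesssim[\omega,v]_{A_1}^{\mathscr D^k}\,\bigl([\omega]_{A_\infty}^{\mathscr D^k}\bigr)^{2-1}\|g\|_{L^1_v}.
\]
Since $\|g\|_{L^1_v}=\|f\|_{L^2_v}^2$, taking square roots yields the claimed factor $[\omega,v]_{A_1}^{1/2}[\omega]_{A_\infty}^{1/2}$. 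The finitely many summands are handled by the quasi-triangle inequality in $L^{1,\infty}_\omega$.

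The remaining step is to compare the dyadic characteristics with the ones defined over all intervals. For $A_1$ this is immediate, because $M_{\mathscr D^k}\omega\le M\omega$. For $A_\infty$, the dyadic Fujii--Wilson characteristic for $\mathscr D^k$ is bounded by the interval version: for a dyadic $Q$ we have $M_{\mathscr D^k}(\mathbf 1_Q\omega)\le M(\mathbf 1_Q\omega)$, and $Q$ is itself an interval. So no loss occurs.

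The main obstacle is matching the exact form of the GRS sparse bound with $\mathcal A^{1/2}$. I need the exponent bookkeeping to produce $r=\tfrac12<1$, which is precisely the new case of Theorem~\ref{thm: sparse operators weak type bound}. I would first check that the GRS domination reads $R_{\mc I}f\lesssim\sum_k\bigl(\sum_{Q\in\mathcal S_k}\ip{|f|^{2}}_Q^{1/2}\ldots\bigr)$ and rewrite it as $\bigl(\mathcal A^{1/2}_{\mathcal S_k}(|f|^2)\bigr)^{1/2}$ pointwise. If it is instead stated with $\ip{|f|}$-type averages in a different arrangement, I would use Hölder's inequality, $\ip{|f|}_Q\le\ip{|f|^2}_Q^{1/2}$, to reduce to that form.
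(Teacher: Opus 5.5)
Your proposal is correct and follows essentially the paper's argument: dominate $R_{\mc I}f$ pointwise by $\sum_k\sum_{Q\in\mathcal S_k}\ip{|f|^2}_Q^{1/2}\mathbf 1_Q=\sum_k\bigl(\mathcal A^{1/2}_{\mathcal S_k}(|f|^2)\bigr)^{1/2}$ over three dyadic grids, then apply Theorem~\ref{thm: sparse operators weak type bound}\ref{thm:sparse-weak-endpoint} with $r=\tfrac12$ to $g=|f|^2$, and take square roots using $\|g\|_{L^1_v}=\|f\|_{L^2_v}^2$ (the paper's proof cites \cite[Theorem~A]{DFPR25} with the three lattice lemma, which gives the same form as \cite[Theorem~1.3]{GRS21}). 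The linear sum of $L^2$-averages you settle on at the end is the right domination, not your first guessed square-function form, which \cite{GRS21} does not prove; your comparison of dyadic and interval characteristics supplies a detail the paper leaves implicit.
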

In the one weight setting, Corollary \ref{cor:rubio-weak-endpoint} answers a question raised by Garg, Roncal and Shrivastava. Indeed, it removes the logarithmic loss in  \cite[Corollary 1.6]{GRS21}, yielding the sharp dependence on the weight characteristic (see \cite[Section 1.4]{DFPR25}). A similar improvement follows for, e.g., \cite[Theorem~A and Proposition 1.2]{DFPR25}.

\smallskip

Let \(M\) denote Doob's maximal operator
\[
Mf:=\esssup_{t\in\mathbb R}\E[|f|\mid\mathcal F_t].
\]
Theorems~\ref{thm: sparse operators ap ainfty bound} and \ref{thm: sparse operators weak type bound} give the following bounds for \(M\). The strong-type estimate refines the \(A_p\) bound in
\cite[Theorem~5]{domelevo_continuous_2025} to a mixed
\(A_p\)--\(A_\infty\) estimate, whereas the weak-type estimates seem new in this generality.
\begin{lcorollary}[Mixed bounds for Doob's maximal operator]
\label{cor: max op ap ainfty bound}
Let \(\sigma,\omega\) be weights. For \(1<p<\infty\),
\[
\begin{aligned}
\|M(\cdot\sigma)\|_{L^p_\sigma\to L^{p,\infty}_\omega}
&\lesssim_p [\omega,\sigma]_{A_p}^{1/p},\\
\|M(\cdot\sigma)\|_{L^p_\sigma\to L^p_\omega}
&\lesssim_p [\omega,\sigma]_{A_p}^{1/p}
[\sigma]_{A_\infty}^{1/p}.
\end{aligned}
\]
Moreover, for every weight \(v\),
\[
\|M\|_{L^1_v\to L^{1,\infty}_\omega}
\lesssim [\omega,v]_{A_1}.
\]
\end{lcorollary}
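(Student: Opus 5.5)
The plan is to dominate Doob's maximal operator pointwise by a sparse operator $\mathcal A_{\mathcal S}=\mathcal A^1_{\mathcal S}$ built from stopping times, with $\mathcal S$ depending on $f$. All three bounds then follow from Theorems~\ref{thm: sparse operators ap ainfty bound} and \ref{thm: sparse operators weak type bound} with $r=1$.

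Fix $f\ge0$, replaced by $f\sigma$ in the weighted cases. By monotone convergence I may assume $f$ is bounded with support of finite measure, so that $\E[f\mid\mathcal F_t]\to0$ as $t\to-\infty$ (or I work above a fixed time $t_0$ and let $t_0\to-\infty$). Using right-continuous versions of the martingale $t\mapsto \E[f\mid\mathcal F_t]$, I define the stopping times recursively:
\[
\tau_0:=\inf\{t\ge t_0:\E[f\mid\mathcal F_t]>0\},\qquad
\tau_{j+1}:=\inf\bigl\{t>\tau_j:\E[f\mid\mathcal F_t]>2\,\E[f\mid\mathcal F_{\tau_j}]\bigr\}.
\]
If $\tau_j\le t<\tau_{j+1}$, then $\E[f\mid\mathcal F_t]\le 2\E[f\mid\mathcal F_{\tau_j}]$. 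Taking the supremum over $t$ gives
\[
Mf\le 2\sup_j \E[f\mid\mathcal F_{\tau_j}]\mathbf 1_{E_j}\le 2\,\mathcal A_{\mathcal S}f.
\]

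Sparseness comes from Doob's weak-type inequality applied conditionally on $\mathcal F_{\tau_j}$. Let $A\in\mathcal F_{\tau_j}$ with $A\subseteq E_j$. On $A$, the event $\{\tau_{j+1}<\infty\}$ means that the martingale started at $\tau_j$ exceeds twice its initial value. Optional stopping gives
\[
\mu(A\cap E_{j+1})\le \tfrac12\mu(A)
\]
(the inequality may be non-strict, so I stop at level $>2$, or use the constant $4$ to absorb right-continuity issues). Hence $\mathcal S$ is $\tfrac12$-sparse with respect to $\mu$. For the weighted versions I apply this with $f\sigma$ in place of $f$, and the sparseness is still with respect to $\mu$, which is what the theorems require.

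The strong bound then follows from Theorem~\ref{thm: sparse operators ap ainfty bound} with $r=1$. The $\omega$-term has exponent $(1-1/p)_+$, so it is not automatically absent. To get the stated bound $[\omega,\sigma]_{A_p}^{1/p}[\sigma]_{A_\infty}^{1/p}$, I will replace the $\ell^1$ sum by a supremum, which is formally $r=\infty$. Since $Mf\le 2\sup_j(\cdots)\le 2\mathcal A^r_{\mathcal S}f$ for every $r>0$, I can choose $r>p$, for instance $r=2p$. Then $(1/r-1/p)_+=0$, and the theorem gives $[\omega,\sigma]_{A_p}^{1/p}([\sigma]_{A_\infty}^{1/p}+1)\lesssim[\omega,\sigma]_{A_p}^{1/p}[\sigma]_{A_\infty}^{1/p}$, using $[\sigma]_{A_\infty}\ge1$. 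The same choice $r>p$ in Theorem~\ref{thm: sparse operators weak type bound}\ref{item: sparse operators weak type bound p not 1} gives the weak $L^p$ bound with constant $[\omega,\sigma]_{A_p}^{1/p}$. Taking $r>1$ in Theorem~\ref{thm: sparse operators weak type bound}\ref{thm:sparse-weak-endpoint} gives the $L^1_v\to L^{1,\infty}_\omega$ bound $[\omega,v]_{A_1}$.

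The main obstacle I expect is the measure-theoretic construction in continuous time. I need right-continuous modifications so that the $\tau_j$ are genuine stopping times and $\E[f\mid\mathcal F_{\tau_j}]$ dominates the martingale at time $\tau_j$. I need to handle jumps, since the martingale may overshoot at $\tau_{j+1}$, though overshooting only helps. I also need the $\sigma$-finiteness hypotheses to justify conditional Doob. Finally, I must pass from the truncated problem, with start time $t_0$ and nice $f$, to general $f$ by monotone convergence, with constants uniform in $t_0$.
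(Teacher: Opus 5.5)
Your reduction is the same as the paper's. You dominate $M$ pointwise by $2\mathcal A^r_{\mathcal S}$ for a $\tfrac12$-sparse principal stopping sequence, then take $r\ge p$ so that the $\omega$-term drops out. The paper uses $r=p$ for the strong bound and $r=2p$ for the weak bounds; your $r=2p$ works equally well. The gap is in how you build $\mathcal S$.

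The corollary is stated on an arbitrary $\sigma$-finite filtered measure space. No right-continuity or completeness is assumed for $(\mathcal F_t)$. In that setting three things can fail:
\begin{itemize}
\item $t\mapsto\E[f\mid\mathcal F_t]$ need not have a right-continuous modification;
\item your hitting times need not be stopping times;
\item your optional-stopping sparseness argument needs the stopped process to equal $\E[f\mid\mathcal F_{\tau_j}]$, and that identification is unavailable.
\end{itemize}
For example, let $X=\{a,b,c\}$ carry counting measure, let $\mathcal F_t$ be trivial for $t\le0$ and $\mathcal F_t=2^X$ for $t>0$, and let $f=\mathbf 1_{\{a\}}$. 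Starting at $t_0<0$ gives $\tau_0=t_0$, while $\tau_1=0$ on $\{a\}$ and $\tau_1=\infty$ elsewhere. Hence $\{\tau_1\le0\}=\{a\}\notin\mathcal F_0$. The infimum is not even attained, since $\E[f\mid\mathcal F_0]=\tfrac13$.

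Replacing $(\mathcal F_t)$ by $(\mathcal F_{t+})$ is not a quick fix. It changes the stopping times and stopped $\sigma$-algebras that enter $\mathcal A^r_{\mathcal S}$ and the characteristics in Theorems~\ref{thm: sparse operators ap ainfty bound} and~\ref{thm: sparse operators weak type bound}, so it would need a separate comparison argument. Separately, $\E[f\mid\mathcal F_t]\to0$ as $t\to-\infty$ fails in general (take $\mathcal F_t=\mathcal F$ for all $t$), so only your $t_0$-truncation variant is viable.

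The missing idea is to discretize.
\begin{itemize}
\item Since $\mu$ is $\sigma$-finite, the essential supremum defining $Mg$ agrees a.e.\ with a supremum over countably many times. So it suffices to bound $M_{t_1,\ldots,t_N}g=\max_{k\le N}\E[|g|\mid\mathcal F_{t_k}]$ uniformly in $N$ and in the choice of times.
\item For finitely many times, run Lemma~\ref{lem:weighted-principal-stopping-sequence} with weight $1$ along $t_1,\ldots,t_N,t_N,\ldots$. This yields genuine stopping times with values in $\{t_1,\ldots,t_N,\infty\}$. They are $\tfrac12$-sparse by a tower-property argument and satisfy $M_{t_1,\ldots,t_N}g\le2\mathcal A^r_{\mathcal S_N}g$ for every $r>0$ (Proposition~\ref{prop max op ptw by sparse}). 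No path regularity or optional sampling is involved.
\item Theorems~\ref{thm: sparse operators ap ainfty bound} and~\ref{thm: sparse operators weak type bound} are uniform over all $\tfrac12$-sparse sequences. So one lets $N\to\infty$ by monotone convergence and continuity from below of level sets.
\end{itemize}
The paper also truncates to $g_m=g\mathbf 1_{\{\sigma\le m\}}$ (or $g\mathbf 1_{\{v\ge1/m\}}$ when $p=1$) so that $g_m\in L^p(\mu)$. Your reduction to bounded functions with finite-measure support plays the same role.

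Under the usual conditions, your continuous-time construction could likely be completed and would give a single sparse sequence dominating $Mf$ directly. In the stated generality, however, it is the discretization that makes the argument work.
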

Our continuous-time strong-type estimate builds on a sequential two weight \(T1\) theorem for positive operators. 
To state it, let us introduce some terminology.  
A \emph{stopping region} is a pair $(\tau,A)$, where $\tau$ is a stopping time and $A\in\mathcal F_\tau$. Let $\{\tau_j\}_{j=0}^\infty$ be a sparse sequence of stopping times with respect to $\mu_\sigma$, and set $E_j:=\{\tau_j<\infty\}$. A doubly indexed family \(\mathcal P=\{A_{j,m}\}_{j,m}\) is called \emph{subordinate} to \(\{\tau_j\}_{j=0}^\infty\) if $A_{j,m}\in\mathcal F_{\tau_j}$, $A_{j,m}\subseteq E_j$, and, for each fixed $j$, the sets $\{A_{j,m}\}_m$ are pairwise disjoint.
For each \(j\), the sets \(\{A_{j,m}\}_m\) select disjoint pieces of \(E_j\) that are known at time \(\tau_j\). In the dyadic setting, each piece would be a union of stopping cubes.

Let $\{T_k\}_{k=0}^\infty$ be an increasing sequence of stopping times, and let each $\lambda_k$ be a nonnegative $\mathcal F_{T_k}$-measurable function satisfying $\lambda_k\mathbf 1_{\{T_k=\infty\}}=0.$
Define
\begin{equation}\label{eq: stochastic T}
T(f):=\sum_{k=0}^\infty\lambda_k\E[f\mid\mathcal F_{T_k}],
\end{equation}
and, for a stopping time $\tau$, define the tail operator
\begin{equation*}
T_\tau(f)
:=
\sum_{k=0}^\infty
\lambda_k\E[f\mid\mathcal F_{T_k}]\mathbf 1_{\{T_k\ge\tau\}}.
\end{equation*}

\begin{ltheorem}[Sequential two weight \(T1\) theorem]
\label{thm: stoch t1 seq test}
Let \(T\) be the operator in \eqref{eq: stochastic T}. For
$1<p,q<\infty$ and weights $\sigma,\omega$, we have
\[
\|T(\cdot\sigma)\|_{L^p_\sigma\to L^q_\omega}
\simeq_{p,q}
\mathfrak T_{p,q}+\mathfrak T^*_{p,q},
\]
where $\theta\in(1,\infty]$ is defined by
\(\frac1\theta=(\frac1q-\frac1p)_+\), and
\begin{align*}
\mathfrak T_{p,q}
&:=
\sup_{\substack{\{\tau_j\}_{j=0}^\infty,\mathcal P}}
\left\|
\left\{
\frac{
\|\mathbf 1_{A_{j,m}}T_{\tau_j}(\sigma)\|_{L^q_\omega}
}{
\sigma(A_{j,m})^{1/p}
}
\right\}_{j,m}
\right\|_{\ell^\theta},
\\
\mathfrak T^*_{p,q}
&:=
\sup_{\substack{\{\upsilon_i\}_{i=0}^\infty,\mathcal Q}}
\left\|
\left\{
\frac{
\|\mathbf 1_{B_{i,m}}T_{\upsilon_i}(\omega)\|_{L^{p'}_\sigma}
}{
\omega(B_{i,m})^{1/q'}
}
\right\}_{i,m}
\right\|_{\ell^\theta}.
\end{align*}
Here \(\{\tau_j\}_{j=0}^\infty\) and \(\{\upsilon_i\}_{i=0}^\infty\) range over $\tfrac12$-sparse stopping time sequences with respect to \(\mu_\sigma\) and \(\mu_\omega\) respectively, and $\mathcal P$ and \(\mathcal Q\) range over families subordinate to the corresponding sequences.
\end{ltheorem}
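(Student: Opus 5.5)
The proof splits into necessity and sufficiency. Throughout we assume $T$ is truncated to finitely many $k$, so all quantities are finite. The bounds are independent of the truncation, and the general case follows by monotone convergence.

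\emph{Necessity.} Fix a $\tfrac12$-sparse sequence $\{\tau_j\}$ with respect to $\mu_\sigma$ and a subordinate family $\mathcal P$. For $\theta=\infty$ (that is, $p\le q$) we test $T(\cdot\sigma)$ on $f=\mathbf 1_{A_{j,m}}$. Since $A_{j,m}\in\mathcal F_{\tau_j}$, on $\{T_k\ge\tau_j\}$ we have $\E[\mathbf 1_{A_{j,m}}\sigma\mid\mathcal F_{T_k}]=\mathbf 1_{A_{j,m}}\E[\sigma\mid\mathcal F_{T_k}]$. Positivity then gives $\mathbf 1_{A_{j,m}}T_{\tau_j}(\sigma)\le T(\mathbf 1_{A_{j,m}}\sigma)$, and hence $\|\mathbf 1_{A_{j,m}}T_{\tau_j}(\sigma)\|_{L^q_\omega}\le\|T(\cdot\sigma)\|\,\sigma(A_{j,m})^{1/p}$.

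For $\theta<\infty$ we test on $f=\sum_{j,m}a_{j,m}\sigma(A_{j,m})^{-1/p}\mathbf 1_{A_{j,m}}$ with $\|a\|_{\ell^p}\le1$. Sparseness with respect to $\mu_\sigma$ gives the Carleson property $\sum_j\mathbf 1_{E_j}\lesssim$ ``bounded on average''. Through the dual Carleson embedding this yields $\|f\|_{L^p_\sigma}\lesssim\|a\|_{\ell^p}$. We then bound $T(f\sigma)$ from below, using positivity again, by each piece $\mathbf 1_{A_{j,m}}a_{j,m}\sigma(A_{j,m})^{-1/p}T_{\tau_j}(\sigma)$. To get the $\ell^\theta$ sum we introduce random signs or a square-function lower bound; for $q<p$ it is cleaner to use $\|\sum g_{j,m}\|_{L^q}\ge\|(\sum|g_{j,m}|^q)^{1/q}\|_{L^q}$, which holds since all terms are positive and $q\ge1$. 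This gives $\|\{a_{j,m}\|\mathbf 1_{A_{j,m}}T_{\tau_j}\sigma\|_{L^q_\omega}\sigma(A_{j,m})^{-1/p}\}\|_{\ell^q}\lesssim\|T\|$. Taking the supremum over $\|a\|_{\ell^p}\le1$ produces the $\ell^\theta$ norm, because $1/\theta=1/q-1/p$. The dual characteristic $\mathfrak T^*$ follows identically from the formal adjoint $T^*(g\omega)=\sum_k\E[\lambda_k g\omega\mid\mathcal F_{T_k}]$. Here $\langle T(f\sigma),g\omega\rangle=\langle f\sigma,T^*(g\omega)\rangle$, and $T^*$ has the same structure, with tails localized by the $\mathcal F_{\upsilon_i}$-measurable sets.

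\emph{Sufficiency.} We dualize, so that we must estimate $\langle T(f\sigma),g\omega\rangle=\sum_k\int\lambda_k\E[f\sigma\mid\mathcal F_{T_k}]\E[g\omega\mid\mathcal F_{T_k}]\,d\mu$, assuming without loss of generality $f,g\ge0$. We build principal (stopping) sequences for $f$ relative to $\sigma$ and for $g$ relative to $\omega$. Set $\tau_0=-\infty$, and let $\tau_{j+1}$ be the first time after $\tau_j$ at which $\E_\sigma[f\mid\mathcal F_t]>2\E_\sigma[f\mid\mathcal F_{\tau_j}]$, where $\E_\sigma$ denotes the $\sigma$-weighted conditional expectation. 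Doob's weak-type inequality for $\mu_\sigma$ gives $\tfrac12$-sparseness. The sequence $\{\upsilon_i\}$ for $g$ is built analogously. At each time $T_k$ the point lies in a ``current'' $\tau$-interval and a current $\upsilon$-interval, and we split the sum according to which of the two started later: $\tau_j\ge\upsilon_i$ or $\upsilon_i>\tau_j$.

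In the first part, $\E_\sigma[f\mid\mathcal F_{T_k}]\le2\E_\sigma[f\mid\mathcal F_{\tau_j}]$ on the relevant region. We pull this factor out and decompose the values $\E_\sigma[f\mid\mathcal F_{\tau_j}]$ dyadically: $A_{j,m}:=\{2^m<\E_\sigma[f\mid\mathcal F_{\tau_j}]\le2^{m+1}\}\cap E_j$. This family is subordinate. The remaining term is bounded by $\sum_{j,m}2^m\int\mathbf 1_{A_{j,m}}T_{\tau_j}(\sigma)\,\cdot\,(\text{$g$ restricted to times before the next $\upsilon$ stopping})\,\omega$. Hölder's inequality in $L^q_\omega$ and then in $\ell^\theta$ gives $\mathfrak T_{p,q}\,\|\{2^m\sigma(A_{j,m})^{1/p}\}\|_{\ell^p}\,\|\{\cdots\}\|_{\ell^{q'}}$. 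The first sequence is $\lesssim\|f\|_{L^p_\sigma}$ by the Carleson embedding theorem for sparse stopping times. The second needs a quasi-orthogonality estimate bounding $g$ localized between consecutive $\upsilon$-times by $\|g\|_{L^{q'}_\omega}$; this uses the mixed stopping argument of Lacey--Sawyer--Uriarte-Tuero/Hytönen adapted to filtrations. The second part is symmetric, with $\mathfrak T^*$.

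The main obstacle is this quasi-orthogonality step. In the first part $g$ is not stopped relative to the $\tau$'s, so one must justify replacing $\E[g\omega\mid\mathcal F_{T_k}]$ by a controlled quantity while keeping the sum over $(j,m)$ in $\ell^{q'}$. To handle it, I would first reduce to the case where $g$ is replaced by its conditional expectation at $\upsilon_i$, which is valid because $\E_\omega[g\mid\mathcal F_{T_k}]\le2\E_\omega[g\mid\mathcal F_{\upsilon_i}]$ in the current interval. The expression then becomes a bilinear Carleson form in the two stopping families. I would control it by the continuous-time Carleson embedding together with the disjointness over $m$, exploiting that for fixed $j$ the sets $A_{j,m}$ are disjoint, so that the $\ell^{q'}$ sum collapses to an $L^{q'}_\omega$ norm of $M_\omega g$. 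Doob's inequality then finishes the argument. The case $q<p$ additionally requires the $\ell^\theta$ Hölder step, which is precisely why the testing constants are taken in $\ell^\theta$.
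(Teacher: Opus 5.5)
Your necessity argument is correct and follows the paper. The localization $\mathbf 1_A T_\tau(\sigma)\le T(\mathbf 1_A\sigma)$ for $A\in\mathcal F_\tau$, the upper $\ell^p$ bound for normalized indicators via duality and the Carleson embedding, and self-adjointness for $\mathfrak T^*_{p,q}$ are all as in the paper. For $q<p$, your superadditivity step $(\sum_{j,m} g_{j,m})^q\ge\sum_{j,m} g_{j,m}^q$, followed by the duality $(\ell^{p/q})^*=\ell^{p/(p-q)}$, is a valid self-contained replacement for the appeal to \cite[Proposition~3.1]{hanninen_two-weight_2016}.

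The sufficiency part has a genuine gap: you pair the two regions with the wrong testing conditions. You use $\mathfrak T_{p,q}$ (pulling out $\E_\sigma[f\mid\mathcal F_{\tau_j}]$ and bounding the $k$-sum by $T_{\tau_j}(\sigma)$) in the region $\tau_j\ge\upsilon_i$, where the $f$-stopping is the later one. There, for fixed $j$ and $x$, only one $g$-interval is active, namely the one containing $\tau_j(x)$. So your third H\"older factor is $\sum_j\int_{E_j}\E_\omega[g\mid\mathcal F_{\upsilon_{i(j)}}]^{q'}\,\mathrm d\mu_\omega$. The same $g$-average is counted once for every $\tau_j$ in $[\upsilon_i,\upsilon_{i+1})$, and there is no growth to sum. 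Since $\{\tau_j\}$ is sparse only with respect to $\mu_\sigma$, this factor is not bounded by $\|g\|^{q'}_{L^{q'}_\omega}$ in general. For $g=\mathbf 1_B$ with $B\in\mathcal F_{T_0}$ there is a single $g$-interval, and the factor is $\sum_j\omega(E_j\cap B)$ against $\omega(B)$. So the claimed collapse to $\|M_\omega g\|_{L^{q'}_\omega}$ fails.

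As in the paper, $\mathfrak T_{p,q}$ must be used in the region where the $g$-stopping is the later one, $\tau_j\le\upsilon_i\le T_k<\min(\tau_{j+1},\upsilon_{i+1})$. There the $g$-part is $G_j:=\sum_i\E_\omega[g\mid\mathcal F_{\upsilon_i}]\mathbf 1_{D_{ij}}$ with $D_{ij}:=\{\tau_j\le\upsilon_i<\tau_{j+1}\}\cap\{\upsilon_i<\infty\}$. Along the indices $i$ with $x\in D_{ij}$ the averages more than double, so $G_j^{q'}\lesssim_q\sum_i\E_\omega[g\mid\mathcal F_{\upsilon_i}]^{q'}\mathbf 1_{D_{ij}}$. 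Each $\upsilon_i$ lies in exactly one $\tau$-interval, so the $D_{ij}$ are disjoint in $j$. Together with disjointness in $m$, this gives $\sum_{j,m}\|\mathbf 1_{A_{j,m}}G_j\|^{q'}_{L^{q'}_\omega}\lesssim_q\sum_i\int_{\{\upsilon_i<\infty\}}\E_\omega[g\mid\mathcal F_{\upsilon_i}]^{q'}\,\mathrm d\mu_\omega$. The Carleson embedding for the $\tfrac12$-sparse $\{\upsilon_i\}$ with respect to $\mu_\omega$ controls this. The complementary region $\upsilon_i<\tau_j$ is handled symmetrically with $\mathfrak T^*_{p,q}$. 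These two facts, doubling within a $\tau$-interval and disjointness in $j$, are exactly the quasi-orthogonality you left open.

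A secondary issue is that your principal times are continuous-time first hitting times starting from $\tau_0=-\infty$. Neither right-continuity of the filtration nor c\`adl\`ag versions of $t\mapsto\E_\sigma[f\mid\mathcal F_t]$ are assumed. So these need not be stopping times, and the bound $\E_\sigma[f\mid\mathcal F_{T_k}]\le 2\E_\sigma[f\mid\mathcal F_{\tau_j}]$ on $\{\tau_j\le T_k<\tau_{j+1}\}$ is not justified. Moreover, $\mu_\sigma$ need not be $\sigma$-finite on $\bigcap_t\mathcal F_t$, so averages at $-\infty$ are not meaningful. Since $T$ only sees the times $T_k$, stop along the discrete filtration $(\mathcal F_{T_k})_k$ instead, starting at the first $k$ with positive average, as in Lemma~\ref{lem:weighted-principal-stopping-sequence}. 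The doubling control is then built in, and sparseness follows from the conditional Chebyshev bound $\E_\sigma[\mathbf 1_{E_{j+1}}\mid\mathcal F_{\tau_j}]\le\tfrac12$.
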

We will use Theorem~\ref{thm: stoch t1 seq test} only
with \(p=q\) to prove
Theorem~\ref{thm: sparse operators ap ainfty bound}.
When \(p\le q\), and thus $\theta=\infty$, the testing constants reduce to
suprema over individual stopping regions.
When \(q<p\), they require \(\ell^\theta\) bounds
over families subordinate to sparse stopping sequences.
Moreover, the range \(p<q\) imposes a structural restriction: boundedness forces each coefficient \(\lambda_k\) to vanish on the nonatomic part of the stopped measure space \((X,\mathcal F_{T_k},\mu)\), as shown in Proposition~\ref{prop:atomicity-p-less-q}. Therefore, \(T=0\) if all these spaces are nonatomic. For \(q\le p\), nonzero operators can exist even on nonatomic filtrations.

The dyadic analogue of Theorem~\ref{thm: stoch t1 seq test} for all \(1<p,q<\infty\) is due to H\"anninen, Hyt\"onen, and Li \cite{hanninen_two-weight_2016}. Their sequential testing theorem unifies the earlier
results for \(p\le q\) due to Lacey, Sawyer, and
Uriarte-Tuero \cite{lacey_two_2010} and, in a different
formulation, for \(q<p\) due to Tanaka \cite{tanaka2014}.

In the filtered measure space setting, for \(p\le q\), Tanaka and Terasawa treated the cases \((\sigma,\omega)=(1,\omega)\) and \((\sigma,\omega)=(w,1)\) for discrete filtrations under the so-called dyadic logarithmic bounded oscillation condition (DLBO) \cite{tanaka_positive_2013}, i.e.
\begin{equation*}
\sum_{k\ge j}\E[\lambda_k\mid\mathcal F_{T_j}]\simeq\sum_{k\ge j}\lambda_k,
\end{equation*}
which is quite restrictive, see also \cite{CO2009,COV2004,COV2006}. 
Under this DLBO condition, they also characterized boundedness for \(1<q<p<\infty\) in the case \(\sigma=1\) using a discrete
Wolff potential \cite[Theorem~1.2]{tanaka_positive_2013}.
In a later paper \cite{tanaka_characterization_2013}, they treated the unweighted case \(\sigma=\omega=1\) without this condition. Moreover, Chen, Zhu, Zuo, and Jiao \cite[Theorem~1.1]{chen_two-weighted_2020} considered the case \(p\le q\), assuming that both weights belong to \(A_1\). Theorem~\ref{thm: stoch t1 seq test} removes all aforementioned additional assumptions.

\smallskip

For our weak-type estimates in Theorem \ref{thm: sparse operators weak type bound}, in the case \(1<p<\infty\) and \(r<p\), we could use the weak-type testing theorem of Chen, Zhu, Zuo, and Jiao \cite[Theorem~1.6]{chen_two-weighted_2020} on a sampled filtration. However, we prefer to give a direct proof for all \(r>0\), following ideas of Lacey and Scurry \cite[Section 2]{lacey_scurry_2012}, further developed by Domingo-Salazar, Lacey, and Rey \cite[Section~4]{DomingoSalazarLaceyRey2016}  and 
Hyt\"onen and Li \cite[Section~4]{hytonen_weak_2017}. The main ingredient we additionally add is the weighted counting estimate in Lemma~\ref{lem:weak-counting}. This is particularly useful when \(r\le p\) and in particular enables the removal of a logarithmic term when $r<p$, leading to Corollary \ref{cor:rubio-weak-endpoint}.

\subsection*{Finite-complexity dyadic sparse forms} 
Finite-complexity dyadic operators, in particular Haar shifts, are fundamental model operators in modern Calderón--Zygmund theory. The study of Haar shifts beyond the doubling setting led to the introduction of the regularity condition \cite{lopez-sanchez_dyadic_2014}, now known as balancedness. Sparse domination in related non-doubling settings was subsequently developed in \cite{conde-alonso_nondoubling_2019,volberg_sparse_2018}.
For doubling measures, the complexity of a Haar shift can typically be absorbed into the constant in the usual sparse bound. By contrast, for balanced non-doubling measures on $\R$, Conde-Alonso, Pipher, and Wagner \cite{CPW} showed that such a bound can fail for shifts of positive complexity. 
They replaced the classical form by one that allows different intervals to interact, with the complexity controlling their dyadic distance. Their result gave qualitative weighted bounds and a complexity-dependent $\Acal_p$ condition. However, the sharp dependence on the weight characteristics, i.e. the analogue of the $\Acal_2$-theorem in this setting, remained open \cite[Remark~3.4]{CPW}. Subsequent developments include endpoint estimates, commutator theory, and vector- and matrix-weighted extensions \cite{BBDPW26, BCAPW25,CAW25}.

We introduce a positive sparse form that is inspired by the construction in \cite{CPW}. It allows cubes at bounded dyadic distance to interact, but is not tied to a particular Haar shift. Let \(\mu\) be a locally finite Borel measure on \(\mathbb R^d\), and let \(\mathscr D\) be a dyadic lattice on \(\mathbb R^d\). For $Q \in \ms{D}$ we let $\ip{f}_Q^\mu$ be the average of $f$ with respect to $\mu$ and we replace the Lebesgue measure by $\mu$ in the definition of sparseness. We omit cubes of zero \(\mu\)-measure from all averages, sums, and suprema.
We write \(\distD(Q,P)\) for the smallest number of steps from \(Q\) to \(P\), moving at each step to a dyadic parent or child. A relation \(\Gamma\subseteq\mathscr D\times\mathscr D\) has \emph{complexity} at most \(\kappa\in\mathbb N_0\) if \(\distD(Q,P)\le\kappa\) for every \((Q,P)\in\Gamma\). For an $\eta$-sparse family \(\mathcal S\subseteq\mathscr D\) we define
\[
\Chat_{\mathcal S,\Gamma}(f,g)
:=
\sum_{\substack{Q,P\in\mathcal S\\(Q,P)\in\Gamma}}
\avg{f}{Q}\avg{g}{P}\,c_{Q,P},
\qquad
c_{Q,P}:=\min\{\mu(Q),\mu(P)\}.
\]
The sparse form $\Chat_{\mathcal S,\Gamma}$ allows diagonal, nested, and separated interactions. When \(\Gamma=\{(Q,Q):Q\in\mathscr D\}\), it reduces to the classical sparse form.
Note that, since \(f\) is averaged over \(Q\) and \(g\) over \(P\), the order of the pair matters.

The two weight characteristic related to the form $\Chat_{\mathcal S,\Gamma}$ is as follows. For \(1<p<\infty\) and weights \(\sigma, \omega\), we define
\[
[\omega,\sigma]_{\Acal_{p,\Gamma}(\mu)}
:=
\sup_{(Q,P)\in\Gamma}
\frac{c_{Q,P}^p}{\mu(Q)\mu(P)^{p-1}}
\avg{\omega}{P}\bigl(\avg{\sigma}{Q}\bigr)^{p-1}.
\]
For the diagonal relation, this is precisely the usual dyadic two weight \(A_p\) characteristic:
\[
[\omega,\sigma]_{A_p^{\ms{D}}(\mu)}
:=
\sup_{Q\in\mathscr D}
\avg{\omega}{Q}\bigl(\avg{\sigma}{Q}\bigr)^{p-1}.
\]
For a weight \(v\), set
\[
[v]_{\AinfD}:=
\sup_{Q\in\mathscr D}
\frac{1}{v(Q)}\int_Q\Mdyad(v\mathbf 1_Q)\,\mathrm d\mu, \qquad \Mdyad f(x):=\sup_{\substack{Q\in\mathscr D}}\avg{|f|}{Q}\mb{1}_Q.
\]

Our main theorem in this setting shows that finite-complexity interactions preserve the classical mixed \(A_p\)--\(A_\infty\) dependence. In particular, the powers of the weight characteristics are the same as for the classical sparse form, and hence as in the case \(r=1\) of Theorem~\ref{thm: sparse operators ap ainfty bound}. These powers are sharp in general, since the classical diagonal relation is a special case.
\begin{ltheorem}[Finite-complexity dyadic mixed estimate]
\label{thm: finite complexity mixed intro}
Let $0<\eta<1$, let $\mathcal S\subseteq\mathscr D$ be $\eta$-sparse, let
$\Gamma\subseteq\mathscr D\times\mathscr D$ be a relation of complexity at most $\kappa$, let $1<p<\infty$, and let \(\sigma,\omega\) be weights. Then, for
all \(f\in L^p_\sigma\) and
\(g\in L^{p'}_\omega\),
\[
\bigl|\Chat_{\mathcal S,\Gamma}(f\sigma,g\omega)\bigr|
\lesssim_{p,d,\kappa,\eta}
[\omega,\sigma]_{\Acal_{p,\Gamma}(\mu)}^{1/p}
\Bigl(
[\sigma]_{\AinfD}^{1/p}
+[\omega]_{\AinfD}^{1/p'}
\Bigr)
\|f\|_{L^p_\sigma}
\|g\|_{L^{p'}_\omega}.
\]
\end{ltheorem}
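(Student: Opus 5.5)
The plan is to reduce the finite-complexity form to a bounded number of ``shifted'' diagonal-type sparse forms, and then run the classical Hytönen--Li / Lacey--Li argument for the mixed $A_p$--$A_\infty$ bound on each of them.

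\emph{Decomposition.} For $(Q,P)\in\Gamma$, with $\distD(Q,P)\le\kappa$, let $R=Q\vee P$ be the minimal common dyadic ancestor. Then $Q=R^{(a)}$ and $P=R^{(b)}$ are descendants of $R$ of generations $a,b\le\kappa$. Split $\Gamma$ into at most $(\kappa+1)^2$ subrelations $\Gamma_{a,b}$ according to $(a,b)$. Within each $\Gamma_{a,b}$, every $Q$ relates to at most $2^{d\kappa}$ cubes $P$ and vice versa. So each $\Gamma_{a,b}$ splits further into $\lesssim_{d,\kappa}1$ relations that are graphs of injective partial maps $Q\mapsto\phi(Q)=P$. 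It therefore suffices to bound
\[
\Lambda_\phi(f,g):=\sum_{Q\in\mathcal S,\ \phi(Q)\in\mathcal S}\avg{f\sigma}{Q}\avg{g\omega}{\phi(Q)}\,c_{Q,\phi(Q)}.
\]

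\emph{Reduction to two one-sided pieces.} Write $c=c_{Q,P}$ and use
\[
c\le c^{1/p}\,c^{1/p'} =\Bigl(\tfrac{c^p}{\mu(Q)\mu(P)^{p-1}}\Bigr)^{1/p}\mu(Q)^{1/p}\mu(P)^{1/p'}.
\]
The bracketed factor multiplied by $\avg{\omega}{P}\avg{\sigma}{Q}^{p-1}$ is at most $[\omega,\sigma]_{\Acal_{p,\Gamma}(\mu)}$. Hence
\[
\avg{f\sigma}{Q}\avg{g\omega}{P}\,c \le [\omega,\sigma]^{1/p}_{\Acal_{p,\Gamma}}
\Bigl(\frac{\avg{f\sigma}{Q}}{\avg{\sigma}{Q}}\,\avg{\sigma}{Q}^{1/p'}\mu(Q)^{1/p}\Bigr)
\Bigl(\frac{\avg{g\omega}{P}}{\avg{\omega}{P}}\,\avg{\omega}{P}^{1/p}\mu(P)^{1/p'}\Bigr)\avg{\sigma}{Q}^{1/p}\avg{\omega}{P}^{1/p'}.
\]
This is the same algebra as in the classical proof, now with $Q$ and $P$ decoupled. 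Following Hytönen--Li, the next step is the stopping-time (principal cubes) argument. Construct $\sigma$-principal cubes $\mathcal F$ for $f$ (stopping when $\langle f\rangle^\sigma$ doubles) and $\omega$-principal cubes $\mathcal G$ for $g$. Assign each pair $(Q,P)$ to $(\pi_{\mathcal F}Q,\pi_{\mathcal G}P)$, and split into the case where $\pi_{\mathcal F}Q$ is ``later'' than $\pi_{\mathcal G}P$ and the reverse case. Since $Q$ and $P$ share the ancestor $R$ at distance $\le\kappa$, one can compare $\pi_{\mathcal F}Q$ with the principal cube of $R$ up to $\kappa$ generations. This is where complexity enters as a constant.

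\emph{Key estimate.} In the case where $\mathcal F$ stops later, fix $F\in\mathcal F$ and sum over $Q$ with $\pi_{\mathcal F}Q=F$. We need to control
\[
\sum_{Q:\pi Q=F}\avg{\sigma}{Q}\mu(Q)\cdot\avg{|g|\omega}{\phi(Q)}\frac{c}{\mu(Q)}
\]
by a localized maximal function of $g$ against $\sigma$. The tool is the sparse $A_\infty$ Carleson estimate $\sum_{Q\subseteq F,\,Q\in\mathcal S}\sigma(Q)\lesssim_\eta[\sigma]_{\AinfD}\sigma(F)$. The same estimate must also be applied to the shifted family $\{\phi(Q)\}$: it is again $\eta'$-sparse with respect to $\mu$ and each cube is hit boundedly often, since it lies in $\mathcal S$ and $\phi$ is injective. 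Then Hölder with the exponents $1/p,1/p'$ distributed as above gives $[\sigma]_{\AinfD}^{1/p}\|f\|_{L^p_\sigma}\|g\|_{L^{p'}_\omega}$ via the $\sigma$-Carleson embedding, and symmetrically $[\omega]_{\AinfD}^{1/p'}$ in the other case.

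\emph{Main obstacle.} I expect the main difficulty in the key step to be the non-doubling mismatch between $\mu(Q)$ and $\mu(P)$. The Carleson estimate for $\sigma$ sits on $Q$, while $g$ is averaged on $P$, which may lie outside $Q$ (a sibling-type cube under $R$). So $\avg{g\omega}{P}$ cannot simply be bounded by the $\omega$-stopping value at $\pi_{\mathcal F}Q$. I would handle this by stopping on the common ancestor $R$ instead: define the principal-cube parents through $R$. Since $Q,P\subseteq R$, both averages are then dominated by the stopping values at the principal cube of $R$. The factor $c=\min\{\mu(Q),\mu(P)\}$, built into the $\Acal_{p,\Gamma}$ characteristic, absorbs the measure discrepancy exactly as in the Hölder splitting above, so no doubling is needed.
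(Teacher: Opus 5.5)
There is a genuine gap, and it sits exactly at the obstacle you flag.

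Your remedy, running the stopping construction through the common ancestors $R=H_Q$, does not work. Principal cubes only control averages over the cubes on which one stops. For $Q\subsetneq R$, the average $\avgsig{f}{Q}$ is not controlled by $\avgsig{f}{R}$: for $f=\mathbf 1_Q$ the ratio is $\sigma(R)/\sigma(Q)$, and no doubling is available, even when $\kappa=1$.

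If you instead stop on the cubes $Q$ and on the cubes $P$, you are back to principal cubes $\pi_{\mathcal F}(Q)$ and $\pi_{\mathcal G}(P)$ that need not be nested. Knowing that both lie within $\kappa$ generations of $R$ gives no control of averages. What is needed is a combinatorial statement of the type of Lemma~\ref{lem:stopping-geometry}:
\begin{itemize}
\item split the pairs according to whether $\pi_{\mathcal F}(Q)\subsetneq\pi_{\mathcal G}(P_Q)$;
\item show that, in each part, every principal cube of one kind is paired with boundedly many principal cubes of the other kind.
\end{itemize}
One then bounds $\langle g\rangle^\omega_{P_Q}\le 2\langle g\rangle^\omega_{\pi_{\mathcal G}(P_Q)}$ via auxiliary functions $g_F=\sum_{G\in\mathcal G_F}\langle g\rangle^\omega_G\mathbf 1_G$ satisfying $\sum_F\|g_F\|_{L^{p'}_\omega}^{p'}\lesssim\|g\|_{L^{p'}_\omega}^{p'}$. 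This reduces the form to a local testing inequality, not to averages at a common stopping cube.

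The second, more serious gap is that this testing inequality,
$\bigl\|\sum_{Q\subseteq F}\frac{c_{Q,P_Q}}{\mu(P_Q)}\avg{\sigma}{Q}\mathbf 1_{P_Q}\bigr\|_{L^p_\omega}\lesssim[\omega,\sigma]_{\Acal_{p,\Gamma}(\mu)}^{1/p}[\sigma]_{\AinfD}^{1/p}\sigma(F)^{1/p}$,
is never proved. The $A_\infty$ packing $\sum_{Q\subseteq F}\sigma(Q)\lesssim[\sigma]_{\AinfD}\sigma(F)$ is only the last step. Before it, you must control the overlaps of the shifted cubes $P_Q$ in $L^p_\omega$. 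The paper does this with a chain expansion for $p\le 2$ or the principal lemma for $p>2$, combined with the packing $\sum_{P_Q\subseteq P_K}\widetilde\omega_Q^{\gamma}\rho_Q\lesssim\widetilde\omega_K^{\gamma}\rho_K$, where $\rho_Q=\max\{\mu(Q),\mu(P_Q)\}$ (Lemma~\ref{lem:branch-packing}).

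For an arbitrary injective relation of finite complexity this packing fails when $\mu$ is non-doubling. A cube $Q$ whose partner $P_Q$ lies a few generations inside $P_K$ may itself stick out of $P_K$ and satisfy $\mu(Q)\gg\rho_K$. The paper rules this out by additionally splitting according to the generations of $Q$ and $P$ modulo $\kappa+1$, which yields the nesting condition \eqref{eq:branch-nesting}. Your splitting by the offsets $(a,b)$ does not give this: a cube and its parent can lie in the same class with $a\ge 2$, and then $H_Q\not\subseteq Q^{(1)}$.

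Finally, your displayed H\"older splitting is off. Its right-hand side equals $[\omega,\sigma]^{1/p}_{\Acal_{p,\Gamma}(\mu)}\avg{f\sigma}{Q}\avg{g\omega}{P}\mu(Q)^{1/p}\mu(P)^{1/p'}$, with no gain from the characteristic. The correct bound is
$c_{Q,P}\avg{f\sigma}{Q}\avg{g\omega}{P}\le[\omega,\sigma]^{1/p}_{\Acal_{p,\Gamma}(\mu)}\avgsig{f}{Q}\sigma(Q)^{1/p}\langle g\rangle^\omega_P\,\omega(P)^{1/p'}$.
Used globally with H\"older and two Carleson embeddings, it only yields the product $[\sigma]_{\AinfD}^{1/p}[\omega]_{\AinfD}^{1/p'}$, not the sum.
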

Theorem \ref{thm: finite complexity mixed intro} does not require \(\mu\) to be balanced or the weights \(\sigma\) and \(\omega\) to be related. 
Still, if the relation \(\Gamma\) contains all diagonal pairs, setting \(\sigma=\omega^{1-p'}\) and estimating the $A_\infty$ characteristics by the $\Acal_{p,\Gamma}(\mu)$ characteristic gives the corresponding one weight estimate, with the familiar power \(\max\{1,1/(p-1)\}\), see Subsection~\ref{sec:one-weight-diagonal-consequence}.
For the diagonal relation and Lebesgue measure, the theorem reduces to the case \(r=1\) of \eqref{eq:thmhytonenli}.

The proof works by splitting the relation into finitely many injective branches. On each branch, a stopping-time argument gives a testing estimate. The $\Acal_{p,\Gamma}(\mu)$-characteristic bounds the individual terms, while packing estimates bound their sum. Balancedness enters when we compare our form with the \cite{CPW} form, which we will describe next.

\subsection*{Application to balanced non-doubling Haar shifts}
For the application to Haar shifts as in \cite{CPW}, we now specialize to \(d=1\). Set
\[
m_\mu(I):=\frac{\mu(I_-)\mu(I_+)}{\mu(I)}.
\]

An atomless measure \(\mu\) is called balanced if \(m_\mu(I)\simeq m_\mu(I^{(1)})\) uniformly over dyadic intervals, where \(I^{(1)}\) is the dyadic parent of \(I\). In \cite{CPW} it was shown that a Haar shift of total complexity at most \(N\) is dominated by two sparse forms: the classical diagonal form and a separated form that pairs intervals at dyadic distance at most \(N+2\). When \(\mu\) is balanced, the coefficient in the separated form is comparable to \(c_{I,J}\). Moreover, our $\Acal_{p,\Gamma}(\mu)$-characteristic is bounded by the weight characteristic from \cite{CPW} defined below. We prove these two comparisons in Section~\ref{sec:cpw-comparison}.
For \(1<p<\infty\), put \(\sigma=\omega^{1-p'}\) and define the
complexity-dependent characteristic
\begin{equation*}
[\omega]_{A_p^N(\mu)}
:=
\max\Biggl\{
\sup_{I\in\mathscr D}
\avg{\omega}{I}\bigl(\avg{\sigma}{I}\bigr)^{p-1},
\sup_{\substack{I,J\in\mathscr D,\ I\neq J\\
\distD(I,J)\le N+2}}
\frac{m_\mu(I)^{p/2}m_\mu(J)^{p/2}}
{\mu(J)\mu(I)^{p-1}}
\avg{\omega}{I}\bigl(\avg{\sigma}{J}\bigr)^{p-1}
\Biggr\}.
\end{equation*}
Together with the mentioned sparse domination theorem, these comparisons allow us to pass from our dyadic estimate to the following bound for Haar shifts.

\begin{lcorollary}[Sharp mixed bounds for Haar shifts]
\label{cor:cpw-sharp-quantitative}
Let \(N\in\mathbb N_0\), \(1<p<\infty\), let \(\mu\) be balanced and let \(T\) be a Haar shift with coefficients bounded by \(1\) and
total complexity at most \(N\). Let \(\omega\) be a weight and set \(\sigma=\omega^{1-p'}\). Then
\[
\|T\|_{L^p_\omega\to L^p_\omega}
\lesssim_{p,N,\mu}
[\omega]_{A_p^N(\mu)}^{1/p}
\left(
[\sigma]_{\AinfD}^{1/p}
+[\omega]_{\AinfD}^{1/p'}
\right),
\]
and, in particular,
\[
\|T\|_{L^p_\omega\to L^p_\omega}
\lesssim_{p,N,\mu}
[\omega]_{A_p^N(\mu)}^{\max\{1,1/(p-1)\}}.
\]
\end{lcorollary}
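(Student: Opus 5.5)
The plan is to combine three ingredients: the sparse domination of Haar shifts from \cite{CPW}, the two comparisons announced for Section~\ref{sec:cpw-comparison}, and Theorem~\ref{thm: finite complexity mixed intro}. The second display will then follow from the first by bounding the $A_\infty$ characteristics by $[\omega]_{A_p^N(\mu)}$.

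\emph{Reduction to sparse forms.} By duality it suffices to bound $|\langle T f,g\rangle_\mu|$ for $f\in L^p_\omega$ and $g\in L^{p'}_{\sigma}$, where $\sigma=\omega^{1-p'}$. Substituting $f=\tilde f\sigma$ and $g=\tilde g\omega$, we have $\|f\|_{L^p_\omega}=\|\tilde f\|_{L^p_\sigma}$ and similarly for $g$. This reduces matters to the two weight bound $\|T(\cdot\sigma)\|_{L^p_\sigma\to L^p_\omega}$, tested against $\tilde g\omega$.

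The sparse domination theorem of \cite{CPW} gives, for suitable $f,g$, a sparse family $\mathcal S$ (sparseness depending only on absolute constants) such that
\[
|\langle Tf,g\rangle_\mu|\lesssim_N \sum_{I\in\mathcal S}\avg{|f|}{I}\avg{|g|}{I}\mu(I)+\sum_{\substack{I,J\in\mathcal S\\ 0<\distD(I,J)\le N+2}}\avg{|f|}{I}\avg{|g|}{J}\,\frac{m_\mu(I)^{1/2}m_\mu(J)^{1/2}}{\cdots}\cdots,
\]
where the separated coefficient is whatever appears in \cite{CPW}. By the first comparison from Section~\ref{sec:cpw-comparison}, balancedness makes that coefficient comparable to $c_{I,J}$, with constants depending on $\mu$ and $N$. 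Let $\Gamma$ consist of the diagonal pairs together with the pairs at dyadic distance at most $N+2$. Then $\Gamma$ has complexity $\kappa=N+2$, and the whole right-hand side is $\lesssim_{N,\mu}\Chat_{\mathcal S,\Gamma}(|f|,|g|)$. Some care is needed with the pair orientation, which determines which function is averaged on which interval; if necessary, symmetrize $\Gamma$ by adding the reversed pairs, which keeps the complexity bounded.

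\emph{Applying the dyadic theorem.} Theorem~\ref{thm: finite complexity mixed intro}, applied to $(|\tilde f|,|\tilde g|)$ with this $\Gamma$, yields the factor
\[
[\omega,\sigma]_{\Acal_{p,\Gamma}(\mu)}^{1/p}\Bigl([\sigma]_{\AinfD}^{1/p}+[\omega]_{\AinfD}^{1/p'}\Bigr).
\]
The second comparison gives $[\omega,\sigma]_{\Acal_{p,\Gamma}(\mu)}\lesssim_{N,\mu}[\omega]_{A_p^N(\mu)}$. On diagonal pairs this is immediate, since $c_{Q,Q}=\mu(Q)$. On off-diagonal pairs one uses $m_\mu(I)\simeq m_\mu(J)$, which holds by balancedness along the path of length at most $N+2$. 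One also needs a comparison of $\min\{\mu(I),\mu(J)\}$ with $m_\mu$ that converts $c^p/(\mu(Q)\mu(P)^{p-1})$ into the $A_p^N$ expression.

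I expect this to be the main obstacle. Balancedness controls $m_\mu$ but not $\mu$ itself, so I would first check the separated domination coefficient of \cite{CPW}. Then I would show that $c_{I,J}$ dominates it from above (needed for domination) while the $A_p^N$ term dominates the $\Acal_{p,\Gamma}$ term. If the two requirements conflict, I would instead run the proof of Theorem~\ref{thm: finite complexity mixed intro} with the CPW coefficient in place of $c_{Q,P}$.

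\emph{The power $\max\{1,1/(p-1)\}$.} Using the diagonal part of $[\omega]_{A_p^N(\mu)}$, the standard estimate gives $[\omega]_{\AinfD}\lesssim[\omega]_{A_p^{\ms D}(\mu)}$ and $[\sigma]_{\AinfD}\lesssim[\omega]_{A_p^{\ms D}(\mu)}^{p'-1}$, as in Subsection~\ref{sec:one-weight-diagonal-consequence}. Substituting gives exponents $\frac1p+\frac{1}{p(p-1)}=\frac1{p-1}$ and $\frac1p+\frac1{p'}=1$. Their maximum is $\max\{1,1/(p-1)\}$, which yields the second display.
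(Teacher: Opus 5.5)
Your architecture is the same as the paper's: sparse domination from \cite{CPW}, a coefficient comparison, Theorem~\ref{thm: finite complexity mixed intro}, a comparison of characteristics, and then $A_\infty\lesssim A_p$ for the second display. That last step is fine. The gap is the step you flag yourself: you never establish $[\omega,\sigma]_{\Acal_{p,\Gamma}(\mu)}\lesssim[\omega]_{A_p^N(\mu)}$. Moreover, with your $\Gamma$ (all pairs at distance at most $N+2$, including nested ones), the mechanism you describe fails. The $\Acal_{p,\Gamma}$ term at $(Q,P)$ equals $\omega(P)\sigma(Q)^{p-1}/\max\{\mu(Q),\mu(P)\}^p$. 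The $A_p^N$ term indexed by $(P,Q)$ equals $(m_\mu(Q)m_\mu(P))^{p/2}\omega(P)\sigma(Q)^{p-1}/(\mu(Q)\mu(P))^p$. A term-by-term comparison therefore needs $c_{Q,P}\lesssim\sqrt{m_\mu(Q)m_\mu(P)}$. For a parent--child pair $(I,J)$ with $J$ the heavier child, this reads $\mu(I)\lesssim\min\{\mu(I_-),\mu(I_+)\}$, which is dyadic doubling, and balanced measures need not satisfy it. Your fallback of rerunning Theorem~\ref{thm: finite complexity mixed intro} with the CPW coefficient is a contingency, not a proof.

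Two ideas are missing. First, the separated form in \cite{CPW} only involves disjoint pairs with $2<\distD(I,J)\le N+2$, with coefficient $\sqrt{m_\mu(I)m_\mu(J)}$. So take $\Gamma_N$ to be the diagonal together with these pairs. Domination then only needs $\sqrt{m_\mu(I)m_\mu(J)}\lesssim c_{I,J}$, which follows from $m_\mu(I)\simeq_{N,\beta}m_\mu(J)$ and $m_\mu(K)\le\mu(K)/4$. Second, for disjoint $I,J$ with minimal common ancestor $H$, the intervals lie in distinct children $I_0\neq J_0$ of $H$, so
\[
c_{I,J}\le\min\{\mu(I_0),\mu(J_0)\}\le 2\,\frac{\mu(I_0)\mu(J_0)}{\mu(H)}=2\,m_\mu(H)\simeq_{N,\beta}\sqrt{m_\mu(I)m_\mu(J)}.
\]
This bounds the $\Acal_{p,\Gamma_N}$ term at $(I,J)$ by the $A_p^N$ term indexed by $(J,I)$. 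Note the swap of coordinates, which is legitimate because the separated relation is symmetric. If you want to keep nested and sibling pairs in $\Gamma$, bound those terms differently. For them $\max\{\mu(Q),\mu(P)\}\ge\mu(H)/2$, so the formula above shows each such term is at most $2^p\avg{\omega}{H}(\avg{\sigma}{H})^{p-1}$, the diagonal term at the minimal common ancestor $H$.
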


Corollary~\ref{cor:cpw-sharp-quantitative} can be seen as the $A_2$-theorem for Haar shifts in the balanced setting and answers the question posed in \cite[Remark~3.4]{CPW}.
The optimal dependence on \(N\) remains open.

\subsection*{Organization of the paper}
In Section~\ref{sec:preliminaries}, we collect the preliminaries for the continuous-time setting. We prove Theorem~\ref{thm: stoch t1 seq test} in Section~\ref{sec:continuous-time-t1}.
Sections~\ref{sec:sparse-testing} and~\ref{sec:testing-to-characteristics} contain the testing reduction and the estimates for the testing constants, completing the proof of Theorem~\ref{thm: sparse operators ap ainfty bound}.
In Section~\ref{sec:weak-type}, we give a proof of Theorem~\ref{thm: sparse operators weak type bound}. Section~\ref{sec:applications} contains the applications of Theorems~\ref{thm: sparse operators ap ainfty bound} and \ref{thm: sparse operators weak type bound} to the Rubio de Francia square function and Doob's maximal operator, i.e. Corollaries \ref{cor:rubio-weak-endpoint} and \ref{cor: max op ap ainfty bound}.

We turn to the finite-complexity dyadic setting in Section~\ref{sec:finite-complexity}, where we establish a branch decomposition and the testing theorem on a branch.
In Section~\ref{sec:finite-complexity-characteristics}, we prove the mixed estimates and deduce Theorem~\ref{thm: finite complexity mixed intro}.
Finally, in Section~\ref{sec:cpw-comparison}, we compare our sparse
form and weight characteristic with those of \cite{CPW} and prove
Corollary~\ref{cor:cpw-sharp-quantitative} for Haar shifts over
balanced measures.
\section{Continuous-time preliminaries}\label{sec:preliminaries}

\subsection{Weighted conditional expectations and sequential maximal operators}
In Sections~\ref{sec:preliminaries}--\ref{sec:weak-type},
we work on a \(\sigma\)-finite filtered measure space
\[
\bigl(X,\mathcal F,(\mathcal F_t)_{t\in\mathbb R},\mu\bigr).
\]
A weight is a measurable function \(\omega\colon X \to (0,\infty)\) such that the measure
\(
\mathrm d\mu_\omega:=\omega\,\mathrm d\mu
\)
is \(\sigma\)-finite on every \(\mathcal F_t\). We write \(L^p_\omega:=L^p(X,\mu_\omega)\). {For a weight \(\sigma\) and a measurable set \(A\) we write \(\sigma(A):=\mu_\sigma(A)\).} For every stopping time \(\tau \colon X \to \R\cup \cbrace{\infty}\), both \(\mu|_{\mathcal F_\tau}\) and \(\mu_\omega|_{\mathcal F_\tau}\) are \(\sigma\)-finite. If \(\tau_1\) and \(\tau_2\) are stopping times and \(A\in\mathcal F_{\tau_1}\), then we will often use the fact that
\begin{equation}\label{eq:stopped-sigma-algebra-facts}
A\cap\{\tau_1\le \tau_2\},\,A\cap\{\tau_1<\tau_2\}\in\mathcal F_{\tau_2},
\end{equation}
see \cite[Theorem 3.1.13 and 6.1.4]{CohenElliott2015}. Moreover, if \(f\) is \(\mathcal F_{\tau_1}\)-measurable, then \(f\mathbf 1_{\{\tau_1\le\tau_2\}}\) is \(\mathcal F_{\tau_2}\)-measurable, as can be checked by considering
level sets. Throughout, we extend every sequence \(\{\tau_j\}_{j\ge0}\) of stopping times by the convention \(\tau_\infty:=\infty\).

Let \(\mathcal G\subset\mathcal F\) be a sub-\(\sigma\)-algebra such that
\(\mu|_{\mathcal G}\) and \(\mu_\omega|_{\mathcal G}\) are
\(\sigma\)-finite. We denote conditional expectation with respect to
\(\mu_\omega\) by \(\E_\omega[\cdot\mid\mathcal G]\). For every nonnegative
measurable function \(f\), the identity
\begin{equation}\label{eq: weighted cond exp identity}
\E[f\omega\mid\mathcal G]
=
\E_\omega[f\mid\mathcal G]\E[\omega\mid\mathcal G]
\end{equation}
 holds \(\mu\)-almost everywhere. 




Let $\mathcal S=\{\tau_j\}_{j=0}^\infty$ be an increasing sequence of
stopping times, and set $E_j:=\{\tau_j<\infty\}$. 
For a weight $\omega$, define
\begin{equation*}
M_{\omega,\mathcal S}f
:=
\sup_{j\ge0}\mathbf 1_{E_j}\E_\omega[|f|\mid\mathcal F_{\tau_j}].
\end{equation*}
\begin{proposition}[Sequential weighted Doob inequality]\label{prop: sequential weighted doob}
Let $\omega$ be a weight, let $p\in(1,\infty)$, and let
$\mathcal S=\{\tau_j\}_{j=0}^\infty$ be an increasing sequence of stopping
times. Then, for every $f\in L^p_\omega$,
\begin{equation*}
\|M_{\omega,\mathcal S} f\|_{L^p_\omega}\le p'\|f\|_{L^p_\omega}.
\end{equation*}
\end{proposition}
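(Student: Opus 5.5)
The plan is to view $(\E_\omega[|f|\mid\mathcal F_{\tau_j}]\mathbf 1_{E_j})_{j\ge0}$ as a discrete-time martingale on the measure space $(X,\mathcal F,\mu_\omega)$ and apply Doob's maximal inequality with constant $p'$. By monotone convergence it suffices to bound $\sup_{0\le j\le J}$ for each finite $J$, with a constant independent of $J$. We may also assume $f\ge0$.

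\emph{Step 1: the discrete martingale.} Set $\mathcal G_j:=\mathcal F_{\tau_j}$. These $\sigma$-algebras increase because the $\tau_j$ increase. Since $\mu_\omega|_{\mathcal F_{\tau_j}}$ is $\sigma$-finite, the conditional expectations $g_j:=\E_\omega[f\mid\mathcal G_j]$ are well defined. The tower property gives $\E_\omega[g_{j+1}\mid\mathcal G_j]=g_j$, so $(g_j)$ is a nonnegative martingale for $\mu_\omega$ with respect to $(\mathcal G_j)$. The indicators $\mathbf 1_{E_j}$ only decrease the supremum, so $M_{\omega,\mathcal S}f\le\sup_j g_j$ pointwise.

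\emph{Step 2: weak-type estimate.} For $\lambda>0$, let $\rho:=\min\{j\le J: g_j>\lambda\}$, with $\rho=\infty$ if no such $j$ exists. Then $\{\rho=j\}\in\mathcal G_j$, and these sets are disjoint with union $\{\max_{j\le J}g_j>\lambda\}$. Hence
\[
\lambda\,\mu_\omega\bigl(\max_{j\le J} g_j>\lambda\bigr)\le\sum_{j\le J}\int_{\{\rho=j\}}g_j\,\mathrm d\mu_\omega=\int_{\{\max_{j\le J} g_j>\lambda\}}f\,\mathrm d\mu_\omega .
\]
The last equality uses the defining property of $\E_\omega[\cdot\mid\mathcal G_j]$ on sets of $\mathcal G_j$, together with $\sigma$-finiteness to cover the case $\mu_\omega(\{\rho=j\})=\infty$: exhaust by finite-measure sets in $\mathcal G_j$ and use monotone convergence, since $f\ge0$.

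\emph{Step 3: layer-cake.} Write $g^*_J:=\max_{j\le J}g_j$. Then
\[
\|g^*_J\|_{L^p_\omega}^p=p\int_0^\infty\lambda^{p-1}\mu_\omega(g^*_J>\lambda)\,\mathrm d\lambda\le p\int_0^\infty\lambda^{p-2}\int_{\{g^*_J>\lambda\}} f\,\mathrm d\mu_\omega\,\mathrm d\lambda ,
\]
and Fubini turns the right-hand side into $p'\int f\,(g^*_J)^{p-1}\,\mathrm d\mu_\omega$. Hölder's inequality then bounds this by $p'\|f\|_{L^p_\omega}\|g^*_J\|_{L^p_\omega}^{p-1}$.

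\emph{Step 4: the main obstacle.} The only delicate point is dividing by $\|g^*_J\|_{L^p_\omega}^{p-1}$, which requires this quantity to be finite. Each $g_j$ is a conditional expectation of an $L^p_\omega$ function, so $\|g_j\|_{L^p_\omega}\le\|f\|_{L^p_\omega}$ by conditional Jensen's inequality. Conditional Jensen is valid for $\sigma$-finite measures, which is exactly why the $\sigma$-finiteness of $\mu_\omega|_{\mathcal F_{\tau_j}}$ is needed. It follows that $g^*_J\le\sum_{j\le J}g_j$ lies in $L^p_\omega$.

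Dividing gives $\|g^*_J\|_{L^p_\omega}\le p'\|f\|_{L^p_\omega}$. Letting $J\to\infty$ by monotone convergence yields the claim.
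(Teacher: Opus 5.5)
Your proof is correct and follows the same route as the paper. The paper also applies Doob's $L^p$ maximal inequality under $\mu_\omega$ to the martingale $\E_\omega[|f|\mid\mathcal F_{\tau_j}]$ on the sampled filtration $(\mathcal F_{\tau_j})_{j\ge0}$, noting that the indicators $\mathbf 1_{E_j}$ can only decrease the supremum. The only difference is that the paper cites Doob's inequality for $\sigma$-finite filtered measure spaces, whereas you re-derive it from the stopping-time weak-type bound, the layer-cake formula and H\"older's inequality.
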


\begin{proof}
Since $\tau_j\le \tau_{j+1}$, the \(\sigma\)-algebras
$\mathcal F_{\tau_j}$ form a filtration. Apply Doob's $L^p$ maximal inequality on the countable filtered measure space $(X,\mathcal F,(\mathcal F_{\tau_j})_{j\ge0},\mu_\omega)$
\cite[Theorem~3.2]{riederer_refined_2019} to the martingale $\E_\omega[|f|\mid\mathcal F_{\tau_j}]$. The indicators $\mathbf 1_{E_j}$ can only decrease the supremum.
\end{proof}

\subsection{Sparse stopping and Carleson embedding}
The following lemma is the probabilistic version of the well-known principal stopping cube lemma. It selects principal stopping times from a sequence of martingale averages. Between two consecutive principal indices, each average is at most twice the current principal average. Let us note that $\eta$-sparseness with respect to a measure $\nu$ can be equivalently characterized by 
\begin{equation}\label{eq: sparse property prelim}
\E_\nu[
\mathbf 1_{E_j\setminus E_{j+1}}
\mid\mathcal F_{\tau_j}]
\ge
\eta\mathbf 1_{E_j},
\end{equation}
cf. \cite[Section 1]{CondeAlonsoLoristRey2026}.

\begin{lemma}
\label{lem:weighted-principal-stopping-sequence}
Let $\{T_k\}_{k=0}^\infty$ be an increasing sequence of stopping times, let $\sigma$ be a weight, {let \(1\le p<\infty\)} and let $f\in L^{{p}}_\sigma$ be nonnegative. Write
\(
f_k:=\E_\sigma[f\mid\mathcal F_{T_k}],
\)
and define principal indices by
\[
N_0:=\inf\{k\ge0:f_k>0\},
\qquad
N_{j+1}:=
\begin{cases}
\inf\{k>N_j:f_k>2f_{N_j}\}, & N_j<\infty\\
\infty, & N_j=\infty
\end{cases},
\]
which is a stopping time for the discrete
filtration $\{\mathcal F_{T_k}\}_{k=0}^\infty$. Then
\begin{align*}
 f_k&\le2f_{N_j}
&&\text{on }\{N_j\le k<N_{j+1}\},
\end{align*}
Moreover, $\{T_{N_j}\}_{j=0}^\infty$ is a $\tfrac12$-sparse sequence of stopping times with respect to
$\mu_\sigma$.
\end{lemma}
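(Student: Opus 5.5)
The plan is to verify three things in turn: that each $N_j$ is a stopping time for the discrete filtration $\{\mathcal F_{T_k}\}_k$; that the oscillation bound $f_k\le 2f_{N_j}$ holds between consecutive principal indices; and that the stopping times $T_{N_j}$ are $\tfrac12$-sparse with respect to $\mu_\sigma$. Measurability goes by induction on $j$. Since $\{N_0\le k\}=\bigcup_{i\le k}\{f_i>0\}\in\mathcal F_{T_k}$, the index $N_0$ is a stopping time. Assuming $N_j$ is one, $f_{N_j}\mathbf 1_{\{N_j\le i\}}$ is $\mathcal F_{T_i}$-measurable, and hence
\[
\{N_{j+1}\le k\}=\bigcup_{i\le k}\{N_j<i\}\cap\{f_i>2f_{N_j}\}\in\mathcal F_{T_k}.
\]
Then $T_{N_j}:=\sum_k T_k\mathbf 1_{\{N_j=k\}}+\infty\cdot\mathbf 1_{\{N_j=\infty\}}$ is a stopping time for $(\mathcal F_t)$, and the $T_{N_j}$ are increasing because the $N_j$ are increasing. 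One caveat: on $\{T_k=T_{k+1}\}$ the events need care, but $\{N_j=k\}\cap\{T_{N_j}\le t\}=\{N_j=k\}\cap\{T_k\le t\}\in\mathcal F_t$, since $\{N_j=k\}\in\mathcal F_{T_k}$. The oscillation bound $f_k\le 2f_{N_j}$ on $\{N_j\le k<N_{j+1}\}$ is then immediate from the definition of $N_{j+1}$ as a first exceedance time.

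For sparseness I will use the characterization \eqref{eq: sparse property prelim} with $\nu=\mu_\sigma$, or more directly the defining inequality. Fix $j$ and $A\in\mathcal F_{T_{N_j}}$ with $A\subseteq E_j=\{N_j<\infty\}$ and $\sigma(A)<\infty$. It suffices to treat $A\cap\{N_j=k\}$ for each $k$ and sum. On $B:=A\cap\{N_j=k\}\in\mathcal F_{T_k}$ I decompose $B\cap E_{j+1}$ as the disjoint union over $i>k$ of the sets $B\cap\{N_{j+1}=i\}$, each lying in $\mathcal F_{T_i}$. On such a set $f_i>2f_k$, so
\[
\sigma(B\cap\{N_{j+1}=i\})<\frac{1}{2}\int_{B\cap\{N_{j+1}=i\}}\frac{f_i}{f_k}\,\mathrm d\mu_\sigma .
\]
Because $f_k>0$ on $B$ (as $N_0\le N_j=k$ and $f$ is nonnegative), I can replace $f_i$ by $f$ via the martingale property, using that $\mathbf 1_{B\cap\{N_{j+1}=i\}}/f_k$ is $\mathcal F_{T_i}$-measurable. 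Summing over $i$, and using that these sets are disjoint subsets of $B$ with $B\in\mathcal F_{T_k}$, gives
\[
\sigma(B\cap E_{j+1})\le\frac12\int_B\frac{f}{f_k}\,\mathrm d\mu_\sigma=\frac12\int_B\frac{f_k}{f_k}\,\mathrm d\mu_\sigma=\frac12\sigma(B).
\]
Summing over $k$ yields $\sigma(A\cap E_{j+1})\le\tfrac12\sigma(A)$, which is $\tfrac12$-sparseness with $\eta=\tfrac12$.

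I expect the main obstacle to be the integrability justifications rather than the combinatorics. Conditional expectations of $f\in L^p_\sigma$ with respect to the $\sigma$-finite measure $\mu_\sigma|_{\mathcal F_{T_k}}$ are well defined, but the step $\int_B f\,/f_k=\int_B 1$ needs monotone convergence, for instance truncating $1/f_k$ by $\min\{1/f_k,n\}$ and restricting to finite-measure pieces of $B$. Since $\sigma(B)<\infty$ and everything is nonnegative, this passage works for $f\in L^p_\sigma$ with $p\ge1$. I also need to confirm that $\mu_\sigma|_{\mathcal F_{T_{N_j}}}$ is $\sigma$-finite, as the sparseness definition requires; this follows from the preliminaries, since every stopping time has this property.
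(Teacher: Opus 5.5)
Your proposal is correct and follows the paper's proof in substance: the $N_j$ are stopping times by induction, $\{T_{N_j}\le t\}=\bigcup_k\{N_j=k\}\cap\{T_k\le t\}$, and sparseness comes from $f_{N_{j+1}}>2f_{N_j}$ together with the martingale property. The only difference is cosmetic: you check the defining inequality in integrated form on the pieces $A\cap\{N_j=k\}\cap\{N_{j+1}=i\}$, whereas the paper bounds $\E_\sigma[\mathbf 1_{\{T_{N_{j+1}}<\infty\}}\mid\mathcal F_{T_{N_j}}]$ directly.

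Two small repairs are needed.

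\begin{itemize}
\item Positivity of $f_k$ on $\{N_j=k\}$ does not follow from $N_0\le k$, since a nonnegative martingale can vanish after time $N_0$. It follows instead by induction from $f_{N_0}>0$ and $f_{N_{j+1}}>2f_{N_j}$.
\item The set $E_j=\{T_{N_j}<\infty\}$ can be strictly smaller than $\{N_j<\infty\}$ if some $T_k=\infty$. This is harmless, because your bound for the larger sets $\{N_j<\infty\}\supseteq E_j$ and $\{N_{j+1}<\infty\}\supseteq E_{j+1}$ implies the required one.
\end{itemize}
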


\begin{proof}
Since the \(T_k\) are increasing, \(\{\mathcal F_{T_k}\}_{k=0}^\infty\) is a discrete filtration.
Since \(\mu_\sigma|_{\mathcal F_{T_k}}\) is \(\sigma\)-finite and \(f\in L^p_\sigma\), each \(f_k\) is well-defined in \(L^p_\sigma\) by \cite[Corollary~2.6.30]{hytonen_analysis_2016}.
The definitions of \(N_0\) and \(N_{j+1}\) therefore show inductively that each \(N_j\) is a stopping time for this filtration. They also give
$
f_k\le 2f_{N_j}$ on $\{N_j\le k<N_{j+1}\}.$
Define $\rho_j:=T_{N_j}$, then
\[
\{\rho_j\le t\}
=
\bigcup_{k=0}^\infty
\{N_j=k\}\cap\{T_k\le t\}\in \mc{F}_t,
\]
so \(\rho_j\) is a stopping time. 

It remains to prove sparsity. On \(\{\rho_{j+1}<\infty\}\), we have
\(
f_{N_{j+1}}>2f_{N_j}.
\)
Moreover, since
\(\rho_j\le\rho_{j+1}\) and
\(\{\rho_{j+1}<\infty\}\in\mathcal F_{\rho_{j+1}}\),
the stopped conditional-expectation identities and the tower property give
\[
\begin{aligned}
\E_\sigma[
f_{N_{j+1}}\mathbf 1_{\{\rho_{j+1}<\infty\}}
\mid\mathcal F_{\rho_j}]
&=
\E_\sigma[
f\mathbf 1_{\{\rho_{j+1}<\infty\}}
\mid\mathcal F_{\rho_j}]
\le
f_{N_j}\mathbf 1_{\{\rho_j<\infty\}}.
\end{aligned}
\]
Since \(f_{N_j}>0\) on \(\{\rho_j<\infty\}\), it follows that
\[
\E_\sigma[
\mathbf 1_{\{\rho_{j+1}<\infty\}}
\mid\mathcal F_{\rho_j}]
\le
\frac{\mathbf 1_{\{\rho_j<\infty\}}}{2f_{N_j}}
\E_\sigma[
f_{N_{j+1}}\mathbf 1_{\{\rho_{j+1}<\infty\}}
\mid\mathcal F_{\rho_j}]
\le
\frac12\mathbf 1_{\{\rho_j<\infty\}}.
\]
Thus
\(\{\rho_j\}_{j=0}^\infty\) is \(\tfrac12\)-sparse with respect to
\(\mu_\sigma\) by \eqref{eq: sparse property prelim}.
\end{proof}

The following is the martingale Carleson embedding theorem \cite[Theorem~3.1 and Corollary~3.4]{tanaka_positive_2013}, applied to the sampled filtration $(\mathcal F_{\tau_j})_{j\ge0}$. We include the short proof to record the precise constant.
\begin{lemma}[Sparse stopping-time embedding]\label{lemma: carleson embedding}
Let \(\sigma\) be a weight, let
\(\{\tau_j\}_{j=0}^\infty\) be an \(\eta\)-sparse sequence of stopping times
with respect to \(\mu_\sigma\), and set
\(E_j:=\{\tau_j<\infty\}\). Then, for every \(p\in(1,\infty)\) and every
nonnegative \(f\in L^p_\sigma\),
\begin{equation*}
\biggl(
\sum_{j=0}^\infty
\int_{E_j}
\E_\sigma[f\mid\mathcal F_{\tau_j}]^p
\,\mathrm d\mu_\sigma
\biggr)^{1/p}
\le
\eta^{-1/p}p'\|f\|_{L^p_\sigma}.
\end{equation*}
\end{lemma}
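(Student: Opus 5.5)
The idea is to pass from the sum over $j$ to a single integral by means of the sparse sets $F_j:=E_j\setminus E_{j+1}$, and then bound the resulting function by the sequential weighted Doob inequality (Proposition~\ref{prop: sequential weighted doob}).

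First, fix $j$ and write $g_j:=\E_\sigma[f\mid\mathcal F_{\tau_j}]$. This function is $\mathcal F_{\tau_j}$-measurable and nonnegative, and $E_j\in\mathcal F_{\tau_j}$. The sparseness characterization \eqref{eq: sparse property prelim}, applied with $\nu=\mu_\sigma$, gives $\mathbf 1_{E_j}\le \eta^{-1}\E_\sigma[\mathbf 1_{F_j}\mid\mathcal F_{\tau_j}]$. Multiplying by $g_j^p$, integrating against $\mu_\sigma$, and pulling the $\mathcal F_{\tau_j}$-measurable factor $g_j^p\mathbf 1_{E_j}$ inside the conditional expectation, we obtain
\[
\int_{E_j} g_j^p\,\mathrm d\mu_\sigma\le \eta^{-1}\int_{F_j} g_j^p\,\mathrm d\mu_\sigma .
\]
For general nonnegative $f$ one first runs this step with truncations $g_j\wedge n$ restricted to sets of finite $\mu_\sigma$-measure in $\mathcal F_{\tau_j}$; such sets exist by $\sigma$-finiteness. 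The inequality then follows by monotone convergence.

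Second, the sets $E_j$ are nested decreasing, so the sets $F_j$ are pairwise disjoint. On $F_j\subseteq E_j$ we have $g_j\le M_{\sigma,\mathcal S}f$. Summing over $j$ therefore gives
\[
\sum_j\int_{E_j}g_j^p\,\mathrm d\mu_\sigma\le \eta^{-1}\int_X (M_{\sigma,\mathcal S}f)^p\,\mathrm d\mu_\sigma .
\]
Applying Proposition~\ref{prop: sequential weighted doob} with weight $\sigma$ bounds the right-hand side by $\eta^{-1}(p')^p\|f\|_{L^p_\sigma}^p$. Taking $p$-th roots yields the stated constant $\eta^{-1/p}p'$.

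The only delicate point is the first step: justifying the use of the conditional expectation identity on a $\sigma$-finite but possibly infinite measure space, with $g_j^p$ not a priori integrable. This is handled by the truncation and monotone convergence argument described there. Everything else is routine.
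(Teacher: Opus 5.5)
Your proposal is correct and matches the paper's proof: both use the conditional sparsity bound \eqref{eq: sparse property prelim} to pass from $E_j$ to the pairwise disjoint sets $E_j\setminus E_{j+1}$, dominate $\E_\sigma[f\mid\mathcal F_{\tau_j}]$ there by $M_{\sigma,\mathcal S}f$, and conclude with Proposition~\ref{prop: sequential weighted doob}. Your truncation and $\sigma$-finiteness remark only spells out a justification that the paper leaves implicit.
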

\begin{proof}
The conditional sparsity estimate \eqref{eq: sparse property prelim}, the
pairwise disjointness of the sets \(E_j\setminus E_{j+1}\), and
Proposition~\ref{prop: sequential weighted doob} give
\begin{align*}
\sum_{j=0}^\infty
\int_{E_j}
\E_\sigma[f\mid\mathcal F_{\tau_j}]^p\,\mathrm d\mu_\sigma
&\le
\eta^{-1}
\sum_{j=0}^\infty
\int_{E_j\setminus E_{j+1}}
\E_\sigma[f\mid\mathcal F_{\tau_j}]^p\,\mathrm d\mu_\sigma
\le
\eta^{-1}\|M_{\sigma,\mathcal S}f\|_{L^p_\sigma}^p
\le
\eta^{-1}(p')^p\|f\|_{L^p_\sigma}^p.
\end{align*}
This finishes the proof.
\end{proof}

\section[A sequential two weight T1 theorem]{A sequential two weight
\texorpdfstring{$T1$}{T1} theorem}\label{sec:continuous-time-t1}
In this section we prove Theorem~\ref{thm: stoch t1 seq test}, which characterizes the two weight norm of the positive sequential operator \(T\) by direct and dual testing conditions. We treat sufficiency and necessity separately, adapting the arguments of \cite[Theorem~6.1]{HytonenA2remarks} and \cite[Sections~3.A and 3.B]{hanninen_two-weight_2016} to the stopping-time setting.

Throughout this section, we assume the hypotheses and use
the notation of Theorem~\ref{thm: stoch t1 seq test}.

\subsection{Sufficiency}
 In the sufficiency argument, the principal stopping sequences constructed in Lemma \ref{lem:weighted-principal-stopping-sequence} replace the principal cubes used in \cite{hanninen_two-weight_2016, HytonenA2remarks}.

\begin{proof}[Proof of sufficiency in Theorem~\ref{thm: stoch t1 seq test}]
By positivity and duality, it suffices to prove that
\[
\ip{T(f\sigma),g\omega}
\lesssim_{p,q}
(\mathfrak T_{p,q}+\mathfrak T^*_{p,q})
\nrm f_{L^p_\sigma}\nrm g_{L^{q'}_\omega},
\]
for nonnegative $f\in L^p_\sigma$ and $g\in L^{q'}_\omega$.
\proofpart{Principal stopping sequences and decomposition of the pairing}

Apply
Lemma~\ref{lem:weighted-principal-stopping-sequence} to \((f,\sigma)\) and
\((g,\omega)\), denote the resulting principal indices by \(N_j\) and
\(M_i\), respectively, and define the stopping times 
\(\tau_j:=T_{N_j}\) and \(\upsilon_i:=T_{M_i}\).
Then \(\{\tau_j\}_{j=0}^\infty\) is \(\tfrac12\)-sparse with respect to
\(\mu_\sigma\), and \(\{\upsilon_i\}_{i=0}^\infty\) is
\(\tfrac12\)-sparse with respect to \(\mu_\omega\). Set
\(E_j:=\{\tau_j<\infty\}\) and
\(\widetilde E_i:=\{\upsilon_i<\infty\}\).

To rewrite the pairing, observe that
\(\lambda_k\E[f\sigma\mid\mathcal F_{T_k}]\) is
\(\mathcal F_{T_k}\)-measurable. Therefore
\begin{equation}
\begin{aligned}
\ip{T(f\sigma),g\omega}
&=
\sum_{k=0}^\infty
\ip{\lambda_k\E[f\sigma\mid\mathcal F_{T_k}],\E[g\omega\mid\mathcal F_{T_k}]}\\
&=
\sum_{k=0}^\infty
\ips{\lambda_k\E[\sigma\mid\mathcal F_{T_k}]\,\E_\sigma[f\mid\mathcal F_{T_k}],
\E[\omega\mid\mathcal F_{T_k}]\,\E_\omega[g\mid\mathcal F_{T_k}]}. \label{eq:pairing-rewrite}
\end{aligned}\end{equation}

To decompose the pairing, note that if $k<N_0$, then
$\E_\sigma[f\mid\mathcal F_{T_k}]=0$, and if \(k<M_0\), then
$\E_\omega[g\mid\mathcal F_{T_k}]=0$. Hence only
\(k\ge \max\{N_0, M_0\}\) contribute to
\eqref{eq:pairing-rewrite}. For each such \(k\) and any \(x\in X\), there are
unique indices \(i,j\ge0\) such that
\begin{equation*}
N_j(x)\le k<N_{j+1}(x),
\qquad
M_i(x)\le k<M_{i+1}(x).
\end{equation*}
Exactly one of the relations \(N_j(x)\le M_i(x)\) or \(M_i(x)<N_j(x)\) holds. 
\begin{align*}
\mathrm I
&:=
\sum_{i,j,k=0}^\infty
\ips{
\mathbf 1_{\{N_j\le M_i\le k<\min(M_{i+1},N_{j+1})\}}
\lambda_k\E[\sigma\mid\mathcal F_{T_k}]\E_\sigma[f\mid\mathcal F_{T_k}],
\E[g\omega\mid\mathcal F_{T_k}]
},\\
\mathrm{II}
&:=
\sum_{i,j,k=0}^\infty
\ips{
\mathbf 1_{\{M_i<N_j\le k<\min(M_{i+1},N_{j+1})\}}
\lambda_k\E[f\sigma\mid\mathcal F_{T_k}],
\E[\omega\mid\mathcal F_{T_k}]\,\E_\omega[g\mid\mathcal F_{T_k}]
}.
\end{align*}
These two regions partition the nonzero summands in
\eqref{eq:pairing-rewrite}, so
\begin{equation*}
\ip{T(f\sigma),g\omega}=\mathrm I+\mathrm{II}.
\end{equation*}

\proofpart{Estimate of I}
For $i,j,k\geq 0$ define
\begin{align*}
    R_{ijk}&:= \{N_j\le M_i\le k<\min(M_{i+1},N_{j+1})\}\cap \widetilde E_{i}\\
    D_{ij}&:=\{N_j\le M_i<N_{j+1}\}\cap \widetilde E_{i}.
\end{align*}
Then $R_{ijk}$ is \(\mathcal F_{T_k}\)-measurable and $R_{ijk} \subseteq D_{ij}$. Moreover, on $R_{ijk}$ Lemma~\ref{lem:weighted-principal-stopping-sequence} gives the two bounds
\begin{equation*}
\E_\sigma[f\mid\mathcal F_{T_k}]
\le
2\,\E_\sigma[f\mid\mathcal F_{\tau_j}],
\qquad
\E_\omega[g\mid\mathcal F_{T_k}]
\le
2\,\E_\omega[g\mid\mathcal F_{\upsilon_i}].
\end{equation*}
and \(\tau_j,\upsilon_i\le T_k\). After multiplication by $\mb{1}_{R_{ijk}}$,  \eqref{eq:stopped-sigma-algebra-facts} shows that all the factors in I are \(\mathcal F_{T_k}\)-measurable, so we may replace \(\E[\omega\mid\mathcal F_{T_k}]\,\mathrm d\mu\) by \(\mathrm d\mu_\omega\). Using
\eqref{eq: weighted cond exp identity}, positivity, and the fact that
$N_j\le k$ implies $\tau_j \leq T_k$, we obtain
\begin{align*}
\mathrm I
&\le
4\sum_{i,j=0}^\infty\sum_{k=0}^\infty
\int_X
\mathbf 1_{R_{ijk}}
\lambda_k\E[\sigma\mid\mathcal F_{T_k}]
\E_\sigma[f\mid\mathcal F_{\tau_j}]
\E_\omega[g\mid\mathcal F_{\upsilon_i}]\,\mathrm d\mu_\omega\\
&\le
4\sum_{j=0}^\infty
\biggl\langle
\E_\sigma[f\mid\mathcal F_{\tau_j}]\,T_{\tau_j}(\sigma),
\sum_{i=0}^\infty
\E_\omega[g\mid\mathcal F_{\upsilon_i}]\mathbf 1_{D_{ij}}\,\omega
\biggr\rangle\\
&=: 4\sum_{j=0}^\infty
\biggl\langle
\E_\sigma[f\mid\mathcal F_{\tau_j}]\,T_{\tau_j}(\sigma),G_j\,\omega
\biggr\rangle.
\end{align*}
We will decompose $\E_\sigma[f\mid\mathcal F_{\tau_j}]$ using the level sets
\begin{equation*}
A_{j,m}:=E_j\cap
\left\{2^m<\E_\sigma[f\mid\mathcal F_{\tau_j}]\le 2^{m+1}\right\}, \qquad m \in \Z.
\end{equation*}
Each $A_{j,m}$ has finite $\sigma$-measure, as $f \in L^p_\sigma$.
Since
$A_{j,m}\in\mathcal F_{\tau_j}$, Hölder's inequality gives
\begin{align*}
\mathrm I
&\lesssim
\sum_{j,m}
2^m
\bigl\langle
\mathbf 1_{A_{j,m}}T_{\tau_j}(\sigma),
G_j\omega
\bigr\rangle
\le
\sum_{j,m}
2^m
\nrm{\mathbf 1_{A_{j,m}}T_{\tau_j}(\sigma)}_{L^q_\omega}
\nrm{\mathbf 1_{A_{j,m}}G_j}_{L^{q'}_\omega}.
\end{align*}
Since \(\frac1p+\frac1{\theta}+\frac{1}{q'}\ge1\), Hölder's inequality for sequences, together with the monotonicity of sequence norms when \(p\le q\), gives 
\begin{align*}
\mathrm I
&\lesssim
\Bigl(\sum_{j,m}2^{mp}\sigma(A_{j,m})\Bigr)^{1/p}
\biggl\|
\biggl\{
\frac{\nrm{\mathbf 1_{A_{j,m}}T_{\tau_j}(\sigma)}_{L^q_\omega}}
{\sigma(A_{j,m})^{1/p}}
\biggr\}_{j,m}
\biggr\|_{\ell^\theta}
\Bigl(\sum_{j,m}
\nrm{\mathbf 1_{A_{j,m}}G_j}_{L^{q'}_\omega}^{q'}
\Bigr)^{1/q'}.
\end{align*}
We will estimate the three resulting factors by \(\|f\|_{L^p_\sigma}\), \(\mathfrak T_{p,q}\), and \(\|g\|_{L^{q'}_\omega}\), respectively.

For the first factor, since $\{\tau_j\}_{j=0}^\infty$ is $\tfrac12$-sparse with respect to
$\mu_\sigma$, 
Lemma~\ref{lemma: carleson embedding} gives
\begin{equation*}
\Bigl(\sum_{j,m}2^{mp}\sigma(A_{j,m})\Bigr)^{1/p}
\le
\Bigl(
\sum_{j\ge0}\int_{E_j}
\E_\sigma[f\mid\mathcal F_{\tau_j}]^p\,\mathrm d\mu_\sigma
\Bigr)^{1/p}
\lesssim_p
\nrm f_{L^p_\sigma}.
\end{equation*}
For the second factor, 
$\{A_{j,m}\}_{j,m}$ is subordinate to the
$\tfrac12$-sparse sequence $\{\tau_j\}_{j=0}^\infty$ with respect to
\(\mu_\sigma\). Hence, by the definition of \(\mathfrak T_{p,q}\),
\begin{equation*}
\left\|
\left\{
\frac{\nrm{\mathbf 1_{A_{j,m}}T_{\tau_j}(\sigma)}_{L^q_\omega}}
{\sigma(A_{j,m})^{1/p}}
\right\}_{j,m}
\right\|_{\ell^\theta}
\le \mathfrak T_{p,q}.
\end{equation*}
For the third factor, note that 
pairwise disjointness of the sets $A_{j,m}$ in $m$ for fixed $j$ gives
\begin{align*}
\sum_{j,m}\nrm{\mathbf 1_{A_{j,m}}G_j}_{L^{q'}_\omega}^{q'}
&\le
\sum_{j\ge0}\int_X
\Bigl(
\sum_{i=0}^\infty
\E_\omega[g\mid\mathcal F_{\upsilon_i}]\mathbf 1_{D_{ij}}
\Bigr)^{q'}\,\mathrm d\mu_\omega.
\end{align*}
Fix \(j\) and \(x\). The relevant indices are those with
\(N_j(x)\le M_i(x)<N_{j+1}(x)\).
Along these indices the numbers
$\E_\omega[g\mid\mathcal F_{\upsilon_i}](x)$ double, and hence
\begin{equation*}
\Bigl(
\sum_{i=0}^\infty
\E_\omega[g\mid\mathcal F_{\upsilon_i}]\mathbf 1_{D_{ij}}
\Bigr)^{q'}
\lesssim_q
\sum_{i=0}^\infty
\E_\omega[g\mid\mathcal F_{\upsilon_i}]^{q'}\mathbf 1_{D_{ij}}.
\end{equation*}
For fixed $i$, the sets $\{D_{ij}\}_{j\ge0}$ are pairwise disjoint, and
\(D_{ij}\subset\widetilde E_i\). Hence
\begin{align*}
\sum_{j,m}\nrm{\mathbf 1_{A_{j,m}}G_j}_{L^{q'}_\omega}^{q'}
&\lesssim_q
\sum_{i,j\ge0}
\int_{D_{ij}}
\E_\omega[g\mid\mathcal F_{\upsilon_i}]^{q'}\,\mathrm d\mu_\omega
\\
&\le
\sum_{i\ge0}\int_{\widetilde E_i}
\E_\omega[g\mid\mathcal F_{\upsilon_i}]^{q'}\,\mathrm d\mu_\omega
\lesssim_q
\|g\|_{L^{q'}_\omega}^{q'},
\end{align*}
where the final inequality follows from
Lemma~\ref{lemma: carleson embedding}, since
$\{\upsilon_i\}_{i=0}^\infty$ is $\tfrac12$-sparse with respect to $\mu_\omega$.
Combining the three bounds gives
\begin{equation*}
\mathrm I\lesssim_{p,q} \mathfrak T_{p,q}\,\nrm f_{L^p_\sigma}\nrm g_{L^{q'}_\omega}.
\end{equation*}

\proofpart{Estimate of II}
In the proof of the estimate for I, interchange
\[
(f,p,\sigma,N_j,\tau_j)
\quad\text{and}\quad
(g,q',\omega,M_i,\upsilon_i).
\]
The sequence exponent is unchanged because $\frac1{p'}-\frac1{q'}=\frac{1}q-\frac1p$, and the direct testing constant becomes the dual testing constant $\mathfrak T^*_{p,q}$. 
Hence
\begin{equation*}
\mathrm{II}\lesssim_{p,q}
\mathfrak T^*_{p,q}\,\nrm f_{L^p_\sigma}\nrm g_{L^{q'}_\omega}.
\end{equation*}
Combining the estimates for $\mathrm I$ and $\mathrm{II}$ with the pairing
decomposition finishes the proof.
\end{proof}

\subsection{Necessity}
For necessity, we will use that the pointwise localization
\(\mathbf 1_A T_\tau(\sigma)\le T(\mathbf 1_A\sigma)\)
reduces the direct testing condition to a vector-valued estimate for normalized indicators, with sparsity controlling their sum. The dual testing condition follows from the self-adjointness of \(T\).

\begin{proof}[Proof of necessity in Theorem~\ref{thm: stoch t1 seq test}]
Fix a stopping time $\tau$ and a set $A\in\mathcal F_\tau$ with
$0<\sigma(A)<\infty$. For every $k\ge0$, we have
\(A\cap\{T_k\ge\tau\}\in\mathcal F_{T_k}\) by
\eqref{eq:stopped-sigma-algebra-facts}.
Hence
\begin{equation*}
\mathbf 1_A\mathbf 1_{\{T_k\ge \tau\}}\E[\sigma\mid \mathcal F_{T_k}]
=
\E[\mathbf 1_A\mathbf 1_{\{T_k\ge \tau\}}\sigma\mid \mathcal F_{T_k}]
\le
\E[\mathbf 1_A\sigma\mid \mathcal F_{T_k}],
\end{equation*}
and therefore, by positivity,
\begin{equation*}
\mathbf 1_A\,T_\tau(\sigma)
=
\sum_{k=0}^\infty
\lambda_k\,
\mathbf 1_A\mathbf 1_{\{T_k\ge \tau\}}
\E[\sigma\mid \mathcal F_{T_k}]
\le
\sum_{k=0}^\infty
\lambda_k\,\E[\mathbf 1_A\sigma\mid \mathcal F_{T_k}]
=
T(\mathbf 1_A\sigma).
\end{equation*}

\proofpart{Direct testing}
Let $\mathcal P=\{A_{j,m}\}_{j,m}$ be a finite family subordinate to a $\tfrac12$-sparse sequence $\{\tau_j\}_{j=0}^\infty$ with respect to $\mu_\sigma$, where \(0<\sigma(A_{j,m})<\infty\) for every \(j,m\).
Write
\(\phi_{j,m}:=\mathbf 1_{A_{j,m}}/\sigma(A_{j,m})^{1/p}\).
These normalized indicators satisfy
\begin{equation*}
\nrms{\sum_{j,m} \beta_{j,m}\phi_{j,m}}_{L^p_\sigma}
\lesssim_p
\Bigl(\sum_{j,m}\beta_{j,m}^p\Bigr)^{1/p}
\end{equation*}
for nonnegative finitely supported coefficients \(\beta_{j,m}\).
Indeed, by duality, let $h$ be nonnegative with
$\|h\|_{L^{p'}_\sigma}\le1$. Because
\(A_{j,m}\in\mathcal F_{\tau_j}\), the integral of \(h\) over \(A_{j,m}\)
is unchanged if \(h\) is replaced by
\(\E_\sigma[h\mid\mathcal F_{\tau_j}]\). Hölder's inequality and the
disjointness of the regions at each level therefore give
\begin{align*}
\int_X
\sum_{j,m} \beta_{j,m}\phi_{j,m} h\,\mathrm d\mu_\sigma
&\le
\Bigl(\sum_{j,m}\beta_{j,m}^p\Bigr)^{1/p}
\Bigl(
\sum_j
\int_{E_j}
\E_\sigma[h\mid\mathcal F_{\tau_j}]^{p'}\,\mathrm d\mu_\sigma
\Bigr)^{1/p'}
\lesssim_p
\Bigl(\sum_{j,m}\beta_{j,m}^p\Bigr)^{1/p}
\|h\|_{L^{p'}_\sigma}.
\end{align*}
The last inequality follows from Lemma~\ref{lemma: carleson embedding}.

Applying \cite[Proposition~3.1]{hanninen_two-weight_2016} to
$T(\cdot\sigma)$ and the family $\{\phi_{j,m}\}_{j,m}$, and then using the
preceding estimate, gives
\begin{equation*}
\biggl\|
\biggl\{
\nrm{T(\phi_{j,m}\sigma)}_{L^q_\omega}
\biggr\}_{j,m}
\biggr\|_{\ell^\theta}
\lesssim_p
\|T(\cdot\sigma)\|_{L^p_\sigma\to L^q_\omega}.
\end{equation*}
Using $\mathbf 1_{A_{j,m}}T_{\tau_j}(\sigma)\le
T(\mathbf 1_{A_{j,m}}\sigma)$, and then taking the supremum over
$\mathcal P$, gives
\begin{equation*}
\mathfrak T_{p,q}\lesssim_p\|T(\cdot\sigma)\|_{L^p_\sigma\to L^q_\omega}.
\end{equation*}

\proofpart{Dual testing}
Since each $\lambda_k$ is $\mathcal F_{T_k}$-measurable, every summand in
\(T\), and hence \(T\) itself, is self-adjoint with respect to the \(\mu\)-pairing.  
Applying the direct testing argument with
\[
(p,q,\sigma,\omega)
\quad\text{replaced by}\quad
(q',p',\omega,\sigma),
\]
and using $(\frac{1}{p'}-\frac{1}{q'})_+=(\frac1q-\frac1p)_+$, we obtain
\[
\mathfrak T^*_{p,q}
\lesssim_{p,q}
\|T(\cdot\omega)\|_{L^{q'}_\omega\to L^{p'}_\sigma}
=
\|T(\cdot\sigma)\|_{L^p_\sigma\to L^q_\omega}.
\]
Together with the direct testing estimate, this completes the proof.
\end{proof}

When \(p<q\), boundedness of $T$ also restricts where
the coefficients $\lambda_k$ can be non-zero, as we show in the next proposition.
\begin{proposition}
\label{prop:atomicity-p-less-q}
Let \(1<p<q<\infty\), and suppose that
\(T(\cdot\sigma):L^p_\sigma\to L^q_\omega\) is bounded.
Then, for every \(k\ge0\), \(\lambda_k=0\)
\(\mu\)-almost everywhere on the nonatomic part of
\((X,\mathcal F_{T_k},\mu)\).
In particular, if each
\((X,\mathcal F_{T_k},\mu)\) is nonatomic, then \(T=0\).
\end{proposition}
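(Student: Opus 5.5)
We fix \(k\) and use only the testing characterization from Theorem~\ref{thm: stoch t1 seq test} (or rather its necessity half). When \(p<q\) we have \(\theta=\infty\), so the direct testing constant is just a supremum over single stopping regions. Take \(\tau:=T_k\) and any \(A\in\mathcal F_{T_k}\) with \(0<\sigma(A)<\infty\). The necessity proof gives \(\mathbf 1_A T_{T_k}(\sigma)\le T(\mathbf 1_A\sigma)\). Keeping only the \(k\)-th summand yields the pointwise lower bound
\[
\mathbf 1_A\lambda_k\E[\sigma\mid\mathcal F_{T_k}]\le T(\mathbf 1_A\sigma).
\]
Hence, writing \(C:=\|T(\cdot\sigma)\|_{L^p_\sigma\to L^q_\omega}\),
\[
\Bigl(\int_A \lambda_k^q\E[\sigma\mid\mathcal F_{T_k}]^q\,\mathrm d\mu_\omega\Bigr)^{1/q}\le C\,\sigma(A)^{1/p}.
\]
Since \(A\in\mathcal F_{T_k}\) and the integrand is \(\mathcal F_{T_k}\)-measurable, we may replace \(\omega\) by \(\E[\omega\mid\mathcal F_{T_k}]\). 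We may also write \(\sigma(A)=\int_A\E[\sigma\mid\mathcal F_{T_k}]\,\mathrm d\mu\). Setting
\[
h:=\lambda_k^q\,\E[\sigma\mid\mathcal F_{T_k}]^q\,\E[\omega\mid\mathcal F_{T_k}]
\quad\text{and}\quad
s:=\E[\sigma\mid\mathcal F_{T_k}],
\]
both \(\mathcal F_{T_k}\)-measurable, \(s>0\) a.e. (as \(\sigma>0\)), and \(\E[\omega\mid\mathcal F_{T_k}]>0\) a.e., we obtain
\[
\int_A h\,\mathrm d\mu\le C^q\Bigl(\int_A s\,\mathrm d\mu\Bigr)^{q/p}
\qquad\text{for all } A\in\mathcal F_{T_k}\text{ with } \sigma(A)<\infty.
\]

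The key step is that \(q/p>1\) forces \(h=0\) on the nonatomic part. Suppose \(\{h>0\}\) met the nonatomic part \(Y\) in positive measure. By \(\sigma\)-finiteness of \(\mu_\sigma|_{\mathcal F_{T_k}}\) we can then find \(B\subseteq Y\cap\{h>\varepsilon\}\cap\{s<R\}\) with \(B\in\mathcal F_{T_k}\), \(0<\mu(B)<\infty\), and \(\sigma(B)<\infty\).

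Here we use that \(\mu|_{\mathcal F_{T_k}}\) restricted to \(Y\) is nonatomic, in the sense of the \(\sigma\)-algebra \(\mathcal F_{T_k}\). By Sierpiński's theorem, \(B\) can be partitioned into \(n\) sets \(B_1,\dots,B_n\in\mathcal F_{T_k}\), each of \(\mu\)-measure \(\mu(B)/n\). We then have \(\int_{B_i}h\ge\varepsilon\mu(B)/n\) and \(\int_{B_i}s\le R\mu(B)/n\). The inequality then gives
\[
\varepsilon\mu(B)/n\le C^q\bigl(R\mu(B)/n\bigr)^{q/p},
\]
which is false for large \(n\) because \(q/p>1\). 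So \(h=0\) a.e. on \(Y\). Since \(s\) and \(\E[\omega\mid\mathcal F_{T_k}]\) are strictly positive, this gives \(\lambda_k=0\) \(\mu\)-a.e. on \(Y\).

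For the final claim, if every \((X,\mathcal F_{T_k},\mu)\) is nonatomic, then every \(\lambda_k=0\) a.e., and so \(T=0\).

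The main obstacle is the measure-theoretic care needed. First, the nonatomic part must be taken with respect to the sub-\(\sigma\)-algebra \(\mathcal F_{T_k}\), and the atomic/nonatomic decomposition must be justified for the \(\sigma\)-finite measure \(\mu|_{\mathcal F_{T_k}}\). Second, we must check that \(\E[\sigma\mid\mathcal F_{T_k}]\) and \(\E[\omega\mid\mathcal F_{T_k}]\) are a.e. finite and positive, so that the sets \(\{s<R\}\) exhaust and dividing out is legitimate.
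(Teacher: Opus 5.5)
Your argument is correct and is essentially the paper's proof. Both keep only the $k$-th summand, $\mathbf 1_A\lambda_k\E[\sigma\mid\mathcal F_{T_k}]\le T(\mathbf 1_A\sigma)$, and both use $q/p>1$ by splitting a set in $\mathcal F_{T_k}$ into $n$ equal pieces via nonatomicity. The differences are cosmetic: the paper splits into pieces of equal $\sigma$-measure and sums over them to get $\int_A(\lambda_k\E[\sigma\mid\mathcal F_{T_k}])^q\,\mathrm d\mu_\omega\le C^q\sigma(A)^{q/p}n^{1-q/p}\to0$ directly, whereas you split a level set $\{h>\varepsilon,\ s<R\}$ into pieces of equal $\mu$-measure and argue by contradiction. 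Also, your appeal to the testing theorem is unnecessary, since you only use boundedness on $\mathbf 1_A$.
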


\begin{proof}
Fix \(k\ge0\). By the \(\sigma\)-finiteness of \(\mu_\sigma|_{\mathcal F_{T_k}}\), it suffices to show that \(\lambda_k=0\) $\mu$-almost everywhere on every \(A\in\mathcal F_{T_k}\) with \(0<\sigma(A)<\infty\) contained in the nonatomic part of \((X,\mathcal F_{T_k},\mu)\).

Since \(\sigma>0\), the restriction
\(\mu_\sigma|_{\mathcal F_{T_k}}\) is also nonatomic
on \(A\). For each \(n\ge1\), partition \(A\) into
sets \(A_1,\ldots,A_n\in\mathcal F_{T_k}\) with
\(\sigma(A_i)=\sigma(A)/n\).
Since \(A_i\in\mathcal F_{T_k}\), positivity and
boundedness give
\[
\int_A
\bigl(\lambda_k\E[\sigma\mid\mathcal F_{T_k}]\bigr)^q
\,\mathrm d\mu_\omega
\le
\sum_{i=1}^n
\|T(\mathbf1_{A_i}\sigma)\|_{L^q_\omega}^q
\le
\|T(\cdot\sigma)\|_{L^p_\sigma\to L^q_\omega}^q
\sigma(A)^{q/p}n^{1-q/p}.
\]
Since \(p<q\), the right-hand side tends to zero
as \(n\to\infty\).
The positivity of \(\omega\) and
\(\E[\sigma\mid\mathcal F_{T_k}]\) then gives
\(\lambda_k=0\) $\mu$-almost everywhere on \(A\).
\end{proof}

\section[Testing reduction for continuous-time sparse operators]{Testing
reduction for continuous-time sparse operators}\label{sec:sparse-testing}
In this section we reduce strong-type bounds for continuous-time sparse operators, with \(1<p<\infty\), to conditional testing estimates. When \(r<p\), linearization reduces the bound to the direct and dual testing estimates; when \(r\ge p\), subadditivity and principal stopping reduce it to the direct testing estimate. For \(r<p\), the same linearization also reduces the weak-type bound to the dual testing estimate. These arguments generalize the dyadic arguments in \cite[Sections~2.1 and~2.2]{hytonen_weak_2017}.

Throughout this section, {fix \(1<p<\infty\) and \(r>0\). Let \(\mathcal S=\{\tau_k\}_{k=0}^\infty\) be an increasing, not necessarily sparse, sequence of stopping times}, and let \(\sigma,\omega\) be weights. Write
\[
E_k:=\{\tau_k<\infty\},
\qquad
\sigma_k:=\E[\sigma\mid\mathcal F_{\tau_k}], \qquad k\geq 0.
\]
For a stopping time \(\tau\), set
\begin{align*}
    \mathcal H_\tau&:=\sum_{k=0}^\infty
\sigma_k^r\mathbf 1_{E_k}\mathbf 1_{\{\tau_k\ge\tau\}},\\
\mathcal H^*_\tau
&:=
\sum_{k=0}^\infty
\sigma_k^{r-1}\mathbf 1_{\{\sigma_k>0\}}
\E[\omega\mid\mathcal F_{\tau_k}]
\mathbf 1_{E_k}\mathbf 1_{\{\tau_k\ge\tau\}}.
\end{align*}
Define the direct and dual testing constants by
\begin{align}
\label{eq: mathfrak tr definition}
\mathfrak T_r
&:=
\sup_\tau
\biggl\|
\frac{
\E\bigl[
\mathcal H_\tau^{p/r}
\omega
\mathrel{\big|}\mathcal F_\tau
\bigr]^{r/p}
}{
\E[\sigma\mid\mathcal F_\tau]^{r/p}
}
\biggr\|_{L^\infty_\mu},&&1<p<\infty,\\
\label{eq: mathfrak tr definition 2}
\mathfrak T_r^*
&:=
\sup_\tau
\biggl\|
\frac{
\E\bigl[
 (\mathcal H_\tau^*)^{(p/r)'}\sigma
\mathrel{\big|}\mathcal F_\tau
\bigr]^{1/(p/r)'}
}
{\E[\omega\mid\mathcal F_\tau]^{1/(p/r)'}}
\biggr\|_{L^\infty_\mu},&&r<p<\infty,
\end{align}
where the suprema range over stopping times.

\subsection{Linearization when $p>r$}

In the case $p>r$ we will first use the stopping-time analogue of the linearization in \cite[Lemma~2.1]{hytonen_weak_2017}, with the weighted dyadic maximal operator replaced by the sequential weighted maximal operator from Proposition~\ref{prop: sequential weighted doob}. In \cite{hytonen_weak_2017}, the two weight testing theorem for positive dyadic operators \cite[Proposition~2.3]{hytonen_weak_2017} is used afterwards. In our setting, we use Theorem~\ref{thm: stoch t1 seq test} for the strong-type bound and \cite[Theorem~1.6]{chen_two-weighted_2020} for the weak-type bound.
\begin{proposition}\label{prop:case-p-greater-r}
Assume \(r<p\). Then
\begin{align*}
\|\mathcal A_{\mathcal S}^r(\cdot\sigma)\|
_{L^p_\sigma\to L^p_\omega}^r
&\lesssim_{p,r}\mathfrak T_r+\mathfrak T_r^*,\\
\|\mathcal A_{\mathcal S}^r(\cdot\sigma)\|
_{L^p_\sigma\to L^{p,\infty}_\omega}^r
&\lesssim_{p,r}\mathfrak T_r^*.
\end{align*}
\end{proposition}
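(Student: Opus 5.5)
The plan is to linearize the $r$-th power of the sparse operator, following \cite[Lemma~2.1]{hytonen_weak_2017}. Put $s:=p/r>1$. Then
\[
\|\mathcal A^r_{\mathcal S}(f\sigma)\|_{L^p_\omega}^r=\|(\mathcal A^r_{\mathcal S}(f\sigma))^r\|_{L^s_\omega},
\]
and pointwise
\[
(\mathcal A^r_{\mathcal S}(f\sigma))^r=\sum_k \sigma_k^r\,\E_\sigma[|f|\mid\mathcal F_{\tau_k}]^r\mathbf 1_{E_k}
\]
by \eqref{eq: weighted cond exp identity}. By Jensen, $\E_\sigma[|f|\mid\mathcal F_{\tau_k}]^r\le \E_\sigma[|f|^r\mid\mathcal F_{\tau_k}]$ when $r\le1$. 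When $r>1$, I would instead bound
\[
\E_\sigma[|f|\mid\mathcal F_{\tau_k}]^r\le \E_\sigma[|f|\mid\mathcal F_{\tau_k}]^{r-1}\,\E_\sigma[|f|\mid\mathcal F_{\tau_k}],
\]
but the cleaner route is the following. Set $h:=|f|^r\in L^s_\sigma$, with $\|h\|_{L^s_\sigma}=\|f\|_{L^p_\sigma}^r$. I would like to write the operator as
\[
\sum_k\sigma_k^{r-1}\mathbf 1_{\{\sigma_k>0\}}\mathbf 1_{E_k}\,\E[h\sigma\mid\mathcal F_{\tau_k}],
\]
which has the form \eqref{eq: stochastic T} with $T_k=\tau_k$ and $\lambda_k=\sigma_k^{r-1}\mathbf 1_{\{\sigma_k>0\}}\mathbf 1_{E_k}$. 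These $\lambda_k$ are $\mathcal F_{\tau_k}$-measurable and vanish on $\{\tau_k=\infty\}$. Jensen gives this for $r\le1$, but for $r>1$ it is not a pointwise bound. So for general $r<p$ I would linearize via the maximal function: $\E_\sigma[|f|\mid\mathcal F_{\tau_k}]^r\le (M_{\sigma,\mathcal S}f)^{r-1}\E_\sigma[|f|\mid\mathcal F_{\tau_k}]$ is not linear either. The more robust choice, mimicking Hytönen--Li, is
\[
\E_\sigma[|f|\mid\mathcal F_{\tau_k}]^r\le \E_\sigma[(M_{\sigma,\mathcal S}f)^{r}\mid\mathcal F_{\tau_k}]\cdot\mathbf 1_{E_k}\quad\text{(valid on }E_k\text{)},
\]
since $M_{\sigma,\mathcal S}f\ge \E_\sigma[|f|\mid\mathcal F_{\tau_k}]$ on $E_k$. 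Thus
\[
(\mathcal A^r_{\mathcal S}(f\sigma))^r\le T(h\sigma),\qquad h:=(M_{\sigma,\mathcal S}f)^r,
\]
and by Proposition~\ref{prop: sequential weighted doob}, $\|h\|_{L^s_\sigma}\le (p')^r\|f\|_{L^p_\sigma}^r$. This reduces both claimed estimates to bounds for the positive operator $T(\cdot\sigma)$ from $L^s_\sigma$ to $L^s_\omega$, respectively to $L^{s,\infty}_\omega$.

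For the strong bound I would apply Theorem~\ref{thm: stoch t1 seq test} with $p=q=s$, so $\theta=\infty$. The testing constants then become suprema over single stopping regions $(\tau,A)$ with $A\in\mathcal F_\tau$. The key computation is
\[
\mathbf 1_A T_\tau(\sigma)=\mathbf 1_A\sum_k\sigma_k^{r}\mathbf 1_{E_k}\mathbf 1_{\{\tau_k\ge\tau\}}=\mathbf 1_A\mathcal H_\tau .
\]
Hence
\[
\|\mathbf 1_AT_\tau(\sigma)\|_{L^s_\omega}^s=\int_A\E[\mathcal H_\tau^{s}\omega\mid\mathcal F_\tau]\,d\mu\le \mathfrak T_r^{s}\int_A\E[\sigma\mid\mathcal F_\tau]\,d\mu=\mathfrak T_r^s\sigma(A).
\]
This uses that $A\in\mathcal F_\tau$ and the definition \eqref{eq: mathfrak tr definition}. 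Similarly,
\[
\mathbf 1_B T_\upsilon(\omega)=\mathbf 1_B\mathcal H^*_\upsilon ,
\]
and the same conditioning argument with exponent $s'=(p/r)'$ gives $\|\mathbf 1_BT_\upsilon(\omega)\|_{L^{s'}_\sigma}\le\mathfrak T_r^*\omega(B)^{1/s'}$, i.e. the dual testing constant is at most $\mathfrak T^*_r$. Combining, $\|T(\cdot\sigma)\|\lesssim \mathfrak T_r+\mathfrak T_r^*$, which gives the first estimate after the $r$-th root bookkeeping.

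For the weak bound, I would use that $\|F\|_{L^{p,\infty}_\omega}^r=\|F^r\|_{L^{s,\infty}_\omega}$. Then I would invoke the weak-type testing theorem \cite[Theorem~1.6]{chen_two-weighted_2020} on the sampled discrete filtration $(\mathcal F_{\tau_k})_k$. This theorem characterizes $L^s_\sigma\to L^{s,\infty}_\omega$ boundedness of such positive martingale operators by dual testing alone. If needed, I would instead prove the duality step directly via the Sawyer-type argument: test $T^*(\mathbf 1_E\omega)$ on stopping regions and use principal stopping (Lemma~\ref{lem:weighted-principal-stopping-sequence}) on $g=\mathbf 1_E$. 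The main obstacle I expect is matching their testing condition, formulated over discrete stopping times of $(\mathcal F_{\tau_k})$, with $\mathfrak T_r^*$, which is taken over arbitrary continuous stopping times. This should work because any stopping time of the sampled filtration is of the form $\tau_N$ with $N$ discrete, and $\mathcal F_{\tau_N}$-sets are admissible in the supremum defining $\mathfrak T_r^*$.
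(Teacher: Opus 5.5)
Your final argument is correct and is essentially the paper's proof. It uses the same linearization $(\mathcal A^r_{\mathcal S}(f\sigma))^r\le T\bigl((M_{\sigma,\mathcal S}f)^r\sigma\bigr)$ with $\lambda_k=\sigma_k^{r-1}\mathbf 1_{\{\sigma_k>0\}}\mathbf 1_{E_k}$ (the paper's operator $L$) and Proposition~\ref{prop: sequential weighted doob}. It then applies Theorem~\ref{thm: stoch t1 seq test} with both exponents $s=p/r$, using $T_\tau(\sigma)=\mathcal H_\tau$ and $T_\tau(\omega)=\mathcal H^*_\tau$, and \cite[Theorem~1.6]{chen_two-weighted_2020} on the sampled filtration. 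For that last step the paper verifies the dual condition at the times $\tau_k$ and adds a routine restriction-and-truncation limiting argument.

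One small correction to your exploratory aside: Jensen goes the other way. The bound $\E_\sigma[|f|\mid\mathcal F_{\tau_k}]^r\le\E_\sigma[|f|^r\mid\mathcal F_{\tau_k}]$ holds for $r\ge1$ and can fail for $r<1$. This does not affect your proof, since the maximal-function linearization you settle on works for every $r$.
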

\begin{proof}
Set \(s:=p/r>1\) and define the positive linear operator
\[
Lh
:=
\sum_{j=0}^\infty
\sigma_j^{r-1}\mathbf 1_{\{\sigma_j>0\}}
\mathbf 1_{E_j}\E[h\mid\mathcal F_{\tau_j}].
\]
Let \(f\in L^p_\sigma\) be nonnegative and put
\(F:=M_{\sigma,\mathcal S}f\).
Since \(F\ge\E_\sigma[f\mid\mathcal F_{\tau_j}]\) on \(E_j\),
\eqref{eq: weighted cond exp identity} gives
\[
\mathcal A_{\mathcal S}^r(f\sigma)^r
=
\sum_{j=0}^\infty
\sigma_j^r\E_\sigma[f\mid\mathcal F_{\tau_j}]^r
\mathbf 1_{E_j}
\le
\sum_{j=0}^\infty
\sigma_j^r\E_\sigma[F^r\mid\mathcal F_{\tau_j}]
\mathbf 1_{E_j}
=
L(F^r\sigma).
\]
By Proposition~\ref{prop: sequential weighted doob},
\(
\|F^r\|_{L^s_\sigma}
=
\|F\|_{L^p_\sigma}^r
\le
(p')^r\|f\|_{L^p_\sigma}^r.
\)
Taking strong and weak norms, respectively, therefore gives
\begin{align*}
\|\mathcal A_{\mathcal S}^r(\cdot\sigma)\|
_{L^p_\sigma\to L^p_\omega}^r
&\le
(p')^r
\|L(\cdot\sigma)\|_{L^s_\sigma\to L^s_\omega},\\
\|\mathcal A_{\mathcal S}^r(\cdot\sigma)\|
_{L^p_\sigma\to L^{p,\infty}_\omega}^r
&\le
(p')^r
\|L(\cdot\sigma)\|_{L^s_\sigma\to L^{s,\infty}_\omega}.
\end{align*}

For every stopping time \(\tau\), we have
\[
L_\tau(\sigma)=\mathcal H_\tau
\qquad\text{and}\qquad
L_\tau(\omega)=\mathcal H_\tau^*.
\]
By the definitions of \(\mathfrak T_r\) and
\(\mathfrak T_r^*\), for \(A,B\in\mathcal F_\tau\) with
\(0<\sigma(A),\omega(B)<\infty\), we have
\[
\|\mathbf 1_A L_\tau(\sigma)\|_{L^s_\omega}
\le \mathfrak T_r\,\sigma(A)^{1/s}\qquad\text{and}\qquad
\|\mathbf 1_B L_\tau(\omega)\|_{L^{s'}_\sigma}
\le \mathfrak T_r^*\,\omega(B)^{1/s'}.
\]
The coefficients of \(L\) are nonnegative,
\(\mathcal F_{\tau_j}\)-measurable, and vanish on
\(\{\tau_j=\infty\}\).
Applying Theorem~\ref{thm: stoch t1 seq test} with both exponents
equal to \(s\), so that \(\theta=\infty\), therefore yields
\[
\|L(\cdot\sigma)\|_{L^s_\sigma\to L^s_\omega}
\lesssim_s\mathfrak T_r+\mathfrak T_r^*,
\]
proving the strong-type bound.

For the weak-type bound, we use
\cite[Theorem~1.6]{chen_two-weighted_2020}
on the filtration \((\mathcal F_{\tau_j})_{j\ge0}\).
Since \(\tau_j\ge\tau_k\) for \(j\ge k\), we have
\[
\sum_{j\ge k}
\sigma_j^{r-1}\mathbf 1_{\{\sigma_j>0\}}
\E[\omega\mid\mathcal F_{\tau_j}]\mathbf 1_{E_j}
\le \mathcal H_{\tau_k}^*.
\]
The dual estimate with \(\tau=\tau_k\) verifies condition
\cite[(1.11)]{chen_two-weighted_2020}, with both exponents equal to \(s\)
and testing constant at most \(\mathfrak T_r^*\).

By \(\sigma\)-finiteness, choose sets
\(X_n\in\mathcal F_{\tau_0}\), increasing to \(X\), with
\(\mu(X_n)+\sigma(X_n)+\omega(X_n)<\infty\).
Since \(X_n\in\mathcal F_{\tau_j}\) for every \(j\ge0\),
restriction to \(X_n\) preserves the conditional expectations.
We apply \cite[Theorem~1.6]{chen_two-weighted_2020} on each \(X_n\) to finite partial sums
of \(L\) with the coefficients truncated from above.
Neither restriction nor truncation increases the dual
testing constant, so passing to the limit by monotone
convergence gives
\[
\|L(\cdot\sigma)\|_{L^s_\sigma\to L^{s,\infty}_\omega}
\lesssim_s\mathfrak T_r^*.
\]
Together with the linearization estimate, this proves the weak-type bound.
\end{proof}
\subsection{Principal stopping when $p\le r$}
For the strong-type bound when $p\le r$, we follow the principal cube argument of \cite[Section~2.2]{hytonen_weak_2017}, using Lemma~\ref{lem:weighted-principal-stopping-sequence} in place of the usual principal cubes.
\begin{proposition}\label{prop:case-p-less-r}
Assume \(r\ge p\). Then
\begin{equation*}
\|\mathcal A_{\mathcal S}^r(\cdot\sigma)\|_{L^p_\sigma\to L^p_\omega}^r
\lesssim_{p,r}
\mathfrak T_r.
\end{equation*}
\end{proposition}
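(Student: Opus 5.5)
We follow the principal stopping argument of \cite[Section~2.2]{hytonen_weak_2017}, with Lemma~\ref{lem:weighted-principal-stopping-sequence} playing the role of principal cubes and Lemma~\ref{lemma: carleson embedding} replacing the dyadic Carleson embedding. Fix a nonnegative \(f\in L^p_\sigma\) and write \(f_k:=\E_\sigma[f\mid\mathcal F_{\tau_k}]\). By \eqref{eq: weighted cond exp identity},
\[
\mathcal A^r_{\mathcal S}(f\sigma)^r=\sum_{k=0}^\infty \sigma_k^r f_k^r\mathbf 1_{E_k}.
\]

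First, apply Lemma~\ref{lem:weighted-principal-stopping-sequence} to the increasing sequence \(\{\tau_k\}_{k\ge0}\), the weight \(\sigma\) and the function \(f\). This gives principal indices \(N_j\) and stopping times \(\rho_j:=\tau_{N_j}\). These form a \(\tfrac12\)-sparse sequence with respect to \(\mu_\sigma\), and \(f_k\le 2f_{N_j}\) on \(\{N_j\le k<N_{j+1}\}\). Terms with \(k<N_0\) vanish because \(f_k=0\) there. Every other \(k\) lies in exactly one block \(\{N_j\le k<N_{j+1}\}\), and on that block \(\tau_k\ge\rho_j\) and \(E_k\subseteq\{\rho_j<\infty\}\). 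Grouping by blocks therefore gives the pointwise bound
\[
\mathcal A^r_{\mathcal S}(f\sigma)^r\le 2^r\sum_{j=0}^\infty f_{N_j}^r\mathbf 1_{\{\rho_j<\infty\}}\mathcal H_{\rho_j}.
\]

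Second, since \(p/r\le1\), the map \(t\mapsto t^{p/r}\) is subadditive. Hence
\[
\mathcal A^r_{\mathcal S}(f\sigma)^p\le 2^p\sum_j f_{N_j}^p\mathbf 1_{\{\rho_j<\infty\}}\mathcal H_{\rho_j}^{p/r}.
\]
We integrate this against \(\omega\,\mathrm d\mu\). The factor \(f_{N_j}^p\mathbf 1_{\{\rho_j<\infty\}}\) is \(\mathcal F_{\rho_j}\)-measurable, so we may replace \(\mathcal H_{\rho_j}^{p/r}\omega\) by \(\E[\mathcal H_{\rho_j}^{p/r}\omega\mid\mathcal F_{\rho_j}]\). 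Taking \(\tau=\rho_j\) in \eqref{eq: mathfrak tr definition}, this conditional expectation is at most \(\mathfrak T_r^{p/r}\E[\sigma\mid\mathcal F_{\rho_j}]\) on \(\{\rho_j<\infty\}\). Undoing the conditioning turns \(\E[\sigma\mid\mathcal F_{\rho_j}]\,\mathrm d\mu\) back into \(\mathrm d\mu_\sigma\), which yields
\[
\|\mathcal A^r_{\mathcal S}(f\sigma)\|_{L^p_\omega}^p\le 2^p\mathfrak T_r^{p/r}\sum_j\int_{\{\rho_j<\infty\}}\E_\sigma[f\mid\mathcal F_{\rho_j}]^p\,\mathrm d\mu_\sigma\lesssim_p \mathfrak T_r^{p/r}\|f\|_{L^p_\sigma}^p.
\]
The last inequality is Lemma~\ref{lemma: carleson embedding} with \(\eta=\tfrac12\). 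Taking \(r/p\)-th powers gives the claim.

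The main obstacle is the justification of the conditioning step rather than any estimate. We must check that the sum over \(k\) in a block is dominated by the tail sum \(\mathcal H_{\rho_j}\), which holds because the tail contains all \(k\) with \(\tau_k\ge\rho_j\). We must also check that \(f_{N_j}\mathbf 1_{\{\rho_j<\infty\}}\) is \(\mathcal F_{\rho_j}\)-measurable, which follows from \(f_{N_j}=\E_\sigma[f\mid\mathcal F_{\rho_j}]\) via the stopped conditional expectation identities used in the proof of Lemma~\ref{lem:weighted-principal-stopping-sequence}. Finally, conditioning on \(\mathcal F_{\rho_j}\) requires \(\sigma\)-finiteness of \(\mu\) and \(\mu_\omega\) on \(\mathcal F_{\rho_j}\); this is guaranteed by the standing assumptions, if necessary after truncating \(f\) and using monotone convergence.
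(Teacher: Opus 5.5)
Your proposal is correct and follows essentially the same route as the paper's proof. You apply Lemma~\ref{lem:weighted-principal-stopping-sequence} along $\{\tau_k\}$, dominate each block by the tail $\mathcal H_{\rho_j}$, use subadditivity of $t\mapsto t^{p/r}$, condition on $\mathcal F_{\rho_j}$ to invoke $\mathfrak T_r$, and finish with Lemma~\ref{lemma: carleson embedding} for the $\tfrac12$-sparse sequence $\{\rho_j\}$; the only cosmetic difference is that you bound the blocks at the level of $r$-th powers before applying subadditivity, whereas the paper applies subadditivity to the blocks first.
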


\begin{proof}
Let \(f\in L^p_\sigma\) be nonnegative.
Apply Lemma~\ref{lem:weighted-principal-stopping-sequence} to
\(f\), \(\sigma\), and the sequence \(\{\tau_j\}_{j=0}^\infty\).
Let \(\{N_m\}_{m=0}^\infty\) be the resulting principal indices and put
\(
\rho_m:=\tau_{N_m}.
\)
Since \(p/r\le1\), subadditivity  gives
\begin{align*}
\mathcal A_{\mathcal S}^r(f\sigma)^p
&\le
\sum_{m=0}^\infty
\Bigl(
\sum_{j=0}^\infty
\E_\sigma[f\mid\mathcal F_{\tau_j}]^r
\sigma_j^r\mathbf 1_{E_j}
\mathbf 1_{\{N_m\le j<N_{m+1}\}}
\Bigr)^{p/r}
\\
&\le
2^p
\sum_{m=0}^\infty
\mathbf 1_{\{\rho_m<\infty\}}
\E_\sigma[f\mid\mathcal F_{\rho_m}]^p
\mathcal H_{\rho_m}^{p/r}.
\end{align*}
By the definition of \(\mathfrak T_r\),
\[
\E[\mathcal H_{\rho_m}^{p/r}\omega
\mid\mathcal F_{\rho_m}]
\le
\mathfrak T_r^{p/r}
\E[\sigma\mid\mathcal F_{\rho_m}].
\]
Since
\(
\mathbf 1_{\{\rho_m<\infty\}}
\E_\sigma[f\mid\mathcal F_{\rho_m}]^p
\)
is \(\mathcal F_{\rho_m}\)-measurable, conditioning each summand gives
\begin{align*}
\|\mathcal A_{\mathcal S}^r(f\sigma)\|_{L^p_\omega}^p
&\le
2^p
\sum_{m=0}^\infty
\E\Bigl[
\mathbf 1_{\{\rho_m<\infty\}}
\E_\sigma[f\mid\mathcal F_{\rho_m}]^p
\mathcal H_{\rho_m}^{p/r}\omega
\Bigr]
\\
&\le
2^p\mathfrak T_r^{p/r}
\sum_{m=0}^\infty
\int_{\{\rho_m<\infty\}}
\E_\sigma[f\mid\mathcal F_{\rho_m}]^p
\,\mathrm d\mu_\sigma
\lesssim_p
\mathfrak T_r^{p/r}\|f\|_{L^p_\sigma}^p,
\end{align*}
where the last estimate follows from Lemma~\ref{lemma: carleson embedding} applied to \(\{\rho_m\}_{m=0}^\infty\), which is \(\tfrac12\)-sparse with respect to \(\mu_\sigma\).
\end{proof}

\section{Continuous-time testing estimates}
\label{sec:testing-to-characteristics}

In this section we estimate the direct and dual testing constants in terms of the \(A_p\) and \(A_\infty\) characteristics. We follow the dyadic argument of \cite[Section~3]{hytonen_weak_2017}, using a conditional principal lemma and sparse packing estimates for general filtrations. 

Together with Propositions~\ref{prop:case-p-greater-r} and \ref{prop:case-p-less-r},
the following estimates imply Theorem~\ref{thm: sparse operators ap ainfty bound} and the case \(r<p\) of Theorem~\ref{thm: sparse operators weak type bound}.

\begin{proposition}\label{prop: testing conditions bounded by characteristics}
Let $1<p<\infty$, let $r>0$, and let
$\mathcal S=\{\tau_j\}_{j=0}^\infty$ be an $\eta$-sparse sequence of stopping
times. Let $\sigma,\omega$ be weights, and let $\mathfrak T_r$ and, when
$r<p$, $\mathfrak T_r^*$ be defined by
\eqref{eq: mathfrak tr definition} and
\eqref{eq: mathfrak tr definition 2}. Then
\begin{align*}
\mathfrak T_r
&\lesssim_{p,r,\eta}
[\omega,\sigma]_{A_p}^{r/p}[\sigma]_{A_\infty}^{r/p},
\\
\mathfrak T_r^*
&\lesssim_{p,r,\eta}
[\omega,\sigma]_{A_p}^{r/p}[\omega]_{A_\infty}^{1-r/p},
\qquad r<p.
\end{align*}
\end{proposition}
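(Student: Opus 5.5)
We follow the dyadic argument of \cite[Section~3]{hytonen_weak_2017}, working conditionally on \(\mathcal F_\tau\) for a fixed stopping time \(\tau\). By the tower property and since \(\mathbf 1_{\{\tau_k\ge\tau\}}\) is \(\mathcal F_{\tau_k}\)-measurable, it suffices to bound \(\E[\mathcal H_\tau^{p/r}\omega\mid\mathcal F_\tau]\) by \(C[\omega,\sigma]_{A_p}[\sigma]_{A_\infty}\E[\sigma\mid\mathcal F_\tau]\) on \(\{\tau<\infty\}\). The dual constant needs the analogous bound with \((p/r)'\), \(\mathcal H^*_\tau\) and \(\omega\) in place of \(\sigma\).

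\textbf{Direct testing constant.} First, we run a principal stopping on the averages \(\sigma_k\) along the indices \(k\) with \(\tau_k\ge\tau\). This is Lemma~\ref{lem:weighted-principal-stopping-sequence} with \(\mu\) in place of \(\mu_\sigma\), applied to \(\sigma\), starting at the first index with \(\tau_k\ge\tau\). It produces indices \(N_m\) and stopping times \(\rho_m:=\tau_{N_m}\) such that \(\sigma_k\le 2\sigma_{N_m}\) on \(\{N_m\le k<N_{m+1}\}\), while the values \(\sigma_{N_m}\) at least double along \(m\).

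Within one principal block, \(\eta\)-sparseness gives geometric decay of the conditional \(\mu\)-measure of \(E_k\cap E_{N_m}\). Hence the number of active indices \(k\) in the block, denoted \(n_m\), has all conditional moments bounded on \(E_{N_m}\), that is, \(\E[n_m^s\mid\mathcal F_{\rho_m}]\lesssim_{s,\eta}1\). We would like the block contributions to be bounded by \(\sigma_{N_m}^r n_m\).

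\textbf{Main obstacle.} The conditioning is with respect to \(\omega\), not \(\mu\), so the counting is the delicate point. We handle it in two steps.
\begin{itemize}
\item First, we use that \(\mathcal H_\tau^{p/r}\) is controlled by \(\sum_m\sigma_{N_m}^p n_m^{p/r}\). When \(p\ge r\) this uses the geometric growth of \(\sigma_{N_m}\); when \(p<r\) it uses subadditivity.
\item Second, for each summand we condition on \(\mathcal F_{\rho_m}\) and apply the \(A_p\) characteristic in the form \(\sigma_{N_m}^{p-1}\E[\omega\mid\mathcal F_{\rho_m}]\le[\omega,\sigma]_{A_p}\).
\end{itemize}
To absorb the factor \(n_m^{p/r}\) under \(\omega\), we would rather avoid \(\omega\)-weighted counting. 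Instead, we sum \(\sum_{k\text{ in block}}\sigma_k^{p}\) directly: each active \(k\) contributes \(\sigma_k^{p-1}\E[\omega\mid\mathcal F_{\tau_k}]\sigma_k\) after conditioning at \(\mathcal F_{\tau_k}\). The \(\eta\)-sparse sets \(E_k\setminus E_{k+1}\) then convert the \(\mu\)-count into an integral. This gives
\[
\E[\mathcal H_\tau^{p/r}\omega\mid\mathcal F_\tau]\lesssim[\omega,\sigma]_{A_p}\,\E\Bigl[\sum_m \sigma_{N_m}\mathbf 1_{\{\rho_m<\infty\}}\Bigm|\mathcal F_\tau\Bigr].
\]

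Finally, \(\sum_m\sigma_{N_m}\mathbf 1_{\{\rho_m<\infty\}}\) is dominated by \(\eta^{-1}\sum_m\sigma_{N_m}\mathbf 1_{E_{N_m}\setminus E_{N_{m+1}}}\), up to the sparseness of \(\rho_m\) with respect to \(\mu\), which follows from the doubling construction as in Lemma~\ref{lem:weighted-principal-stopping-sequence}. In turn this is at most \(2\eta^{-1}M_\tau\sigma\), since the doubling makes the sum geometric. Taking conditional expectations and using the definition of \([\sigma]_{A_\infty}\) gives the bound \([\sigma]_{A_\infty}\E[\sigma\mid\mathcal F_\tau]\), which proves the estimate for \(\mathfrak T_r\).

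\textbf{Dual testing constant.} For \(\mathfrak T_r^*\) we set \(s'=(p/r)'\) and use Hölder in the form \(\sigma_k^{r-1}\E[\omega\mid\mathcal F_{\tau_k}]=(\sigma_k^{p-1}\omega_k)^{r/p}\omega_k^{1-r/p}\sigma_k^{-1}\le[\omega,\sigma]_{A_p}^{r/p}\omega_k^{1-r/p}\sigma_k^{-1}\). We then perform the principal stopping on \(\omega_k\) instead of \(\sigma_k\). The same block-counting and packing argument with \(M_\tau\omega\) then yields the factor \([\omega]_{A_\infty}^{1/s'}=[\omega]_{A_\infty}^{1-r/p}\).

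The step I expect to be hardest is the \(s'\)-th power counting when \(s'\) is large. There I would first try the weighted Carleson embedding Lemma~\ref{lemma: carleson embedding} along the principal sequence, rather than a crude counting bound.
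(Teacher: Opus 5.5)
\textbf{Gap in the case $r<p$.} Your argument works when $r\ge p$. There, subadditivity gives $\mathcal H_\tau^{p/r}\le\sum_k\sigma_k^p\mathbf 1_{E_k}\mathbf 1_{\{\tau_k\ge\tau\}}$, and the $A_p$ condition gives $\E[\sigma_k^p\mathbf 1_{E_k}\omega\mid\mathcal F_{\tau_k}]\le[\omega,\sigma]_{A_p}\sigma_k\mathbf 1_{E_k}$. The linear packing argument behind \eqref{eq: linear sparse sum controlled by Ainfty}, run at $\tau$ itself, then finishes; no principal stopping is needed, and this is simpler than the paper's reduction of $r\ge p$ to $r=p-1$.

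The case $r<p$ is the essential one, and the only one in which $\mathfrak T_r^*$ is defined. There the argument has a genuine gap. When $p/r>1$ you cannot pass from $\mathcal H_\tau^{p/r}$ to a linear sum, since $(\sum_k a_k)^{p/r}\ge\sum_k a_k^{p/r}$. Your intermediate bound $\mathcal H_\tau^{p/r}\lesssim\sum_m\sigma_{N_m}^pn_m^{p/r}$ is also false pointwise. Take $\sigma_k=\sigma_{N_m}=3^m$ on block $m$ and $n_m\approx3^{r(M-m)}$ for $m\le M$. The left side is $\approx M^{p/r}3^{Mp}$, while the right side is $\approx M\,3^{Mp}$.

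Replacing $\sigma_{N_m}^pn_m^{p/r}$ by $\sum_{k\in\text{block}}\sigma_k^p$ then discards the factor $n_m^{p/r-1}$. Integrated against $\omega$, that factor is exactly the $\omega$-weighted counting you identified as the obstacle. It is not avoided, only dropped. Bounding it by an $\omega$-counting estimate such as Lemma~\ref{lem:weak-counting} would bring in $[\omega]_{A_\infty}$, which must not appear in the direct bound. So your display $\E[\mathcal H_\tau^{p/r}\omega\mid\mathcal F_\tau]\lesssim[\omega,\sigma]_{A_p}\E[\sum_m\sigma_{N_m}\mathbf 1_{\{\rho_m<\infty\}}\mid\mathcal F_\tau]$ is unproved for $r<p$.

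\textbf{What the paper uses instead.} Two ingredients are missing.

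First, the conditional principal lemma, Lemma~\ref{lemma: principal lemma conditional version}, applied under $\mu_\omega$, linearizes the power. With $S_k=\sum_{\ell\ge k}\sigma_\ell^r\mathbf 1_{E_\ell}$ it gives $\E[S_j^{p/r}\omega\mid\mathcal F_{\tau_j}]\lesssim\E[\sum_{k\ge j}\sigma_k^r\mathbf 1_{E_k}(\E_\omega[S_k\mid\mathcal F_{\tau_k}])^{(p-r)/r}\omega\mid\mathcal F_{\tau_j}]$.

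Second, the $\omega$-weighted tail $\E_\omega[S_k\mid\mathcal F_{\tau_k}]$ is controlled without any $A_\infty$ assumption on $\omega$. One writes $\sigma_\ell^r\omega_\ell\le[\omega,\sigma]_{A_p}^\lambda\sigma_\ell^{r-\lambda(p-1)}\omega_\ell^{1-\lambda}$ with $\lambda=\min\{1,\frac r{p-1},\frac r{p-r}\}$, so the remaining exponents sum to less than one. Conditional H\"older and $\mu$-sparseness then give geometric decay by Lemma~\ref{lemma: two weight sparse power packing}, which is \eqref{eq: common conditional tail estimate}. Substituting leaves $[\omega,\sigma]_{A_p}\E[\sum_{k\ge j}\sigma_k\mathbf 1_{E_k}\mid\mathcal F_{\tau_j}]$, and only at this point does the linear $[\sigma]_{A_\infty}$ packing enter.

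\textbf{Dual part.} It inherits the same gap, and in addition the algebra is off. One has $(\sigma_k^{p-1}\omega_k)^{r/p}\omega_k^{1-r/p}\sigma_k^{-1}=\sigma_k^{r-r/p-1}\omega_k$, so the correct identity carries the factor $\sigma_k^{r/p-1}$. This is a negative power of $\sigma_k$, which stopping on $\omega_k$ does not control. The paper instead runs the same principal lemma under $\mu_\sigma$ with exponent $(p/r)'$. It uses $\E[S_k^*\sigma\mid\mathcal F_{\tau_k}]=\E[S_k\omega\mid\mathcal F_{\tau_k}]$ to reuse the same tail estimate, and the linear packing for $\omega$ then gives $[\omega,\sigma]_{A_p}^{r/(p-r)}[\omega]_{A_\infty}\omega_j$. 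Taking the $(p-r)/p$ power yields the stated bound for $\mathfrak T_r^*$.
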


{For the testing estimates below, fix \(p,r,\mathcal S,\sigma,\omega\) as in Proposition~\ref{prop: testing conditions bounded by characteristics} and write}
\[
\sigma_k:=\E[\sigma\mid\mathcal F_{\tau_k}],
\qquad
\omega_k:=\E[\omega\mid\mathcal F_{\tau_k}],
\qquad
E_k:=\{\tau_k<\infty\}.
\]
\subsection{Conditional Principal Lemma}\label{sec:principal-lemma}

The testing constants involve powers of sums over the sparse sequence. To handle them, we use a conditional version of the principal lemma of Cascante--Ortega--Verbitsky \cite[Proposition~2.2]{COV2004}. We derive it from the martingale version of Tanaka and Terasawa \cite[Lemma~2.3]{tanaka_positive_2013}. Throughout this subsection, conditional expectations are taken with respect to \(\nu\).

\begin{lemma}[{\cite[Lemma~2.3]{tanaka_positive_2013}}]
\label{lemma: principal lemma expectation version}
Let \((\Omega,\nu,\mathcal F,(\mathcal F_k)_{k\in\mathbb Z})\) be a filtered measure space. Assume that \(\nu|_{\mathcal F_k}\) is \(\sigma\)-finite for every \(k\). 
Let \(\{\alpha_k\}_{k\in\mathbb Z}\) be bounded nonnegative \(\mathcal F_k\)-measurable functions, and set \(\widetilde\alpha_k:=\sum_{\ell\ge k}\alpha_\ell\). 
If \(s>1\) and \(w\ge0\) is measurable and both integrals below are finite, then
\begin{equation*}
\int_\Omega
\Bigl(\sum_{k\in\mathbb Z}\alpha_k\E[w\mid\mathcal F_k]\Bigr)^s
\,\mathrm d\nu
\simeq_s
\int_\Omega\sum_{k\in\mathbb Z}
\alpha_k\E[w\mid\mathcal F_k]
\bigl(\E[\widetilde\alpha_k w\mid\mathcal F_k]\bigr)^{s-1}
\,\mathrm d\nu.
\end{equation*}
\end{lemma}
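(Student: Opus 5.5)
The proof of Lemma~\ref{lemma: principal lemma expectation version} compares both sides with the pathwise tail-sum quantity $\int\sum_k a_k F_k^{s-1}\,\mathrm d\nu$, where
\[
a_k:=\alpha_k\E[w\mid\mathcal F_k],\qquad F:=\sum_k a_k,\qquad F_k:=\sum_{\ell\ge k}a_\ell,\qquad G_k:=\E[\widetilde\alpha_k w\mid\mathcal F_k].
\]
The first observation is that $G_k=\E[F_k\mid\mathcal F_k]$. Indeed, for $\ell\ge k$ the function $\alpha_\ell$ is $\mathcal F_\ell$-measurable, so the tower property gives $\E[\alpha_\ell\E[w\mid\mathcal F_\ell]\mid\mathcal F_k]=\E[\alpha_\ell w\mid\mathcal F_k]$. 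Summing over $\ell\ge k$ by monotone convergence, using $\sigma$-finiteness of $\nu|_{\mathcal F_k}$, identifies $G_k$ as the conditional expectation of the tail $F_k$. The second observation is the elementary pathwise inequality, valid for nonnegative summable sequences and $s>1$,
\[
\Bigl(\sum_k a_k\Bigr)^s\simeq_s\sum_k a_k\Bigl(\sum_{\ell\ge k}a_\ell\Bigr)^{s-1}.
\]
This follows by telescoping $F_k^s-F_{k+1}^s$ and the mean value theorem, which gives the bound $\le s\sum_k a_kF_k^{s-1}$; the reverse bound holds since $F_k\le F$. Hence $\int F^s\,\mathrm d\nu\simeq_s\int\sum_k a_kF_k^{s-1}\,\mathrm d\nu$.

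\emph{Lower bound ($\gtrsim$).} Let $M^*F:=\sup_k\E[F\mid\mathcal F_k]$ be Doob's maximal function. Since $F_k\le F$, we have $G_k\le M^*F$. Therefore
\[
\int\sum_k a_kG_k^{s-1}\,\mathrm d\nu\le\int F\,(M^*F)^{s-1}\,\mathrm d\nu\le\|F\|_{L^s}\|M^*F\|_{L^s}^{s-1}\lesssim_s\int F^s\,\mathrm d\nu,
\]
using Hölder's inequality and Doob's $L^s$ inequality. The latter applies because the filtration is $\sigma$-finite and $F\in L^s$ by the finiteness assumption.

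\emph{Upper bound ($\lesssim$).} Since $a_k$ is $\mathcal F_k$-measurable, $\int a_kF_k^{s-1}\,\mathrm d\nu=\int a_k\E[F_k^{s-1}\mid\mathcal F_k]\,\mathrm d\nu$.
\begin{itemize}
\item If $1<s\le2$, conditional Jensen applied to the concave map $t\mapsto t^{s-1}$ gives $\E[F_k^{s-1}\mid\mathcal F_k]\le G_k^{s-1}$, and this direction is done.
\item If $s>2$, Jensen points the wrong way, and this is the main obstacle. I would proceed by induction on $\lceil s\rceil$, using the elementary bound $F_k^{s-1}\lesssim_s\sum_{\ell\ge k}a_\ell F_\ell^{s-2}$ (the same telescoping with exponent $s-1>1$). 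Conditioning on $\mathcal F_k$ and exchanging sums, the quantity to control becomes
\[
\int\sum_\ell a_\ell F_\ell^{s-2}\Bigl(\sum_{k\le\ell}a_k\Bigr)\,\mathrm d\nu\le\int\sum_\ell a_\ell F_\ell^{s-2}F\,\mathrm d\nu.
\]
I would then split $F=\sum_{k<\ell}a_k+F_\ell$. The second piece reproduces the original quantity $\int\sum_\ell a_\ell F_\ell^{s-1}$. For the first piece, I would apply Hölder's inequality with exponents $\frac{s-1}{s-2}$ and $s-1$ against the measure $\sum_\ell a_\ell\,\mathrm d\nu$. This bounds it by $\bigl(\int\sum_\ell a_\ell F_\ell^{s-1}\bigr)^{(s-2)/(s-1)}$ times a term which, after conditioning at level $\ell$ and using $\E[F_\ell\mid\mathcal F_\ell]=G_\ell$, is expected to be controlled by the right-hand side $\int\sum_\ell a_\ell G_\ell^{s-1}$.
\end{itemize}
Finiteness of both integrals then allows absorbing the first factor, the original quantity, into the left-hand side. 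I expect the bookkeeping in this absorption, in particular keeping the constants $s$-dependent only and justifying the interchange of sums for unbounded index sets via truncation to finitely many $k$ and monotone convergence, to be the delicate part. If it fails, the fallback is to cite \cite[Lemma~2.3]{tanaka_positive_2013} directly, whose hypotheses match the statement verbatim.
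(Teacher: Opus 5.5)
The paper does not prove this lemma; it quotes it from \cite[Lemma~2.3]{tanaka_positive_2013}. Your fallback is therefore exactly the paper's route. The parts of your own argument that you complete are correct:
\begin{itemize}
\item the identity $G_k=\E[F_k\mid\mathcal F_k]$;
\item the pathwise comparison;
\item the bound of the right-hand side by the left-hand side via Doob's inequality;
\item the reverse bound for $1<s\le 2$ via conditional Jensen.
\end{itemize}

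\textbf{The gap.} It lies in the reverse bound for $s>2$. This is precisely the case the paper needs, with exponents $p/r$, $2p/r$ and $p>2$. Set $X:=\int\sum_k a_kF_k^{s-1}\,\mathrm d\nu$ and $H_\ell:=\sum_{k<\ell}a_k$. Your chain yields
\[
X\le C\int\sum_\ell a_\ell F_\ell^{s-2}H_\ell\,\mathrm d\nu+C\,X,
\]
where $C$ is the constant in the pathwise bound $F_k^{s-1}\le C\sum_{\ell\ge k}a_\ell F_\ell^{s-2}$. This constant cannot be smaller than $s-1>1$; take many small equal increments to see this. So your ``second piece'' reappears linearly with coefficient exceeding $1$, and no finiteness assumption lets you absorb it. The loss occurs when you bound $\sum_{k\le\ell}a_k$ by $F$, which reinserts the full tail $F_\ell$.

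There is a second problem. The H\"older partner of your first piece is $\int\sum_\ell a_\ell H_\ell^{s-1}\,\mathrm d\nu$. It contains only heads, which are already $\mathcal F_{\ell-1}$-measurable. So ``conditioning at level $\ell$ and using $\E[F_\ell\mid\mathcal F_\ell]=G_\ell$'' has nothing to act on.

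\textbf{The fix: condition the tail, not the head.} After exchanging sums, $a_\ell H_{\ell+1}$ is $\mathcal F_\ell$-measurable, so
\[
\int F^s\,\mathrm d\nu\le s(s-1)\int\sum_\ell a_\ell H_{\ell+1}\,\E\bigl[F_\ell^{s-2}\mid\mathcal F_\ell\bigr]\,\mathrm d\nu .
\]
For $2<s\le 3$, your first observation and Jensen give $\E[F_\ell^{s-2}\mid\mathcal F_\ell]\le G_\ell^{s-2}$. Only now use $H_{\ell+1}\le F$, and apply H\"older with respect to $\sum_\ell a_\ell\,\mathrm d\nu$ with exponents $s-1$ and $\frac{s-1}{s-2}$. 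Since $\int\sum_\ell a_\ell F^{s-1}\,\mathrm d\nu=\int F^s\,\mathrm d\nu$, this gives
\[
\int F^s\,\mathrm d\nu\le s(s-1)\Bigl(\int F^s\,\mathrm d\nu\Bigr)^{1/(s-1)}\Bigl(\int\sum_\ell a_\ell G_\ell^{s-1}\,\mathrm d\nu\Bigr)^{(s-2)/(s-1)}.
\]
Here $\int F^s$ appears only with power $1/(s-1)<1$, so it can be absorbed using finiteness.

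For general $s>2$, set $n:=\lceil s\rceil-2$, so that $s-1-n\in(0,1]$. Iterate the pathwise telescoping on the tail until
\[
F^s\le C_s\sum_m a_mH_{m+1}^{n}F_m^{s-1-n}.
\]
Then condition on $\mathcal F_m$, apply Jensen, bound $H_{m+1}\le F$, and use H\"older with exponents $\frac{s-1}{n}$ and $\frac{s-1}{s-1-n}$. The left-hand side again reappears only with power $n/(s-1)<1$ and can be absorbed.
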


We need the same equivalence after conditioning on an earlier \(\sigma\)-algebra.

\begin{lemma}
\label{lemma: principal lemma conditional version}
Fix \(j\in\mathbb Z\). Let \((\Omega,\nu,\mathcal F)\) be a measure space
equipped with a filtration \((\mathcal F_k)_{k\ge j}\), and assume that
\(\nu|_{\mathcal F_k}\) is \(\sigma\)-finite for every \(k\ge j\). Let
\(\{\alpha_k\}_{k\ge j}\) be nonnegative
\(\mathcal F_k\)-measurable functions, and set
\(\widetilde\alpha_k:=\sum_{\ell\ge k}\alpha_\ell\). Then, for every \(s>1\) and
every measurable \(w\ge0\),
\begin{equation*}
\E\Bigl[
\Bigl(\sum_{k\ge j}\alpha_k\E[w\mid\mathcal F_k]\Bigr)^s
\mathrel{\Big|}\mathcal F_j
\Bigr]
\simeq_s
\E\Bigl[
\sum_{k\ge j}\alpha_k\E[w\mid\mathcal F_k]
\bigl(\E[\widetilde\alpha_k w\mid\mathcal F_k]\bigr)^{s-1}
\mathrel{\Big|}\mathcal F_j
\Bigr]
\end{equation*}
\(\nu\)-almost everywhere.
\end{lemma}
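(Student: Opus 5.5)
The plan is to derive the conditional statement from the unconditional Lemma~\ref{lemma: principal lemma expectation version} by testing against indicators of sets in \(\mathcal F_j\). Two \(\mathcal F_j\)-measurable nonnegative functions \(U,V\) satisfy \(U\lesssim_s V\) \(\nu\)-a.e. as soon as \(\int_A U\,\mathrm d\nu\lesssim_s\int_A V\,\mathrm d\nu\) for every \(A\in\mathcal F_j\) with \(\nu(A)<\infty\) (and, if needed, on which \(V\) is bounded). Here \(\sigma\)-finiteness of \(\nu|_{\mathcal F_j}\) makes this reduction valid. So I fix \(A\in\mathcal F_j\) of finite measure and aim to show
\[
\int_A\Bigl(\sum_{k\ge j}\alpha_k\E[w\mid\mathcal F_k]\Bigr)^s\,\mathrm d\nu
\simeq_s
\int_A\sum_{k\ge j}\alpha_k\E[w\mid\mathcal F_k]\bigl(\E[\widetilde\alpha_k w\mid\mathcal F_k]\bigr)^{s-1}\,\mathrm d\nu .
\]

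The key observation is that \(A\in\mathcal F_j\subseteq\mathcal F_k\) for all \(k\ge j\). Hence \(\mathbf 1_A\) passes through every conditional expectation appearing, and we may replace \(\alpha_k\) by \(\alpha_k':=\mathbf 1_A\alpha_k\). These are still \(\mathcal F_k\)-measurable, and \(\widetilde{\alpha}'_k=\mathbf 1_A\widetilde\alpha_k\). Since \(\mathbf 1_A\) is \(\{0,1\}\)-valued, both integrands restricted to \(A\) coincide with the integrands built from \(\alpha'_k\) over all of \(\Omega\). To fit the index set \(\mathbb Z\) of Lemma~\ref{lemma: principal lemma expectation version}, I extend the filtration by \(\mathcal F_k:=\mathcal F_j\) and \(\alpha'_k:=0\) for \(k<j\). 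This extension keeps \(\sigma\)-finiteness and does not change either side.

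Lemma~\ref{lemma: principal lemma expectation version} requires bounded \(\alpha_k\) and finite integrals, so the next step is a truncation and monotone convergence argument. I replace \(\alpha'_k\) by \(\alpha_k^{(N)}:=\min(\alpha'_k,N)\mathbf 1_{\{j\le k\le j+N\}}\) and \(w\) by \(w_N:=\min(w,N)\mathbf 1_{\Omega_N}\), where \(\Omega_N\in\mathcal F_j\) increase to \(\Omega\) with finite measure (after intersecting with \(A\)). Then both sides are finite: the left side by \(\E[w_N\mid\mathcal F_k]\le N\) and \(\nu(A\cap\Omega_N)<\infty\), the right side similarly. Applying the lemma gives the equivalence with constants depending only on \(s\).

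The main obstacle is justifying the passage \(N\to\infty\) on both sides simultaneously. Each truncated quantity is monotone increasing in \(N\): \(\alpha_k^{(N)}\) increase, \(\widetilde\alpha_k^{(N)}\) increase, and conditional expectations are monotone with the monotone convergence theorem valid for them. Therefore both integrands increase pointwise to the untruncated ones, and monotone convergence yields the two-sided inequality on \(A\), including the case where both sides are infinite. Since \(A\) was arbitrary, the claim holds \(\nu\)-almost everywhere.
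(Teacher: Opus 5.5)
Your proposal is correct and follows essentially the same route as the paper. Both arguments truncate to bounded data with finitely many nonzero \(\alpha_k\), extend the filtration below \(j\), and localize to a set \(A\in\mathcal F_j\) with \(\nu(A)<\infty\). They then apply Lemma~\ref{lemma: principal lemma expectation version} and pass to the conditional statement via \(\sigma\)-finiteness of \(\nu|_{\mathcal F_j}\). The only cosmetic difference is that the paper attaches \(\mathbf 1_A\) to \(w\) rather than to the \(\alpha_k\), which is equivalent because \(A\in\mathcal F_k\). Your explicit treatment of the monotone limit and of possibly infinite conditional expectations fills in details the paper leaves implicit.
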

\begin{proof}
By truncation and conditional monotone convergence, it suffices to assume that \(w\) and the sequence \(\{\alpha_k\}_{k\ge j}\) are bounded and that only finitely many \(\alpha_k\) are nonzero. Extend the filtration by setting \(\mathcal F_k=\mathcal F_j\) and \(\alpha_k:=0\) for \(k<j\).

Let \(A\in\mathcal F_j\) with \(\nu(A)<\infty\). Since \(A\in\mathcal F_j\subseteq\mathcal F_k\), for \(k\ge j\)

\[
\E[w\mathbf1_A\mid\mathcal F_k]
=
\mathbf1_A\E[w\mid\mathcal F_k],
\qquad
\E[\widetilde\alpha_k w\mathbf1_A\mid\mathcal F_k]
=
\mathbf1_A\E[\widetilde\alpha_k w\mid\mathcal F_k].
\]
Applying Lemma~\ref{lemma: principal lemma expectation version} to \(w\mathbf1_A\) gives the desired equivalence after integration over \(A\).
Since \(\nu|_{\mathcal F_j}\) is \(\sigma\)-finite, this proves the conditional equivalence.
\end{proof}
\subsection{Sparse packing estimates and stopping-time reduction}

We first prove two packing estimates: one for products whose exponents sum to less than one, and one for a linear sum controlled by the \(A_\infty\) characteristic. These are continuous-time counterparts of the dyadic packing estimates in \cite[Lemma~4.2]{fackler_off-diagonal_2018}, see also \cite[Lemma~5.2]{HytonenA2remarks}.
\begin{lemma}
\label{lemma: two weight sparse power packing}
Let \(\{\tau_k\}_{k=0}^\infty\) be an \(\eta\)-sparse sequence of stopping
times, and set \(E_k:=\{\tau_k<\infty\}\). Let \(u,v\ge0\) be measurable,
let \(\beta,\gamma\ge0\) satisfy \(\beta+\gamma<1\), and let \(j\ge 0\). Then
\begin{equation}
\label{eq: two weight sparse power packing}
\E\Bigl[\sum_{\ell\ge j}
\E[u\mid\mathcal F_{\tau_\ell}]^\beta
\E[v\mid\mathcal F_{\tau_\ell}]^\gamma
\mathbf 1_{E_\ell}\mid\mathcal F_{\tau_j}\Bigr]
\lesssim_{\eta,\beta,\gamma}
\E[u\mid\mathcal F_{\tau_j}]^\beta
\E[v\mid\mathcal F_{\tau_j}]^\gamma.
\end{equation}
\(\mu\)-almost everywhere. Moreover, if \(u\) is a weight, then
\begin{equation}
\label{eq: linear sparse sum controlled by Ainfty}
\E\Bigl[\sum_{k\ge j}\E[u\mid\mathcal F_{\tau_k}]\,\mathbf 1_{E_k}\mid\mathcal F_{\tau_j}\Bigr]
\le
\eta^{-1}[u]_{A_\infty}\E[u\mid\mathcal F_{\tau_j}]
\end{equation}
\(\mu\)-almost everywhere.
\end{lemma}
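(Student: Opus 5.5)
For the linear estimate \eqref{eq: linear sparse sum controlled by Ainfty}, the plan is to use sparseness to pass from the sets $E_k$ to the disjoint pieces $E_k\setminus E_{k+1}$, and then to recognise a stopped maximal function. Write $u_k:=\E[u\mid\mathcal F_{\tau_k}]$.

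By \eqref{eq: sparse property prelim} with $\nu=\mu$, we have $\mathbf 1_{E_k}\le \eta^{-1}\E[\mathbf 1_{E_k\setminus E_{k+1}}\mid\mathcal F_{\tau_k}]$. Since $u_k$ is $\mathcal F_{\tau_k}$-measurable and $\mathcal F_{\tau_j}\subseteq\mathcal F_{\tau_k}$ for $k\ge j$, the tower property gives
\[
\E[u_k\mathbf 1_{E_k}\mid\mathcal F_{\tau_j}]\le \eta^{-1}\E[u_k\mathbf 1_{E_k\setminus E_{k+1}}\mid\mathcal F_{\tau_j}].
\]
The sets $E_k\setminus E_{k+1}$ are pairwise disjoint. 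On $E_k$ we have $\tau_k\ge\tau_j$ and $\tau_k<\infty$, so $u_k\le M_{\tau_j}u$ there. Summing over $k\ge j$ and using conditional monotone convergence yields
\[
\E\Bigl[\sum_{k\ge j}u_k\mathbf 1_{E_k}\mid\mathcal F_{\tau_j}\Bigr]\le\eta^{-1}\E[\mathbf 1_{E_j}M_{\tau_j}u\mid\mathcal F_{\tau_j}].
\]
The definition of $[u]_{A_\infty}$ with $\tau=\tau_j$ bounds the right-hand side on $E_j$ by $\eta^{-1}[u]_{A_\infty}u_j$. Off $E_j$ both sides vanish, because $E_j\in\mathcal F_{\tau_j}$.

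For the power estimate \eqref{eq: two weight sparse power packing}, write $u_\ell,v_\ell$ for the conditional expectations at time $\tau_\ell$. The plan is to split the sum according to the size of the product relative to its value at time $\tau_j$, and to use sparseness to get geometric decay of the level sets.

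First I will reduce to the case $\beta+\gamma>0$, since $\beta=\gamma=0$ is just $\sum_\ell\E[\mathbf 1_{E_\ell}\mid\mathcal F_{\tau_j}]\le\sum_\ell(1-\eta)^{\ell-j}\mathbf 1_{E_j}$, obtained by iterating sparseness. Fix the $\mathcal F_{\tau_j}$-measurable quantity $\Lambda:=u_j^\beta v_j^\gamma$.

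The main step is to show that the conditional measure of
\[
F_{\ell,m}:=E_\ell\cap\{u_\ell^\beta v_\ell^\gamma>2^m\Lambda\}
\]
decays both in $\ell$ and in $m$. Iterated sparseness gives $\E[\mathbf 1_{E_\ell}\mid\mathcal F_{\tau_j}]\le(1-\eta)^{\ell-j}$. For decay in $m$, I would use Hölder's inequality with exponents $1/\beta,1/\gamma,1/(1-\beta-\gamma)$ in the conditional form
\[
\E\bigl[u_\ell^\beta v_\ell^\gamma\mathbf 1_{E_\ell}\mid\mathcal F_{\tau_j}\bigr]\le u_j^\beta v_j^\gamma\,\E[\mathbf 1_{E_\ell}\mid\mathcal F_{\tau_j}]^{1-\beta-\gamma}.
\]
This follows from the tower property, since $\E[u_\ell\mathbf 1_{E_\ell}\mid\mathcal F_{\tau_j}]\le u_j$.

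Combining the two, each term satisfies
\[
\E[u_\ell^\beta v_\ell^\gamma\mathbf 1_{E_\ell}\mid\mathcal F_{\tau_j}]\le\Lambda(1-\eta)^{(\ell-j)(1-\beta-\gamma)}.
\]
Summing this geometric series over $\ell\ge j$ gives the claim with constant $(1-(1-\eta)^{1-\beta-\gamma})^{-1}$. So the level-set splitting is unnecessary, and the whole estimate rests on this conditional Hölder bound. This is exactly where $\beta+\gamma<1$ is used.

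I expect the main technical obstacle to be justifying the measure-theoretic steps. These are the conditional Hölder inequality for $\sigma$-finite, possibly infinite, conditional expectations, the iteration of the sparseness inequality across $\sigma$-algebras $\mathcal F_{\tau_\ell}\supseteq\mathcal F_{\tau_j}$, and handling points where $u_j$ or $v_j$ is infinite or zero. I would deal with these by truncating $u,v$, localising to sets in $\mathcal F_{\tau_j}$ of finite measure, and concluding by monotone convergence.
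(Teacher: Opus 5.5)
Your proposal is correct and follows the paper's proof: for \eqref{eq: linear sparse sum controlled by Ainfty} you pass to the disjoint sets $E_k\setminus E_{k+1}$ via \eqref{eq: sparse property prelim}, dominate by $M_{\tau_j}u$, and apply the $A_\infty$ definition at $\tau=\tau_j$; for \eqref{eq: two weight sparse power packing} you use the same conditional H\"older and tower-property bound $\E[u_\ell^\beta v_\ell^\gamma\mathbf 1_{E_\ell}\mid\mathcal F_{\tau_j}]\le u_j^\beta v_j^\gamma(1-\eta)^{(\ell-j)(1-\beta-\gamma)}$ and then sum the geometric series. As you note yourself, the level-set splitting you first sketched is unnecessary, and the paper does not use it.
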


\begin{proof}
Set \(u_\ell:=\E[u\mid\mathcal F_{\tau_\ell}]\) and \(v_\ell:=\E[v\mid\mathcal F_{\tau_\ell}]\). 
For \(\ell\ge j\), conditional H\"older's inequality, the tower property, and iteration of \eqref{eq: sparse property prelim} give
\[
\E[u_\ell^\beta v_\ell^\gamma\mathbf 1_{E_\ell}
\mid\mathcal F_{\tau_j}]
\le
u_j^\beta v_j^\gamma
\E[\mathbf 1_{E_\ell}\mid\mathcal F_{\tau_j}]^{1-\beta-\gamma}
\le
u_j^\beta v_j^\gamma
(1-\eta)^{(\ell-j)(1-\beta-\gamma)}
\mathbf 1_{E_j}.
\]
Summing the geometric series proves \eqref{eq: two weight sparse power packing}.

For \eqref{eq: linear sparse sum controlled by Ainfty}, \eqref{eq: sparse property prelim} and the tower property give
\[
\E\Bigl[
\sum_{\ell\ge j}u_\ell\mathbf 1_{E_\ell}
\mid\mathcal F_{\tau_j}
\Bigr]
\le
\eta^{-1}
\E\Bigl[
\sum_{\ell\ge j}
u_\ell\mathbf 1_{E_\ell\setminus E_{\ell+1}}
\mid\mathcal F_{\tau_j}
\Bigr]
\le
\eta^{-1}
\E[M_{\tau_j}u\mid\mathcal F_{\tau_j}]
\le
\eta^{-1}[u]_{A_\infty}u_j.
\]
The middle inequality follows because \(u_\ell\le M_{\tau_j}u\) on \(E_\ell\setminus E_{\ell+1}\), and these sets are pairwise disjoint.
\end{proof}
The following estimate will be used in both the direct and dual arguments.
For \(0<r<p\), set
\[
\lambda
:=
\min\Bigl\{
1,\frac{r}{p-1},\frac{r}{p-r}
\Bigr\}.
\]
Then
\[
\frac rp<\lambda\le1,
\qquad
r-\lambda(p-1)\ge0,
\qquad
r-\lambda(p-1)+1-\lambda
=
r+1-p\lambda<1.
\]
On \(E_\ell\), the \(A_p\) condition gives
\[
\sigma_\ell^r\omega_\ell
=
\bigl(\sigma_\ell^{p-1}\omega_\ell\bigr)^\lambda
\sigma_\ell^{\,r-\lambda(p-1)}
\omega_\ell^{\,1-\lambda}
\le
[\omega,\sigma]_{A_p}^{\lambda}
\sigma_\ell^{\,r-\lambda(p-1)}
\omega_\ell^{\,1-\lambda}.
\]
Hence, by Lemma~\ref{lemma: two weight sparse power packing}, for every \(k\ge0\),
\begin{equation}
\label{eq: common conditional tail estimate}
\E\Bigl[
\sum_{\ell\ge k}
\sigma_\ell^r\omega_\ell\mathbf 1_{E_\ell}
\mathrel{\Big|}\mathcal F_{\tau_k}
\Bigr]
\lesssim_{\eta,p,r}
[\omega,\sigma]_{A_p}^{\lambda}
\sigma_k^{\,r-\lambda(p-1)}
\omega_k^{\,1-\lambda}.
\end{equation}

The following lemma passes estimates at the times \(\tau_j\) to
arbitrary stopping times.
\begin{lemma}
\label{lem: reduction to sparse times}
Let \(\{\tau_j\}_{j=0}^\infty\) be an increasing sequence of stopping times, and let \(a_j\ge0\) be \(\mathcal F_{\tau_j}\)-measurable and vanish on \(\{\tau_j=\infty\}\). 
Let \(q>0\) and \(u,v\ge0\) be measurable. 
Suppose that, for some \(C>0\) and every \(j\ge0\),
\begin{equation}\label{eq: reduction sparse times hypothesis}
\E\Bigl[
\Bigl(\sum_{k\ge j}a_k\Bigr)^qv
\mathrel{\Big|}\mathcal F_{\tau_j}
\Bigr]
\le
C\,\E[u\mid\mathcal F_{\tau_j}].
\end{equation}
Then, for every stopping time \(\tau\),
\[
\E\Bigl[
\Bigl(
\sum_{k=0}^\infty
a_k\mathbf 1_{\{\tau_k\ge\tau\}}
\Bigr)^qv
\mathrel{\Big|}\mathcal F_\tau
\Bigr]
\le
C\,\E[u\mid\mathcal F_\tau].
\]
\end{lemma}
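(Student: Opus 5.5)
The plan is to localize the arbitrary stopping time \(\tau\) to the sparse times by means of the first index that occurs after \(\tau\). For each \(j\ge0\) we consider the set
\[
B_j:=\{\tau_{j-1}<\tau\le\tau_j\},\qquad \tau_{-1}:=-\infty .
\]
The sets \(B_j\) are pairwise disjoint because the sequence is increasing, and their union is \(\{\tau\le\tau_j\text{ for some }j\}\). On the complement \(B_\infty:=\{\tau>\tau_j\ \forall j\}\) every indicator \(\mathbf 1_{\{\tau_k\ge\tau\}}\) vanishes. The vanishing of \(a_k\) on \(\{\tau_k=\infty\}\) also lets us discard the cases \(\tau_k=\tau=\infty\), so the left-hand side is \(0\) there.

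On \(B_j\), monotonicity gives \(\mathbf 1_{\{\tau_k\ge\tau\}}=\mathbf 1_{\{k\ge j\}}\). The only ambiguity is equality \(\tau_{k}=\tau\) for \(k<j\). Such \(k\) have \(\tau_k\le\tau_{j-1}<\tau\), so they are excluded. Hence on \(B_j\) we have \(\sum_k a_k\mathbf 1_{\{\tau_k\ge\tau\}}=\sum_{k\ge j}a_k=:S_j\), and
\[
\E\Bigl[\Bigl(\sum_k a_k\mathbf 1_{\{\tau_k\ge\tau\}}\Bigr)^qv\mid\mathcal F_\tau\Bigr]=\sum_{j\ge0}\E[\mathbf 1_{B_j}S_j^qv\mid\mathcal F_\tau].
\]
The key measurability fact is that \(B_j\in\mathcal F_\tau\cap\mathcal F_{\tau_j}\). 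We get this from \eqref{eq:stopped-sigma-algebra-facts}: \(\{\tau\le\tau_j\}\) and \(\{\tau_{j-1}<\tau\}\) lie in both stopped \(\sigma\)-algebras. We will also use the standard fact that on a set \(B\in\mathcal F_\tau\cap\mathcal F_{\tau_j}\) with \(\tau\le\tau_j\) on \(B\), conditional expectations satisfy \(\mathbf 1_B\E[h\mid\mathcal F_\tau]=\mathbf 1_B\E[\E[h\mid\mathcal F_{\tau_j}]\mid\mathcal F_\tau]\). Formally, \(\mathbf 1_B\E[h\mid\mathcal F_\tau]=\E[\mathbf 1_Bh\mid \mathcal F_{\tau\wedge\tau_j}]\)-type localization, via \(\mathcal F_{\tau}\cap\{\tau\le\tau_j\}\subseteq\mathcal F_{\tau_j}\).

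Using this and the hypothesis \eqref{eq: reduction sparse times hypothesis}, we get
\[
\mathbf 1_{B_j}\E[S_j^qv\mid\mathcal F_\tau]
=\E\bigl[\mathbf 1_{B_j}\E[S_j^qv\mid\mathcal F_{\tau_j}]\mid\mathcal F_\tau\bigr]
\le C\,\E\bigl[\mathbf 1_{B_j}\E[u\mid\mathcal F_{\tau_j}]\mid\mathcal F_\tau\bigr]
= C\,\mathbf 1_{B_j}\E[u\mid\mathcal F_\tau].
\]
Summing over \(j\) and using the disjointness of the \(B_j\) gives the claim. The nonnegativity of all quantities justifies interchanging sum and conditional expectation through conditional monotone convergence. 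The \(\sigma\)-finiteness of \(\mu|_{\mathcal F_\tau}\) makes the generalized conditional expectations of nonnegative functions meaningful.

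The main obstacle is the careful justification of the identity \(\mathbf 1_B\E[h\mid\mathcal F_\tau]=\mathbf 1_B\E[\E[h\mid\mathcal F_{\tau_j}]\mid\mathcal F_\tau]\) in the general \(\sigma\)-finite continuous-time setting. To handle it, we test against sets \(A\in\mathcal F_\tau\) of finite measure. Then \(A\cap B_j\subseteq\{\tau\le\tau_j\}\), so \(A\cap B_j\in\mathcal F_{\tau_j}\) by \eqref{eq:stopped-sigma-algebra-facts}. Hence
\[
\int_{A\cap B_j}h\,\mathrm d\mu=\int_{A\cap B_j}\E[h\mid\mathcal F_{\tau_j}]\,\mathrm d\mu,
\]
which gives the identity for nonnegative \(h\) by monotone convergence. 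This is the only point where we need care; the rest is bookkeeping of the partition.
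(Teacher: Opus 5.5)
Your proposal is correct and is essentially the paper's proof. It uses the same partition $\{\tau_{j-1}<\tau\le\tau_j\}$, the same identification of the tail sum with $\sum_{k\ge j}a_k$ on each piece, and the same key fact: for $A\in\mathcal F_\tau$, the set $A\cap\{\tau_{j-1}<\tau\le\tau_j\}$ lies in $\mathcal F_{\tau_j}$. This fact is applied by testing against sets in $\mathcal F_\tau$ and concluding via $\sigma$-finiteness of $\mu|_{\mathcal F_\tau}$. The only difference is packaging: you phrase the argument as a localized tower identity, whereas the paper integrates over $B\cap A_j$ directly.
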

\begin{proof}
Fix a stopping time \(\tau\), and set
\[
A_0:=\{\tau\le\tau_0\},
\qquad
A_j:=\{\tau_{j-1}<\tau\le\tau_j\},
\quad j\ge1.
\]
By \eqref{eq:stopped-sigma-algebra-facts}, the sets \(A_j\) are pairwise disjoint and belong to \(\mathcal F_{\tau_j}\).
Moreover,
\[
\sum_{k=0}^\infty a_k\mathbf 1_{\{\tau_k\ge\tau\}}
=
\sum_{k\ge j}a_k
\quad\text{on }A_j,
\]
and the sum on the left vanishes outside \(\bigcup_{j\ge0}A_j\).

If \(B\in\mathcal F_\tau\), then \(B\cap A_j\in\mathcal F_{\tau_j}\) by \eqref{eq:stopped-sigma-algebra-facts}. Hence, by
\eqref{eq: reduction sparse times hypothesis},
\begin{align*}
\int_B
\Bigl(&
\sum_{k=0}^\infty
a_k\mathbf 1_{\{\tau_k\ge\tau\}}
\Bigr)^qv\,\mathrm d\mu
=
\sum_{j=0}^\infty
\int_{B\cap A_j}
\Bigl(\sum_{k\ge j}a_k\Bigr)^qv\,\mathrm d\mu
\\
&\le
C\sum_{j=0}^\infty
\int_{B\cap A_j}
\E[u\mid\mathcal F_{\tau_j}]\,\mathrm d\mu
=
C\sum_{j=0}^\infty\int_{B\cap A_j}u\,\mathrm d\mu
\le
C\int_Bu\,\mathrm d\mu.
\end{align*}
Since \(\mu|_{\mathcal F_\tau}\) is \(\sigma\)-finite, the conditional inequality follows.
\end{proof}

\subsection{Direct testing estimate}
For \(r<p\), Lemma~\ref{lemma: principal lemma conditional version} and \eqref{eq: common conditional tail estimate} reduce the direct testing estimate to the linear packing estimate for \(\sigma\). We first reduce to this range by monotonicity. 
If \(r\ge p\), then, for every stopping time \(\tau\),
\[
\Bigl(
\sum_{k=0}^\infty
\sigma_k^r\mathbf 1_{E_k}\mathbf 1_{\{\tau_k\ge\tau\}}
\Bigr)^{1/r}
\le
\Bigl(
\sum_{k=0}^\infty
\sigma_k^{p-1}\mathbf 1_{E_k}\mathbf 1_{\{\tau_k\ge\tau\}}
\Bigr)^{1/(p-1)}
\]
by monotonicity in the exponent. Hence
\[
\mathfrak T_r
\le
\mathfrak T_{p-1}^{\,r/(p-1)}.
\]
Therefore, it is enough to consider \(r<p\).

For \(k\ge0\), set
\(
S_k:=\sum_{\ell\ge k}\sigma_\ell^r\mathbf 1_{E_\ell}
\) and
fix \(j\ge0\). Applying
Lemma~\ref{lemma: principal lemma conditional version} to
\((X,\mu_\omega)\), with exponent \(p/r\), \(w=1\), and
\(\alpha_k=\sigma_k^r\mathbf 1_{E_k}\), gives
\begin{equation}\label{eq: principal lemma for T_r}
\E\bigl[S_j^{p/r}\omega\mid\mathcal F_{\tau_j}\bigr]
\lesssim_{p,r}
\E\Bigl[
\sum_{k\ge j}
\sigma_k^r\mathbf 1_{E_k}
\bigl(
\E_\omega[S_k\mid\mathcal F_{\tau_k}]
\bigr)^{\frac{p-r}{r}}
\omega
\mathrel{\Big|}\mathcal F_{\tau_j}
\Bigr].
\end{equation}

By the tower property, \eqref{eq: weighted cond exp identity}, and \eqref{eq: common conditional tail estimate},
\[
\E_\omega[S_k\mid\mathcal F_{\tau_k}]
\lesssim_{\eta,p,r}
[\omega,\sigma]_{A_p}^{\lambda}
\mathbf 1_{\{\omega_k>0\}}
\sigma_k^{\,r-\lambda(p-1)}
\omega_k^{-\lambda}.
\]
Substituting this estimate into \eqref{eq: principal lemma for T_r} and using the tower property, we obtain
\[
\begin{aligned}
\E[S_j^{p/r}\omega\mid\mathcal F_{\tau_j}]
&\lesssim_{\eta,p,r}
[\omega,\sigma]_{A_p}^{\lambda\frac{p-r}{r}}
\E\Bigl[
\sum_{k\ge j}
\mathbf 1_{\{\omega_k>0\}}
\sigma_k
\bigl(\sigma_k^{p-1}\omega_k\bigr)^{
1-\lambda\frac{p-r}{r}}
\mathbf 1_{E_k}
\mathrel{\Big|}\mathcal F_{\tau_j}
\Bigr]
\\
&\lesssim_{\eta,p,r}
[\omega,\sigma]_{A_p}
\E\Bigl[
\sum_{k\ge j}\sigma_k\mathbf1_{E_k}
\mathrel{\Big|}\mathcal F_{\tau_j}
\Bigr]
\lesssim_{\eta,p,r}
[\omega,\sigma]_{A_p}
[\sigma]_{A_\infty}\sigma_j.
\end{aligned}
\]
The second estimate uses the \(A_p\) condition and \(1-\lambda\frac{p-r}{r}\ge0\); the last follows from \eqref{eq: linear sparse sum controlled by Ainfty}.

Applying Lemma~\ref{lem: reduction to sparse times} with
\[
a_k=\sigma_k^r\mathbf 1_{E_k},
\qquad
q=\frac pr,
\qquad
u=\sigma,
\qquad
v=\omega
\]
we obtain, for every stopping time \(\tau\),
\[
\E[\mathcal H_\tau^{p/r}\omega\mid\mathcal F_\tau]
\lesssim_{\eta,p,r}
[\omega,\sigma]_{A_p}
[\sigma]_{A_\infty}
\E[\sigma\mid\mathcal F_\tau].
\]
Taking the \(r/p\)-power in the definition of \(\mathfrak T_r\) yields
\[
\mathfrak T_r
\lesssim_{\eta,p,r}
[\omega,\sigma]_{A_p}^{r/p}
[\sigma]_{A_\infty}^{r/p}.
\]
\subsection{Dual testing estimate}
For the dual test estimate, we apply Lemma~\ref{lemma: principal lemma conditional version} with respect to \(\mu_\sigma\). The same conditional tail estimate \eqref{eq: common conditional tail estimate} applies, and the final packing estimate now involves \(\omega\). 
Assume that \(p>r\). For \(k\ge0\), define
\[
S_k^*
:=
\sum_{\ell\ge k}
\sigma_\ell^{r-1}\mathbf 1_{\{\sigma_\ell>0\}}
\omega_\ell\mathbf 1_{E_\ell}.
\]
Fix an integer \(j\ge0\). Applying
Lemma~\ref{lemma: principal lemma conditional version} to
\((X,\mu_\sigma)\), with exponent \((p/r)'\), \(w=1\), and
\(
\alpha_k
=
\sigma_k^{r-1}\mathbf 1_{\{\sigma_k>0\}}
\omega_k\mathbf 1_{E_k},
\)
gives
\[
\E\bigl[(S_j^*)^{(p/r)'}\sigma
\mid\mathcal F_{\tau_j}\bigr]
\lesssim_{p,r}
\E\Bigl[
\sum_{k\ge j}
\sigma_k^{r-1}\mathbf 1_{\{\sigma_k>0\}}
\omega_k\mathbf 1_{E_k}
\bigl(
\E_\sigma[S_k^*\mid\mathcal F_{\tau_k}]
\bigr)^{\frac{r}{p-r}}
\sigma
\mathrel{\Big|}\mathcal F_{\tau_j}
\Bigr].
\]
By the tower property,
\[
\E[S_k^*\sigma\mid\mathcal F_{\tau_k}]
=
\E\Bigl[
\sum_{\ell\ge k}
\sigma_\ell^r\omega_\ell\mathbf1_{E_\ell}
\mathrel{\Big|}\mathcal F_{\tau_k}
\Bigr]
=
\E[S_k\omega\mid\mathcal F_{\tau_k}].
\]
Hence, by \eqref{eq: weighted cond exp identity} and \eqref{eq: common conditional tail estimate},
\[
\E_\sigma[S_k^*\mid\mathcal F_{\tau_k}]
\lesssim_{\eta,p,r}
[\omega,\sigma]_{A_p}^{\lambda}
\mathbf1_{\{\sigma_k>0\}}
\sigma_k^{\,r-\lambda(p-1)-1}
\omega_k^{\,1-\lambda}.
\]
Substituting this into the conditional principal-lemma bound gives
\[
\begin{aligned}
\E\bigl[(S_j^*)^{(p/r)'}\sigma
\mid\mathcal F_{\tau_j}\bigr]
&\lesssim_{\eta,p,r}
[\omega,\sigma]_{A_p}^{\lambda\frac r{p-r}}
\E\Bigl[
\sum_{k\ge j}
\mathbf1_{\{\sigma_k>0\}}
\omega_k
\bigl(\sigma_k^{p-1}\omega_k\bigr)^{
(1-\lambda)\frac r{p-r}}
\mathbf1_{E_k}
\mathrel{\Big|}\mathcal F_{\tau_j}
\Bigr]
\\
&\lesssim_{\eta,p,r}
[\omega,\sigma]_{A_p}^{\frac r{p-r}}
\E\Bigl[
\sum_{k\ge j}\omega_k\mathbf1_{E_k}
\mathrel{\Big|}\mathcal F_{\tau_j}
\Bigr]
\lesssim_{\eta,p,r}
[\omega,\sigma]_{A_p}^{\frac r{p-r}}
[\omega]_{A_\infty}\omega_j.
\end{aligned}
\]
The second estimate uses the \(A_p\) condition and \((1-\lambda)\frac r{p-r}\ge0\); the last follows from \eqref{eq: linear sparse sum controlled by Ainfty}.

Applying Lemma~\ref{lem: reduction to sparse times} with
\[
a_k
=
\sigma_k^{r-1}\mathbf 1_{\{\sigma_k>0\}}
\omega_k\mathbf 1_{E_k},
\qquad
q=\left(\frac pr\right)',
\qquad
u=\omega,
\qquad
v=\sigma,
\]
we obtain, for every stopping time \(\tau\),
\[
\E[(\mathcal H_\tau^*)^{(p/r)'}\sigma
\mid\mathcal F_\tau]
\lesssim_{\eta,p,r}
[\omega,\sigma]_{A_p}^{\frac r{p-r}}
[\omega]_{A_\infty}
\E[\omega\mid\mathcal F_\tau].
\]
Taking the \(1/(p/r)'=(p-r)/p\)-power in the definition of \(\mathfrak T_r^*\) yields
\[
\mathfrak T_r^*
\lesssim_{\eta,p,r}
[\omega,\sigma]_{A_p}^{r/p}
[\omega]_{A_\infty}^{1-r/p}.
\]
This completes the proof of Proposition~\ref{prop: testing conditions bounded by characteristics}. Combining it with {Propositions~\ref{prop:case-p-greater-r} and \ref{prop:case-p-less-r}} proves Theorem~\ref{thm: sparse operators ap ainfty bound}.
\section{Weak-type estimates}
\label{sec:weak-type}

In this section we prove
Theorem~\ref{thm: sparse operators weak type bound}. For \(1<p<\infty\) and \(r<p\), Theorem~\ref{thm: sparse operators weak type bound} follows directly from Propositions~\ref{prop:case-p-greater-r}
and~\ref{prop: testing conditions bounded by characteristics}. However, we prefer to give a self-contained treatment of all cases using analogous arguments. In our proof, we will group the values of the stopped conditional averages in $\mathcal A_{\mathcal S}^r$. For \(r>p\), these groups can be summed directly. For \(r\le p\), we will also need a weighted counting estimate for the number of averages in each group.

Throughout this section, let \(0<\eta<1\), let
\(\mathcal S=\{\tau_j\}_{j\ge0}\) be an
\(\eta\)-sparse sequence of stopping times, and set
\(E_j:=\{\tau_j<\infty\}\).
For weights \(\sigma,\omega\), write
\[
\sigma_j:=\E[\sigma\mid\mathcal F_{\tau_j}],
\qquad
\omega_j:=\E[\omega\mid\mathcal F_{\tau_j}].
\]

\subsection{Maximal and layer estimates}
\label{sec:maximal-layer-estimates}
We adapt to our setting the argument of Hyt\"onen and Li \cite[Section~4]{hytonen_weak_2017},
which builds on the square-function argument of
Lacey and Scurry \cite{lacey_scurry_2012}.
We begin with weak-type bounds for \(M_{1,\mathcal S}\).
Recall that
\[
M_{1,\mathcal S}h
:=\sup_{j\ge0}\E[|h|\mid\mathcal F_{\tau_j}]\mathbf1_{E_j}
\qquad\text{and}\qquad
M_{\omega,\mathcal S}h
:=\sup_{j\ge0}\E_\omega[|h|\mid\mathcal F_{\tau_j}]
\mathbf1_{E_j}.
\]
\begin{lemma}
\label{lem:weak-sequential-maximal}
~
\begin{enumerate}[label=\normalfont(\roman*)]
\item\label{item:weak-sequential-maximal-p-general}
Let \(1<p<\infty\), and let \(\sigma,\omega\) be weights.
For every \(f\in L^p_\sigma\),\begin{equation*}
\|M_{1,\mathcal S}(f\sigma)\|_{L^{p,\infty}_\omega}
\le [\omega,\sigma]_{A_p}^{1/p}\|f\|_{L^p_\sigma}.
\end{equation*}
\item\label{item:weak-sequential-maximal-p-1}
Let \(v,\omega\) be weights.
For every \(f\in L^1_v\),
\[
\|M_{1,\mathcal S}f\|_{L^{1,\infty}_\omega}
\le [\omega,v]_{A_1}\|f\|_{L^1_v}.
\]
\end{enumerate}
\end{lemma}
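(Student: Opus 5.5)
Both parts follow the classical stopping-time argument for the weak-type bound of the maximal operator, with the first time each stopped average exceeds the level $\lambda$ playing the role of the maximal dyadic cubes. It suffices to take $f\ge0$ and, by monotone convergence, to replace the sequence by finitely many indices $0\le j\le J$. Fix $\lambda>0$ and set
\[
N:=\inf\{j\ge0:\ \mathbf 1_{E_j}\E[f\sigma\mid\mathcal F_{\tau_j}]>\lambda\},\qquad \rho:=\tau_N,
\]
with $\tau_\infty=\infty$. As in the proof of Lemma~\ref{lem:weighted-principal-stopping-sequence}, $N$ is a stopping time for the discrete filtration $(\mathcal F_{\tau_j})_j$, and $\rho$ is a stopping time. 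Moreover,
\[
\{M_{1,\mathcal S}(f\sigma)>\lambda\}=\{\rho<\infty\}=:A,
\]
and on $A$ we have $\E[f\sigma\mid\mathcal F_\rho]>\lambda$. Here we use that $\E[\cdot\mid\mathcal F_{\tau_N}]=\E[\cdot\mid\mathcal F_{\tau_j}]$ on $\{N=j\}$, which is the standard localization of stopped conditional expectations.

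For part (i), we write $\omega(A)=\int_A\E[\omega\mid\mathcal F_\rho]\,\mathrm d\mu$, which is valid since $A\in\mathcal F_\rho$. On $A$, conditional Hölder's inequality gives
\[
\lambda<\E[f\sigma\mid\mathcal F_\rho]\le \E[f^p\sigma\mid\mathcal F_\rho]^{1/p}\,\E[\sigma\mid\mathcal F_\rho]^{1/p'}.
\]
Raising this to the power $p$ and using the $A_p$ condition at the stopping time $\rho$ yields, on $A$,
\[
\lambda^p\,\E[\omega\mid\mathcal F_\rho]\le \E[\omega\mid\mathcal F_\rho]\,\E[\sigma\mid\mathcal F_\rho]^{p-1}\,\E[f^p\sigma\mid\mathcal F_\rho]\le[\omega,\sigma]_{A_p}\,\E[f^p\sigma\mid\mathcal F_\rho].
\]
Integrating over $A$ and using $A\in\mathcal F_\rho$ once more gives
\[
\lambda^p\omega(A)\le[\omega,\sigma]_{A_p}\int_A f^p\sigma\,\mathrm d\mu\le[\omega,\sigma]_{A_p}\|f\|_{L^p_\sigma}^p.
\]
This is the claim with constant exactly $1$. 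To handle the possibility that $\omega(A)=\infty$, I would first intersect $A$ with sets $X_n\in\mathcal F_{\tau_0}$ of finite $\mu$- and $\omega$-measure, exactly as in the proof of Proposition~\ref{prop:case-p-greater-r}, and then let $n\to\infty$.

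For part (ii), the same $\rho$ and $A$ work with $f\sigma$ replaced by $f$. Then
\[
\lambda\,\omega(A)\le\int_A\E[f\mid\mathcal F_\rho]\,\E[\omega\mid\mathcal F_\rho]\,\mathrm d\mu=\int_A f\,\E[\omega\mid\mathcal F_\rho]\,\mathrm d\mu,
\]
again because $A\in\mathcal F_\rho$. The $A_1$ condition gives $\mathbf 1_A\E[\omega\mid\mathcal F_\rho]\le[\omega,v]_{A_1}\,v$, and hence $\lambda\,\omega(A)\le[\omega,v]_{A_1}\|f\|_{L^1_v}$.

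The only delicate point I anticipate is the measure-theoretic step of identifying $\E[\cdot\mid\mathcal F_\rho]$ with $\E[\cdot\mid\mathcal F_{\tau_j}]$ on $\{N=j\}$. This requires $\{N=j\}\in\mathcal F_{\tau_j}$, which holds because $N$ is defined through $\mathcal F_{\tau_j}$-measurable averages; it then follows from \eqref{eq:stopped-sigma-algebra-facts} by the same disjoint-decomposition argument as in Lemma~\ref{lem: reduction to sparse times}.
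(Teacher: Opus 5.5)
Your proof is correct. It differs from the paper's mainly in how it is packaged.

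The paper first proves the pointwise bound $\bigl(M_{1,\mathcal S}(f\sigma)\bigr)^p\le[\omega,\sigma]_{A_p}\,M_{\omega,\mathcal S}(f^p\sigma/\omega)$, and respectively $M_{1,\mathcal S}f\le[\omega,v]_{A_1}\,M_{\omega,\mathcal S}(fv/\omega)$. It does this by applying conditional H\"older and the characteristic at each fixed $\tau_j$. It then invokes Doob's weak-$(1,1)$ inequality under $\mu_\omega$ for the sampled filtration as a black box.

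You instead unfold that Doob inequality. You stop at the first exceedance index $N$ and apply H\"older and the characteristic once, at the hitting time $\rho=\tau_N$. This is legitimate because the characteristics are suprema over all stopping times.

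The paper's route is shorter and isolates a reusable pointwise domination by the weighted sequential maximal operator. Yours is self-contained and uses only $\mu$-conditional expectations, never $\E_\omega$.

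Two simplifications are available to you:
\begin{enumerate}
\item The ``delicate point'' disappears if you split $A$ into the disjoint sets $\{N=j\}\in\mathcal F_{\tau_j}$, apply H\"older and the characteristic at $\tau_j$ on each piece, and then sum. This avoids identifying $\mathcal F_\rho$ with $\mathcal F_{\tau_j}$ on $\{N=j\}$.
\item Neither the truncation to finitely many indices nor the sets $X_n$ are needed. Integrating your pointwise inequality over $A$ is valid for nonnegative functions even when $\omega(A)=\infty$, and the right-hand side is finite, so finiteness of $\omega(A)$ follows automatically.
\end{enumerate}
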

\begin{proof}
It suffices to consider \(f\ge0\) and assume that the
relevant weight characteristic is finite.
For part~\ref{item:weak-sequential-maximal-p-general},
conditional H\"older's inequality, the weighted conditional
expectation identity, and the \(A_p\) condition give, on \(E_j\),
\[
\E[f\sigma\mid\mathcal F_{\tau_j}]^p
\le
\sigma_j^{p-1}\E[f^p\sigma\mid\mathcal F_{\tau_j}]=
\sigma_j^{p-1}\omega_j
\E_\omega\Bigl[\frac{f^p\sigma}{\omega}
\Bigm|\mathcal F_{\tau_j}\Bigr]
\le
[\omega,\sigma]_{A_p}
\E_\omega\Bigl[\frac{f^p\sigma}{\omega}
\Bigm|\mathcal F_{\tau_j}\Bigr].
\]
Taking the supremum over \(j\), we obtain
\[
\bigl(M_{1,\mathcal S}(f\sigma)\bigr)^p
\le [\omega,\sigma]_{A_p}\,
M_{\omega,\mathcal S}\Bigl(\frac{f^p\sigma}{\omega}\Bigr).
\]

For part~\ref{item:weak-sequential-maximal-p-1}, the \(A_1\) condition gives
\(\omega_j\le[\omega,v]_{A_1}v\) on \(E_j\). Multiplying by \(f\),
conditioning on \(\mathcal F_{\tau_j}\), and using
\(E_j\in\mathcal F_{\tau_j}\) before taking the supremum gives
\[
M_{1,\mathcal S}f
\le [\omega,v]_{A_1}\,
M_{\omega,\mathcal S}\Bigl(\frac{fv}{\omega}\Bigr).
\]
Both claims now follow from Doob's weak-\((1,1)\) inequality
under \(\mu_\omega\)
\cite[Theorem~3.1]{riederer_refined_2019}.
\end{proof}
We next estimate contributions from stopped averages lying in a fixed geometric range, which we call a coefficient layer.
Following \cite[Section~4]{DomingoSalazarLaceyRey2016} and \cite[Section~4]{hytonen_weak_2017}, the following lemma gives a bound uniform in the layer index.
We define the layers using consecutive powers of \(1-\eta/2\).
This ratio exceeds \(1-\eta\), allowing us to retain a fixed fraction of each stopped average on pairwise disjoint sets.
\begin{lemma}
\label{lem:weak-coefficient-layer}
Let \(m\ge0\) be an integer.
\begin{enumerate}[label=\normalfont(\roman*)]
\item \label{it:weakcoeff1}
Let \(1<p<\infty\), and let \(\sigma,\omega\) be weights.
For a nonnegative \(f\in L^p_\sigma\), define
\begin{equation*}
D_j^m
:=E_j\cap\Bigl\{
\bigl(1-\tfrac\eta2\bigr)^{m+1}
<\E[f\sigma\mid\mathcal F_{\tau_j}]
\le\bigl(1-\tfrac\eta2\bigr)^m
\Bigr\},
\qquad j\ge0.
\end{equation*}
Then
\begin{equation*}
\sum_{j\ge0}
\int_{D_j^m}
\E[f\sigma\mid\mathcal F_{\tau_j}]^p\,\mathrm d\mu_\omega
\le
\Bigl(\frac{2-\eta}{\eta}\Bigr)^p
[\omega,\sigma]_{A_p}\|f\|_{L^p_\sigma}^p.
\end{equation*}
\item \label{it:weakcoeff2}
Let \(v,\omega\) be  weights.
For a nonnegative \(f\in L^1_v\), define
\begin{equation*}
D_j^m
:=E_j\cap\Bigl\{
\bigl(1-\tfrac\eta2\bigr)^{m+1}
<\E[f\mid\mathcal F_{\tau_j}]
\le\bigl(1-\tfrac\eta2\bigr)^m
\Bigr\},
\qquad j\ge0.
\end{equation*}
Then
\begin{equation*}
\sum_{j\ge0}
\int_{D_j^m}
\E[f\mid\mathcal F_{\tau_j}]
\,\mathrm d\mu_\omega
\le
\frac{2-\eta}{\eta}
[\omega,v]_{A_1}\|f\|_{L^1_v}.
\end{equation*}
\end{enumerate}
\end{lemma}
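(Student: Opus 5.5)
Fix \(m\) and write \(a:=(1-\tfrac\eta2)^m\), so that on \(D_j^m\) the stopped average \(\E[f\sigma\mid\mathcal F_{\tau_j}]\) lies in \((a(1-\tfrac\eta2),a]\). The plan is to replace each \(D_j^m\) by the part of it that escapes the next generation, \(F_j:=D_j^m\setminus E_{j+1}\). These sets are pairwise disjoint in \(j\) because the sets \(E_j\setminus E_{j+1}\) are. Disjointness lets us sum the resulting contributions against a single integral of \(f^p\sigma\). The layer restriction matters because it allows a stopped average to be compared with a conditional integral of \(f\sigma\) over the escaping set, losing only a factor \(\tfrac{2-\eta}{\eta}\).

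\emph{Key step.} On \(D_j^m\in\mathcal F_{\tau_j}\) we split
\[
\E[f\sigma\mid\mathcal F_{\tau_j}]
=\E[f\sigma\mathbf 1_{E_{j+1}}\mid\mathcal F_{\tau_j}]
+\E[f\sigma\mathbf 1_{E_j\setminus E_{j+1}}\mid\mathcal F_{\tau_j}].
\]
To bound the first term, observe that any point of \(E_{j+1}\) lying below \(D_j^m\) has \(\E[f\sigma\mid\mathcal F_{\tau_{j+1}}]\) at most \(a\) (if it stays in the layer) or at most \(a(1-\tfrac\eta2)\) (if it drops below). This is not immediate, since the average could also jump above \(a\). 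So I would work with the layer sets themselves rather than \(E_{j+1}\): put \(G_j:=D_j^m\setminus\bigcup_{k>j}D_k^m\), and note that the \(G_j\) are disjoint because a point in several \(D_k^m\) is assigned only to the last one. Then \(\mathbf 1_{D_j^m}\sum_{k>j}\ldots\) is controlled via the tower property by \(a\,\E[\mathbf 1_{\bigcup_{k>j}D_k^m}\mid\mathcal F_{\tau_j}]\le a(1-\eta)\) on \(D_j^m\), using the sparseness \eqref{eq: sparse property prelim} and the conditional decomposition over first-entry times \(k>j\). Hence on \(D_j^m\) the escaping contribution satisfies
\[
\E[f\sigma\mathbf 1_{G_j}\mid\mathcal F_{\tau_j}]\ge a(1-\tfrac\eta2)-a(1-\eta)=\tfrac\eta2 a\ge \tfrac{\eta}{2-\eta}\E[f\sigma\mid\mathcal F_{\tau_j}].
\]
I expect this first-entry bookkeeping to be the main obstacle: one has to verify the measurability of the first-entry sets using \eqref{eq:stopped-sigma-algebra-facts} and then apply the tower property iteratively.

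\emph{Conclusion for (i).} By conditional Hölder with \(\sigma\) as measure,
\[
\E[f\sigma\mathbf 1_{G_j}\mid\mathcal F_{\tau_j}]^p\le \sigma_j^{p-1}\E[f^p\sigma\mathbf 1_{G_j}\mid\mathcal F_{\tau_j}],
\]
so on \(D_j^m\)
\[
\E[f\sigma\mid\mathcal F_{\tau_j}]^p\le\Bigl(\tfrac{2-\eta}{\eta}\Bigr)^p\sigma_j^{p-1}\E[f^p\sigma\mathbf 1_{G_j}\mid\mathcal F_{\tau_j}].
\]
Integrating over \(D_j^m\) against \(\mu_\omega\) replaces \(\omega\) by \(\omega_j\), because the integrand is \(\mathcal F_{\tau_j}\)-measurable. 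The \(A_p\) bound \(\omega_j\sigma_j^{p-1}\le[\omega,\sigma]_{A_p}\) then leaves \(\int_{G_j}f^p\sigma\,\mathrm d\mu\), after conditioning back. Summing over \(j\) and using the disjointness of the \(G_j\) gives the claim.

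\emph{Part (ii).} The same argument applies with \(f\) in place of \(f\sigma\). Integrating against \(\mu_\omega\) yields \(\int_{D_j^m}\omega_j\,\E[f\mathbf 1_{G_j}\mid\mathcal F_{\tau_j}]\,\mathrm d\mu\), which equals \(\int \mathbf 1_{D_j^m}\omega_j f\mathbf 1_{G_j}\,\mathrm d\mu\). Since \(\omega_j\le[\omega,v]_{A_1}v\) on \(E_j\), this is at most \([\omega,v]_{A_1}\int_{G_j}fv\,\mathrm d\mu\), and disjointness of the \(G_j\) finishes the proof.
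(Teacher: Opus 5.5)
Your approach is the paper's: your \(G_j\) is exactly the paper's \(F_j\). The first-entry partition of \(D_j^m\setminus G_j\) into \(\mathcal F_{\tau_k}\)-measurable pieces of \(D_k^m\), together with sparseness, gives \(\E[f\sigma\mathbf 1_{D_j^m\setminus G_j}\mid\mathcal F_{\tau_j}]\le(1-\eta)a\mathbf 1_{D_j^m}\), as in the paper. The finish via conditional H\"older and the \(A_p\) (resp.\ \(A_1\)) condition on disjoint sets is also the same.

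However, the last inequality in your key step is false. Write \(X:=\E[f\sigma\mid\mathcal F_{\tau_j}]\). The layer condition \(X>(1-\tfrac\eta2)a\) gives \(\tfrac{\eta}{2-\eta}X>\tfrac{\eta}{2-\eta}\cdot\tfrac{2-\eta}{2}a=\tfrac\eta2 a\) at every point of \(D_j^m\). So your claimed \(\tfrac\eta2 a\ge\tfrac{\eta}{2-\eta}X\) fails everywhere on \(D_j^m\), not just occasionally. What your chain actually proves, using \(X\le a\), is \(\E[f\sigma\mathbf 1_{G_j}\mid\mathcal F_{\tau_j}]\ge\tfrac\eta2 X\). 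That gives the lemma with \((2/\eta)^p\) and \(2/\eta\) in place of \(((2-\eta)/\eta)^p\) and \((2-\eta)/\eta\). This would be enough for the later \(\lesssim_{p,\eta}\) applications, but it is not the stated bound.

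The loss comes from lower-bounding the positive term \(X\) by \((1-\tfrac\eta2)a\), which discards the ratio information. Instead, apply the layer's lower bound to the subtracted term: on \(D_j^m\) you have \(a<X/(1-\tfrac\eta2)\), so
\[
\E[f\sigma\mathbf 1_{G_j}\mid\mathcal F_{\tau_j}]\ge X-(1-\eta)a\ge\Bigl(1-\frac{1-\eta}{1-\eta/2}\Bigr)X=\frac{\eta}{2-\eta}X .
\]
This is exactly the paper's computation, and the same correction applies verbatim in part (ii).
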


\begin{proof}
We first prove part~\ref{it:weakcoeff1}. 
Set
\[
F_j:=D_j^m\setminus\bigcup_{k>j}D_k^m.
\]
The sets \(F_j\) are pairwise disjoint, but need not
belong to \(\mathcal F_{\tau_j}\).
For fixed \(j\), partition \(D_j^m\setminus F_j\) according to the first index \(k>j\) for which the point belongs to \(D_k^m\). Each piece belongs to \(\mathcal F_{\tau_k}\) and is contained in \(D_k^m\). Moreover, \(D_j^m\setminus F_j\subseteq D_j^m\cap E_{j+1}\). The tower property, conditional monotone convergence, and sparseness therefore give
\[
\E[f\sigma\mathbf1_{D_j^m\setminus F_j}\mid\mathcal F_{\tau_j}]
\le
\left(1-\frac\eta2\right)^m
\E[\mathbf1_{D_j^m\setminus F_j}\mid\mathcal F_{\tau_j}]
\le
(1-\eta)\left(1-\frac\eta2\right)^m\mathbf1_{D_j^m}.
\]
Since \(D_j^m\in\mathcal F_{\tau_j}\), subtracting this bound and using the definition of \(D_j^m\) gives, on \(D_j^m\)
\[
\E[f\sigma\mathbf1_{F_j}\mid\mathcal F_{\tau_j}]
\ge
\Bigl(1-\frac{1-\eta}{1-\eta/2}\Bigr)
\E[f\sigma\mid\mathcal F_{\tau_j}]
=\frac{\eta}{2-\eta}
\E[f\sigma\mid\mathcal F_{\tau_j}].
\]
Conditional H\"older's inequality and the \(A_p\) condition give
\[
\begin{aligned}
\int_{D_j^m}
\E[f\sigma\mid\mathcal F_{\tau_j}]^p\,\mathrm d\mu_\omega
&\le
\Bigl(\frac{2-\eta}{\eta}\Bigr)^p
\int_{D_j^m}
\E[f^p\sigma\mathbf1_{F_j}\mid\mathcal F_{\tau_j}]
\sigma_j^{p-1}\omega_j\,\mathrm d\mu
\\&\le
\Bigl(\frac{2-\eta}{\eta}\Bigr)^p
[\omega,\sigma]_{A_p}
\int_{F_j}f^p\,\mathrm d\mu_\sigma.
\end{aligned}
\]
Summing over the pairwise disjoint sets \(F_j\)
proves part~\ref{it:weakcoeff1}.

For part~\ref{it:weakcoeff2}, repeat the argument with \(f\) in place
of \(f\sigma\) and use the \(A_1\) condition to obtain
\[
\int_{D_j^m}
\E[f\mid\mathcal F_{\tau_j}]\,\mathrm d\mu_\omega
\le
\frac{2-\eta}{\eta}
\int_{F_j}f\omega_j\,\mathrm d\mu
\le
\frac{2-\eta}{\eta}[\omega,v]_{A_1}
\int_{F_j}f\,\mathrm d\mu_v.
\]
Summing over the pairwise disjoint sets \(F_j\)
proves part~\ref{it:weakcoeff2}.
\end{proof}

When \(r\le p\), we also need to control how many stopped averages belong to the same coefficient layer at each point.
The following lemma controls large overlaps within each layer and is the main new ingredient in estimating the tail of the layer decomposition.
\begin{lemma}[Weighted counting estimate]
\label{lem:weak-counting}
Let \(\omega\) be a weight.
For each \(j\ge0\), let \(D_j\in\mathcal F_{\tau_j}\) satisfy
\(D_j\subseteq E_j\).
Suppose that \(\sum_{j\ge0}\omega(D_j)<\infty\).
Then, for every \(s>1\),
\begin{equation*}
\Bigl\|\sum_{j\ge0}\mathbf1_{D_j}\Bigr\|_{L^s_\omega}^s
\lesssim_s
\eta^{1-s}[\omega]_{A_\infty}^{s-1}
\sum_{j\ge0}\omega(D_j).
\end{equation*}
\end{lemma}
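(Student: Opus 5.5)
The plan is to apply the principal lemma, Lemma~\ref{lemma: principal lemma expectation version}, on the sampled filtration $(\mathcal F_{\tau_k})_{k\ge0}$ under the measure $\nu=\mu_\omega$. This reduces the $s$-th moment of the counting function to a linear sum weighted by conditional tail counts. Each tail count is then bounded pointwise by the $A_\infty$ packing estimate \eqref{eq: linear sparse sum controlled by Ainfty}.

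\emph{Reduction to finite sums.} Fix $J\in\mathbb N$ and set $N^{J}:=\sum_{j=0}^{J}\mathbf 1_{D_j}$. Then $N^J\le J+1$, and $N^J$ is supported on $\bigcup_{j\le J}D_j$, a set of finite $\omega$-measure. Hence both integrals in Lemma~\ref{lemma: principal lemma expectation version} are finite, and the $\alpha_k$ below are bounded. It suffices to prove the estimate for $N^J$ with a constant independent of $J$; monotone convergence as $J\to\infty$ then gives the claim. Recall that $\mu_\omega|_{\mathcal F_{\tau_k}}$ is $\sigma$-finite by the standing assumptions, and extend the filtration to negative indices by $\mathcal F_{\tau_0}$ with zero coefficients.

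\emph{Principal lemma.} Apply Lemma~\ref{lemma: principal lemma expectation version} with $\nu=\mu_\omega$, $w=1$, exponent $s$, and $\alpha_k=\mathbf 1_{D_k}$ for $k\le J$ (zero otherwise). These coefficients are $\mathcal F_{\tau_k}$-measurable since $D_k\in\mathcal F_{\tau_k}$. Writing $\widetilde\alpha_k=\sum_{k\le\ell\le J}\mathbf 1_{D_\ell}$, this gives
\[
\int_X (N^J)^s\,\mathrm d\mu_\omega\lesssim_s\sum_{k\le J}\int_{D_k}\bigl(\E_\omega[\widetilde\alpha_k\mid\mathcal F_{\tau_k}]\bigr)^{s-1}\,\mathrm d\mu_\omega .
\]

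\emph{Pointwise tail bound.} The key step is to show that, on $D_k\subseteq E_k$,
\[
\E_\omega[\widetilde\alpha_k\mid\mathcal F_{\tau_k}]\le\eta^{-1}[\omega]_{A_\infty}.
\]
By \eqref{eq: weighted cond exp identity} and $D_\ell\subseteq E_\ell$, $D_\ell\in\mathcal F_{\tau_\ell}$, the tower property gives
\[
\E[\widetilde\alpha_k\,\omega\mid\mathcal F_{\tau_k}]\le\E\Bigl[\sum_{\ell\ge k}\omega_\ell\mathbf 1_{E_\ell}\Bigm|\mathcal F_{\tau_k}\Bigr]\le\eta^{-1}[\omega]_{A_\infty}\,\omega_k,
\]
where the last inequality is \eqref{eq: linear sparse sum controlled by Ainfty} from Lemma~\ref{lemma: two weight sparse power packing}. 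Since $\omega>0$, we have $\omega_k>0$ $\mu$-a.e., so we may divide by $\omega_k$ to obtain the bound on $\E_\omega[\widetilde\alpha_k\mid\mathcal F_{\tau_k}]$.

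\emph{Conclusion and main obstacle.} Substituting the tail bound into the principal-lemma inequality yields
\[
\int_X (N^J)^s\,\mathrm d\mu_\omega\lesssim_s(\eta^{-1}[\omega]_{A_\infty})^{s-1}\sum_{k\le J}\omega(D_k),
\]
which is the claimed estimate uniformly in $J$. The main point requiring care is the legitimacy of the principal lemma here, namely its finiteness and boundedness hypotheses; the truncation to finitely many $D_j$, together with $\sum_j\omega(D_j)<\infty$, handles this. If one prefers to avoid the principal lemma, the following is an alternative for $1<s\le2$. Use the pointwise inequality $N^s\le s\sum_j\mathbf 1_{D_j}\bigl(\sum_{k\ge j}\mathbf 1_{D_k}\bigr)^{s-1}$, then apply conditional Jensen to the concave power $s-1\le1$ together with the same packing bound. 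For larger $s$ the principal lemma is the natural tool.
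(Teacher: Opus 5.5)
Your proposal is correct and follows the paper's proof. Both arguments apply the principal lemma under $\mu_\omega$ with $\alpha_k=\mathbf 1_{D_k}$ and $w=1$, and then bound the conditional tail count by $\eta^{-1}[\omega]_{A_\infty}$ using the linear $A_\infty$ packing estimate. The only difference is cosmetic: you invoke the unconditional Tanaka--Terasawa lemma after an explicit truncation, whereas the paper applies its conditional version, which already contains that truncation, and then integrates.
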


\begin{proof}
The weighted conditional expectation identity
\eqref{eq: weighted cond exp identity} and the linear packing
estimate \eqref{eq: linear sparse sum controlled by Ainfty} give
\[
\E_\omega\Bigl[
\sum_{k\ge j}\mathbf1_{D_k}
\,\Bigm|\,\mathcal F_{\tau_j}
\Bigr]
\le
\frac1{\omega_j}
\E\Bigl[
\sum_{k\ge j}\omega_k\mathbf1_{E_k}
\,\Bigm|\,\mathcal F_{\tau_j}
\Bigr]
\le\eta^{-1}[\omega]_{A_\infty}.
\]
Applying Lemma~\ref{lemma: principal lemma conditional version}
under \(\mu_\omega\), with coefficients
\(\mathbf1_{D_k}\) and \(w=1\), and integrating over \(X\) yields
\[
\int_X\Bigl(\sum_{j\ge0}\mathbf1_{D_j}
\Bigr)^s\,\mathrm d\mu_\omega
\lesssim_s
\sum_{j\ge0}\int_{D_j}
\Bigl(
\E_\omega\Bigl[
\sum_{k\ge j}\mathbf1_{D_k}
\,\Bigm|\,\mathcal F_{\tau_j}
\Bigr]
\Bigr)^{s-1}\,\mathrm d\mu_\omega
\le
\eta^{1-s}[\omega]_{A_\infty}^{s-1}
\sum_{j\ge0}\omega(D_j).\qedhere
\]
\end{proof}

\subsection{Proof of the weak-type theorem}
\label{sec:weak-type-proofs}

For \(p>r\), Lemma~\ref{lem:weak-counting} controls the tail of the coefficient layer decomposition below, while the remaining layers can be summed geometrically.
For square functions, the earlier direct argument in the dyadic setting in \cite[Section~4]{DomingoSalazarLaceyRey2016} incurs an additional logarithmic factor.
This loss can be avoided either by testing estimates, as in \cite[Theorem~1.2]{hytonen_weak_2017}, or by our counting estimate.
Our counting argument additionally covers the endpoint \(p=1\) when \(0<r<1\), which is not possible using the testing approach of  \cite{hytonen_weak_2017}.

\begin{proof}[Proof of Theorem~\ref{thm: sparse operators weak type bound}]
We first prove part \ref{item: sparse operators weak type bound p not 1}. Let \(f\in L^p_\sigma\) be nonnegative. By homogeneity, we shall see that it suffices to estimate \[\omega(\{\mathcal A_{\mathcal S}^r(f\sigma)>1\}).\]
Let \(D_j^m\) be the coefficient layers from Lemma~\ref{lem:weak-coefficient-layer}\ref{it:weakcoeff1}. The lower bound on each layer and Lemma~\ref{lem:weak-coefficient-layer}\ref{it:weakcoeff1} give
\begin{equation}
\label{eq:weak-layer-mass}
\sum_{j\ge0}\omega(D_j^m)
\lesssim_{p,\eta}
\left(1-\frac\eta2\right)^{-mp}
[\omega,\sigma]_{A_p}\|f\|_{L^p_\sigma}^p.
\end{equation}
Thus \(\sum_{j\ge0}\omega(D_j^m)<\infty\) for every \(m\).
On \(\{M_{1,\mathcal S}(f\sigma)\le1\}\), every nonzero coefficient belongs to exactly one layer. Using the upper bound on each layer and Lemma~\ref{lem:weak-sequential-maximal}\ref{item:weak-sequential-maximal-p-general} we obtain
\begin{equation}
\label{eq:weak-layer-reduction}
\begin{aligned}
\omega\bigl(\{\mathcal A_{\mathcal S}^r(f\sigma)>1\}\bigr)
&\le [\omega,\sigma]_{A_p}\|f\|_{L^p_\sigma}^p+
\omega\Bigl(
\Bigl\{
\sum_{m\ge0}\left(1-\frac\eta2\right)^{mr}
\sum_{j\ge0}\mathbf1_{D_j^m}>1
\Bigr\}
\Bigr).
\end{aligned}
\end{equation}

If \(r>p\), Chebyshev's inequality,
\eqref{eq:weak-layer-reduction}, and \eqref{eq:weak-layer-mass} give
\begin{equation}
\label{eq:weak-above-diagonal-level}
\omega\bigl(\{\mathcal A_{\mathcal S}^r(f\sigma)>1\}\bigr)
\lesssim_{p,\eta}
[\omega,\sigma]_{A_p}\|f\|_{L^p_\sigma}^p
\sum_{m\ge0}\left(1-\frac\eta2\right)^{m(r-p)}
=
\frac{[\omega,\sigma]_{A_p}\|f\|_{L^p_\sigma}^p}
{1-\left(1-\frac\eta2\right)^{r-p}}.
\end{equation}
However, for \(0<r\le p\), this geometric series diverges.
We therefore use the counting estimate in Lemma~\ref{lem:weak-counting} for the layers with large \(m\).
Take \(s=2p/r\), so that \(r-p/s=r/2>0\). 
Lemma~\ref{lem:weak-counting} and \eqref{eq:weak-layer-mass} give
\[
\Bigl\|\sum_{j\ge0}\mathbf1_{D_j^m}\Bigr\|_{L^s_\omega}^s
\lesssim_{p,r,\eta}
[\omega,\sigma]_{A_p}\|f\|_{L^p_\sigma}^p
[\omega]_{A_\infty}^{s-1}
\Bigl(1-\frac\eta2\Bigr)^{-mp}.
\]
Choose \(m_0\) to be the smallest nonnegative integer such that
\(
(1-\frac\eta2)^{m_0r}\le [\omega]_{A_\infty}^{-1}.
\)
By the choice of \(m_0\),
\[
\Bigl(1-\frac\eta2\Bigr)^{m_0r}\simeq_{r,\eta}[\omega]_{A_\infty}^{-1}\qquad\text{and}
\qquad m_0\lesssim_{r,\eta}\log(e+[\omega]_{A_\infty}).
\]

Splitting the sum in \eqref{eq:weak-layer-reduction} at \(m_0\)
gives
\[
\begin{aligned}
\omega\bigl(\{\mathcal A_{\mathcal S}^r(f\sigma)>1\}\bigr)
\le [\omega,\sigma]_{A_p}\|f\|_{L^p_\sigma}^p
&+
\omega\Bigl(\Bigl\{
\sum_{m<m_0}\Bigl(1-\frac\eta2\Bigr)^{mr}
\sum_{j\ge0}\mathbf1_{D_j^m}>\frac12
\Bigr\}\Bigr)\\
&+
\omega\Bigl(\Bigl\{
\sum_{m\ge m_0}\Bigl(1-\frac\eta2\Bigr)^{mr}
\sum_{j\ge0}\mathbf1_{D_j^m}>\frac12
\Bigr\}\Bigr).\\
&=: [\omega,\sigma]_{A_p}\|f\|_{L^p_\sigma}^p + \mathrm{I} + \mathrm{II}
\end{aligned}
\]
For I, we have by Chebyshev's inequality and
\eqref{eq:weak-layer-mass} 
\begin{align*}
\mathrm{I}
&\lesssim_{p,\eta}
[\omega,\sigma]_{A_p}\|f\|_{L^p_\sigma}^p
\sum_{m<m_0}\left(1-\frac\eta2\right)^{m(r-p)}
\lesssim_{p,r,\eta}
[\omega,\sigma]_{A_p}\|f\|_{L^p_\sigma}^p
\begin{cases}
[\omega]_{A_\infty}^{p/r-1},&r<p,\\[1mm]
\log(e+[\omega]_{A_\infty}),&r=p,
\end{cases}
\end{align*}
where we used that when \(r=p\), the sum equals \(m_0\), yielding the logarithmic bound.
For II we have by Chebyshev's and Minkowski's
inequalities
\begin{align*}
\mathrm{II}
&\lesssim_s
\Bigl(
\sum_{m\ge m_0}\left(1-\frac\eta2\right)^{mr}
\Bigl\|\sum_{j\ge0}\mathbf1_{D_j^m}\Bigr\|_{L^s_\omega}
\Bigr)^s\\*
&\lesssim_{p,r,\eta}
[\omega,\sigma]_{A_p}\|f\|_{L^p_\sigma}^p
[\omega]_{A_\infty}^{s-1}
\Bigl(\sum_{m\ge m_0}\left(1-\frac\eta2\right)^{mr/2}\Bigr)^s\\*
&\lesssim_{p,r,\eta}
[\omega,\sigma]_{A_p}\|f\|_{L^p_\sigma}^p
[\omega]_{A_\infty}^{s-1}\left(1-\frac\eta2\right)^{m_0p}\\*
\lesssim_{p,r,\eta}
[\omega,\sigma]_{A_p}\|f\|_{L^p_\sigma}^p
[\omega]_{A_\infty}^{p/r-1}.
\end{align*}
The last inequality uses the choice of \(m_0\) and \(s=2p/r\).
Since \([\omega]_{A_\infty}\ge1\), the two estimates and \eqref{eq:weak-layer-reduction} give the required level-set bound for \(r\le p\).
Applying these bounds and \eqref{eq:weak-above-diagonal-level} to \(f/\lambda\), for \(\lambda>0\), gives the bounds at level \(\lambda\), proving part~\ref{item: sparse operators weak type bound p not 1}.

For part~\ref{thm:sparse-weak-endpoint}, take \(f\ge0\) in \(L^1_v\) and repeat the
argument with \(p=1\), replacing \(f\sigma\) by \(f\) and
\([\omega,\sigma]_{A_p}\|f\|_{L^p_\sigma}^p\) by
\([\omega,v]_{A_1}\|f\|_{L^1_v}\).
Use the layers from Lemma~\ref{lem:weak-coefficient-layer}\ref{it:weakcoeff2}
and the maximal estimate from Lemma~\ref{lem:weak-sequential-maximal}\ref{item:weak-sequential-maximal-p-1}.
When \(r\le1\), apply Lemma~\ref{lem:weak-counting} with \(s=2/r\).
The same calculations and homogeneity prove part~\ref{thm:sparse-weak-endpoint}.
\end{proof}

\section{Applications}
\label{sec:applications}
In this section we apply our weighted estimates for sparse operators to Doob's
maximal operator and the Rubio de Francia square function.

\subsection{Sparse domination of Doob's maximal operator}
\label{sec:doob-maximal}
In this subsection we prove Corollary~\ref{cor: max op ap ainfty bound}. We first prove a sparse bound for the maximum over finitely many times. Since \(\mu\) is \(\sigma\)-finite, for each \(f\) the essential supremum defining \(Mf\) can be taken over a countable set of times.
The strong and weak estimates for \(M\) then follow from Theorems~\ref{thm: sparse operators ap ainfty bound} and \ref{thm: sparse operators weak type bound}, respectively, by passage to the limit.
For \(t_1<\cdots<t_N\), set
\[
M_{t_1,\ldots,t_N}f:=\max_{1\le k\le N}\E[|f|\mid\mathcal F_{t_k}].
\]
The following proposition gives a pointwise sparse bound for this finite-time maximal operator for every \(r>0\). We will use it with \(r=p\) for the strong-type estimate and with \(r>p\), for instance \(r=2p\), for the weak-type estimate.
\begin{proposition}
\label{prop max op ptw by sparse}
Let \(1\le s<\infty\), let \(t_1<\cdots<t_N\), and let \(f\in L^s(X,\mu)\).
Then there exists a \(\tfrac12\)-sparse sequence \(\mathcal S_N=\{\tau_j\}_{j=0}^\infty\), taking values in \(\{t_1,\ldots,t_N,\infty\}\), such that, for all \(r>0\),
\[
M_{t_1,\ldots,t_N}f \le 2\mathcal A_{\mathcal S_N}^r(f)
\qquad\mu\text{-almost everywhere.}
\]
\end{proposition}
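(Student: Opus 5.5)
The plan is to build the sparse sequence by the principal stopping construction of Lemma~\ref{lem:weighted-principal-stopping-sequence}, applied with the unweighted measure and the finite sequence of deterministic times. Concretely, I take \(\sigma\equiv1\) (so \(\mu_\sigma=\mu\)). I also take the increasing sequence of stopping times \(T_k:=t_{k+1}\) for \(0\le k\le N-1\) and \(T_k:=\infty\) for \(k\ge N\), and the function \(|f|\in L^s\). Since \(\mathcal F_\infty\) conditional expectations could be problematic, I would instead truncate the index set: run the recursion only over \(k\in\{0,\dots,N-1\}\), setting \(N_{j+1}=\infty\) whenever no later index exceeds twice the current average. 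The lemma's proof only uses the tower property along the discrete filtration \(\mathcal F_{t_1}\subseteq\cdots\subseteq\mathcal F_{t_N}\), and \(|f|\in L^s_\mu\) makes each \(f_k=\E[|f|\mid\mathcal F_{t_{k+1}}]\) well defined. This yields stopping times \(\tau_j:=T_{N_j}\) with values in \(\{t_1,\dots,t_N,\infty\}\) that are \(\tfrac12\)-sparse with respect to \(\mu\). Moreover, \(f_k\le 2f_{N_j}\) on \(\{N_j\le k<N_{j+1}\}\).

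Next comes the pointwise domination. Fix \(x\) and \(k\in\{0,\dots,N-1\}\). If \(f_k(x)=0\), there is nothing to prove. Otherwise \(k\ge N_0(x)\), so there is a unique \(j\) with \(N_j(x)\le k<N_{j+1}(x)\). In particular \(\tau_j(x)<\infty\), that is, \(x\in E_j\). Hence
\[
\E[|f|\mid\mathcal F_{t_{k+1}}](x)\le 2\,\E[|f|\mid\mathcal F_{\tau_j}](x)\mathbf 1_{E_j}(x)\le 2\Bigl(\sum_{i\ge0}\E[|f|\mid\mathcal F_{\tau_i}]^r\mathbf 1_{E_i}\Bigr)^{1/r}(x)=2\mathcal A^r_{\mathcal S_N}(f)(x),
\]
since a single nonnegative term is bounded by the \(\ell^r\) sum for every \(r>0\). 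Taking the maximum over \(k\) gives the claim. Only null sets arise from choosing versions of the finitely many conditional expectations, which is why the statement holds \(\mu\)-almost everywhere. I also need to identify \(f_{N_j}\) with \(\E[|f|\mid\mathcal F_{\tau_j}]\) on \(\{N_j=k\}\). This holds because \(\{N_j=k\}\in\mathcal F_{t_{k+1}}\) and \(\tau_j=t_{k+1}\) there, which is the standard localization of stopped conditional expectations.

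The only real obstacle is bookkeeping. First, the lemma is stated for an infinite increasing sequence \(\{T_k\}\) with \(f\in L^p_\sigma\), \(p\ge1\); I handle this with the truncation above, noting that \(N_j\ge N\) is interpreted as \(\tau_j=\infty\). Second, I must check that \(\tau_j\) is indeed a stopping time for the continuous filtration. This holds since \(\{\tau_j\le t\}=\bigcup_k\{N_j=k\}\cap\{t_{k+1}\le t\}\) with \(\{N_j=k\}\in\mathcal F_{t_{k+1}}\), as in the lemma's proof. Sparsity then transfers verbatim from the lemma with \(\sigma=1\).
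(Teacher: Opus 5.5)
Your proposal is correct and follows the paper's proof. Both apply Lemma~\ref{lem:weighted-principal-stopping-sequence} with weight \(1\) to \(|f|\), and both bound each \(\E[|f|\mid\mathcal F_{t_k}]\) by a single term of \(\mathcal A^r_{\mathcal S_N}(f)\). The only difference is cosmetic: the paper runs the lemma along \(t_1,\ldots,t_N,t_N,\ldots\), so the lemma applies verbatim, because the repeated \(t_N\) never triggers a new principal index. This is equivalent to your truncation and makes any discussion of \(T_k=\infty\) or \(\mathcal F_\infty\) unnecessary.
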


\begin{proof}
Apply Lemma~\ref{lem:weighted-principal-stopping-sequence} with weight \(1\) to \(|f|\) along the sequence \(t_1,\ldots,t_N,t_N,\ldots\). Let \(\mathcal S_N=\{\tau_j\}_{j=0}^\infty\) be the resulting principal sequence. By the lemma, \(\mathcal S_N\) is \(\tfrac12\)-sparse.

For all \(1\le k\le N\), the estimate in Lemma~\ref{lem:weighted-principal-stopping-sequence} gives, almost everywhere, either
\(\E[|f|\mid\mathcal F_{t_k}]=0\), or
\[
\E[|f|\mid\mathcal F_{t_k}]
\le
2\E[|f|\mid\mathcal F_{\tau_j}]
\le
2\mathcal A_{\mathcal S_N}^r(f)
\]
for some \(j\). Taking the maximum over \(1\le k\le N\) proves the result.
\end{proof}
We now pass from the finite-time estimate to the full maximal operator.
\begin{proof}[Proof of Corollary~\ref{cor: max op ap ainfty bound}]
Let \(1\le p<\infty\). For \(p>1\), let \(\tfrac g\sigma\in L^p_\sigma\), while for \(p=1\), let \(g\in L^1_v\).
Since \(\mu\) is \(\sigma\)-finite, the essential supremum of any family of nonnegative measurable functions agrees \(\mu\)-almost everywhere with the supremum of a countable subfamily.
Thus we may choose times \(t_k\in\mathbb R\), depending on \(g\), such that
\[
M(g)
=
\sup_{k\ge1}\E[|g|\mid\mathcal F_{t_k}]
\qquad\mu\text{-almost everywhere.}
\]
It therefore suffices to prove the estimates for finite maxima, with constants independent of the number and choice of times.

For \(m\ge1\), define
\[
g_m:=
\begin{cases}
g\mathbf1_{\{\sigma\le m\}}, & p>1,\\
g\mathbf1_{\{v\ge m^{-1}\}}, & p=1.
\end{cases}
\]
Then \(g_m\in L^p(X,\mu)\), so we can apply Proposition~\ref{prop max op ptw by sparse} with \(s=p\) and \(f=g_m\), which gives a \(\tfrac12\)-sparse sequence \(\mathcal S_N\), which may depend on \(m\).
For \(p>1\), taking \(r=p\) gives
\[
\|M_{t_1,\ldots,t_N}g_m\|_{L^p_\omega}
\le
2\|\mathcal A_{\mathcal S_N}^p(g_m)\|_{L^p_\omega}
\lesssim_p
[\omega,\sigma]_{A_p}^{1/p}
\bigl([\sigma]_{A_\infty}^{1/p}+1\bigr)
\Bigl\|\frac{g_m}{\sigma}\Bigr\|_{L^p_\sigma}
\lesssim_p
[\omega,\sigma]_{A_p}^{1/p}
[\sigma]_{A_\infty}^{1/p}
\Bigl\|\frac g\sigma\Bigr\|_{L^p_\sigma}.
\]
Here we used Theorem~\ref{thm: sparse operators ap ainfty bound} and \([\sigma]_{A_\infty}\ge1\).
The same sparse sequence gives the pointwise bound with
\(r=2p\). Applying
Theorem~\ref{thm: sparse operators weak type bound} yields
\[
\|M_{t_1,\ldots,t_N}g_m\|_{L^{p,\infty}_\omega}
\lesssim_p
\begin{cases}
[\omega,\sigma]_{A_p}^{1/p}
\bigl\|\frac g\sigma\bigr\|_{L^p_\sigma}, & p>1,\\[1mm]
[\omega,v]_{A_1}\|g\|_{L^1_v}, & p=1.
\end{cases}
\]
Both estimates are uniform in \(m\) and \(N\).

Since \(|g_m|\uparrow |g|\),
conditional monotone convergence gives
\[
M_{t_1,\ldots,t_N}g_m
\uparrow
M_{t_1,\ldots,t_N}g.
\]
First let \(m\to\infty\), and then let \(N\to\infty\).
The strong estimate passes to the limit by monotone convergence,
and the weak estimate by continuity from below of the level sets
at each \(\lambda>0\).
\end{proof}
\subsection{Application to the Rubio de Francia square function}
\label{sec:rubio-weak-endpoint}
In this subsection we apply Theorem~\ref{thm: sparse operators weak type bound} to obtain a mixed two weight weak-type \((2,2)\) estimate for the Rubio de Francia square function. In the one weight setting, this removes the logarithmic factor in \cite[Corollary 1.6]{GRS21}. We work on \(\mathbb R\) with Lebesgue measure and a dyadic filtration. For a collection \(\mathscr I\) of pairwise disjoint frequency intervals, define
\[
R_{\mc I}f
:=\Bigl(
\sum_{I\in\mc I}
\bigl|\mathcal F^{-1}(\mathbf1_I\mathcal F(f))\bigr|^2
\Bigr)^{1/2},\]
where
\[
\mathcal Ff(\xi):
=\int_{\mathbb R}f(x)e^{-2\pi i x\xi}\,\mathrm dx
\qquad\text{and}\qquad
\mathcal F^{-1}g(x)
:=\int_{\mathbb R}g(\xi)e^{2\pi i x\xi}\,\mathrm d\xi.
\]

\begin{proof}[Proof of Corollary~\ref{cor:rubio-weak-endpoint}]
For compactly supported \(f\in L^2(\mathbb R)\), \cite[Theorem~A]{DFPR25} and the three lattice lemma reduce the proof to estimating sparse operators on three shifted dyadic grids.
Applying
part~\ref{thm:sparse-weak-endpoint}
of Theorem~\ref{thm: sparse operators weak type bound}
with \(r=\tfrac12\) gives
\[
\Bigl\|
\sum_{Q\in\mathcal S}
\ip{|f|^2}_Q^{1/2}\mathbf1_Q
\Bigr\|_{L^{2,\infty}_\omega}
=
\bigl\|\mathcal A_{\mathcal S}^{1/2}(|f|^2)
\bigr\|_{L^{1,\infty}_\omega}^{1/2}\lesssim
[\omega,v]_{A_1}^{1/2}
[\omega]_{A_\infty}^{1/2}
\|f\|_{L^2_v}.
\]
The result follows by density.
\end{proof}

\section{Finite-complexity dyadic sparse forms}
\label{sec:finite-complexity}

We now turn to the finite-complexity dyadic setting in \(\mathbb R^d\). Let {\(\mu\) be a locally finite Borel measure on \(\mathbb R^d\), and let} $\ms{D}$ be a dyadic lattice, see \cite[Definition 2.1]{LernerNazarov2019}. A weight $\sigma$ is a {locally $\mu$-integrable function that is positive \(\mu\)-almost everywhere}. As before, we write \(L^p_\sigma=L^p(\mathbb R^d,\mu_\sigma)\) and use \(\sigma(Q)\) for the weighted measure of \(Q\). We set \(\langle f\rangle_Q^\sigma:=\sigma(Q)^{-1}\int_Q f\,\mathrm d\mu_\sigma\).

{For \(Q\in\mathscr D\) and \(r\in\mathbb N_0\), let \(Q^{(r)}\) denote the \(r\)-th dyadic ancestor of \(Q\), with \(Q^{(0)}=Q\). If \(Q,P\in\mathscr D\) share a common dyadic ancestor, define
\[
\distD(Q,P):=\min\{r+t:r,t\in\mathbb N_0,\ Q^{(r)}=P^{(t)}\},
\]
and set \(\distD(Q,P)=\infty\) otherwise. A relation \(\Gamma\subseteq\mathscr D\times\mathscr D\) has complexity at most \(\kappa\in\mathbb N_0\) if \(\distD(Q,P)\le\kappa\) for every \((Q,P)\in\Gamma\).

For \(\Gamma\subseteq\mathscr D\times\mathscr D\) and \(\mathcal S\subseteq\mathscr D\), recall that}
\[
\Chat_{\mathcal S,\Gamma}(f,g)
:=
\sum_{\substack{Q,P\in\mathcal S:(Q,P)\in\Gamma}}
\avg{f}{Q}\avg{g}{P}\,c_{Q,P},
\qquad
c_{Q,P}:=\min\{\mu(Q),\mu(P)\}.
\]
The main difficulty compared with classical sparse forms is that {the two averages may be taken over different cubes.} Finite complexity allows us to recover enough structure: we first decompose the relation into finitely many branches on which each cube has a unique partner and the relevant nesting is controlled.

Afterwards, we decompose $\Chat_{\mathcal S,\Gamma}$ into these branches and for each branch, we then associate a positive operator and characterize its two weight norm by direct and dual testing conditions. The proof follows the classical principal-cube strategy, {with an additional geometric argument accounting for the fact that the principal cubes associated with the two averages need not be nested.}

\subsection{Branch decomposition}
We start by decomposing a relation \(\Gamma\subseteq\ms{D}\times\ms{D}\) into pieces where each cube occurs at most once in each coordinate, which we will call branches. More precisely, a relation
\(\Gamma_0\subseteq\ms{D}\times\ms{D}\) is a \emph{branch} if it has the form
\[
\Gamma_0=\cbrace{(Q,P_Q):Q\in\mc{S}_0},
\]
where \(\mc{S}_0\subseteq\ms{D}\) and the map \(Q\mapsto P_Q\) is injective.
Writing \(H_Q\) for the minimal common dyadic ancestor of \(Q\) and \(P_Q\), we additionally require that
\begin{equation}
\label{eq:branch-nesting}
Q\subsetneq R\Longrightarrow H_Q\subseteq R,
\qquad
P_Q\subsetneq P_R\Longrightarrow H_Q\subseteq P_R
\end{equation}
for all \(Q,R\in\mc{S}_0\).
Thus, if \(Q\) lies strictly inside \(R\), then its partner \(P_Q\)
must also lie inside \(R\), and the same holds with the two coordinates
interchanged.

\begin{proposition}
\label{prop:finite-complexity-branch-decomposition}
Let \(\Gamma\subseteq\ms{D}\times\ms{D}\) have complexity at most
\(\kappa\). There is a constant \(M_\kappa\), only depending on $\kappa$ and $d$, and branches \(\Gamma_1,\ldots,\Gamma_{M_\kappa}\) such that
\[
\Gamma
=
\bigsqcup_{j=1}^{M_\kappa}\Gamma_j.
\]
\end{proposition}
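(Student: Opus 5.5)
The plan is to sort the pairs of \(\Gamma\) by a finite amount of combinatorial data, namely their ``shape'' together with the generation of the first cube modulo \(\kappa+1\). Each class is then a graph of an injective map, and the mod \(\kappa+1\) condition forces the nesting requirement \eqref{eq:branch-nesting}.

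\textbf{Step 1: the shape of a pair.} Fix \((Q,P)\in\Gamma\). Since \(\distD(Q,P)\le\kappa<\infty\), the cubes share a common ancestor, and the minimal common ancestor \(H=H_{(Q,P)}\) satisfies \(H=Q^{(r)}=P^{(t)}\), where \(r+t=\distD(Q,P)\le\kappa\). Indeed, any \(r,t\) with \(Q^{(r)}=P^{(t)}\) give a common ancestor containing the minimal one, so the minimum in the definition of \(\distD\) is attained at \(H\). Record:
\begin{itemize}
\item the pair \((r,t)\);
\item the relative position \(a\) of \(Q\) inside \(H\), that is, which of the \(2^{dr}\) descendants of \(H\) of generation \(r\) it is (encoded by the child-path from \(H\) down to \(Q\));
\item the relative position \(b\) of \(P\) inside \(H\), one of \(2^{dt}\) possibilities;
\item the residue \(c\) of the generation of \(Q\) modulo \(\kappa+1\).
\end{itemize}
The generation is well defined in a dyadic lattice up to a global shift, which is irrelevant here. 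The number of labels \((r,t,a,b,c)\) is at most
\[
M_\kappa:=(\kappa+1)\sum_{r+t\le\kappa}2^{d(r+t)},
\]
which depends only on \(\kappa\) and \(d\). Let \(\Gamma_1,\dots,\Gamma_{M_\kappa}\) be the label classes (some possibly empty). They are disjoint and cover \(\Gamma\), since each pair receives exactly one label.

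\textbf{Step 2: each class is the graph of an injective map.} In a fixed class, \(Q\) determines \(H=Q^{(r)}\), and then \(P\) is the descendant of \(H\) at position \(b\). Hence each \(Q\) has a unique partner \(P_Q\), and \(\Gamma_j=\{(Q,P_Q):Q\in\mathcal S_j\}\). Conversely, \(P\) determines \(H=P^{(t)}\) and then \(Q\) via \(a\), so \(Q\mapsto P_Q\) is injective. Moreover, \(H_Q\) in the sense of the branch definition coincides with \(H\) above.

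\textbf{Step 3: the nesting condition \eqref{eq:branch-nesting}.} This is the only point needing care, and it is exactly what the residue \(c\) is for. Let \(Q,R\in\mathcal S_j\) with \(Q\subsetneq R\). Then \(R=Q^{(i)}\) with \(i\ge1\), and since the generations of \(Q\) and \(R\) agree modulo \(\kappa+1\), we get \(i\ge\kappa+1>r\). Therefore \(H_Q=Q^{(r)}\subseteq Q^{(i)}=R\).

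For the second implication, note that within a class the generation of \(P_Q\) equals the generation of \(Q\) shifted by the fixed amount \(t-r\). Hence the generations of partners are also congruent modulo \(\kappa+1\). So if \(P_Q\subsetneq P_R\), then \(P_R=P_Q^{(i)}\) with \(i\ge\kappa+1>t\), and \(H_Q=P_Q^{(t)}\subseteq P_R\).

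This shows that every \(\Gamma_j\) is a branch, which proves Proposition~\ref{prop:finite-complexity-branch-decomposition}. The main potential obstacle is the nesting property, which fails without the modulo-\(\kappa+1\) splitting when an intermediate ancestor of \(Q\) below \(H_Q\) also lies in \(\mathcal S_0\). The generation-residue coloring removes this at the cost of the harmless factor \(\kappa+1\) in \(M_\kappa\).
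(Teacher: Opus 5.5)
Your proof is correct and follows essentially the same route as the paper. Both arguments split $\Gamma$ into finitely many graphs of injective maps, then refine by dyadic generation modulo $\kappa+1$ so that any strict nesting inside a class spans at least $\kappa+1$ generations, which forces $H_Q\subseteq R$ and $H_Q\subseteq P_R$. The only difference is how you get injectivity: the paper numbers the partners of each cube on both sides, groups pairs by the ordered pair of numbers, and then needs the residues of both coordinates, whereas your shape label $(r,t,a,b)$ lets each coordinate determine the other and fixes the generation offset, so one residue suffices and the count is slightly better.
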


\begin{proof}
Let $m_{\kappa}$ be the number of cubes at dyadic distance at most \(\kappa\) from a fixed cube, which only depends on $\kappa$ and $d$.
For each fixed \(Q\in \ms{D}\), number the cubes \(P\in \ms{D}\) such that \((Q,P)\in\Gamma\). Independently, for each fixed \(P\in\ms D\), number the cubes \(Q\in\ms D\) such that \((Q,P)\in\Gamma\). Thus every pair \((Q,P)\) receives an ordered pair of numbers. At most \(m_{\kappa}\) numbers are needed on either side, so grouping pairs with the same ordered pair produces at most \(m_{\kappa}^2\) groups.

Within one group, if \((Q,P)\) and \((Q,P')\) have the same first coordinate, then \(P\) and \(P'\) have the same number in the enumeration of the partners of \(Q\), and hence \(P=P'\). Similarly, if \((Q,P)\) and \((Q',P)\) have the same second coordinate, then \(Q=Q'\). Thus each group is the graph of an injective map.

Next, split each group according to the generation of \(Q\) modulo \(\kappa+1\) and the generation of \(P\) modulo \(\kappa+1\). Each subgroup is still the graph of an injective map, and every original group gives at most \((\kappa+1)^2\) subgroups. Thus the total number of nonempty groups $M_{\kappa}$ is at most $(\kappa+1)^2 m_{\kappa}^2$.
Denote these groups by $\Gamma_1,\ldots,\Gamma_{M_\kappa}.$
For each \(j\), let \(\mc{S}_j\) be the set of first coordinates
in \(\Gamma_j\), and write \(P_Q\) for the unique partner of
\(Q\in\mc{S}_j\). Then
\[
\Gamma_j=\cbrace{(Q,P_Q):Q\in\mc{S}_j},
\]
and \(Q\mapsto P_Q\) is injective.

Fix \(Q,R\in\mathcal S_j\). If \(P_Q\subsetneq P_R\), then their generations differ by a multiple of \(\kappa+1\). Hence \(P_Q\) lies at least \(\kappa+1\) generations below \(P_R\). Since \(\distD(Q,P_Q)\le\kappa\), the cube \(H_Q\) lies at most \(\kappa\) generations above \(P_Q\). Hence \(H_Q\subseteq P_R\). The same argument shows that \(Q\subsetneq R\) implies \(H_Q\subseteq R\). Thus each \(\Gamma_j\) is a branch.
\end{proof}

\subsection{Testing condition on a branch}
Fix a branch
\[
\Gamma_j=\cbrace{(Q,P_Q):Q\in\mc{S}_j}.
\]
Given weights \(\omega\) and \(\sigma\), define the positive operator
\begin{equation}\label{eq:branchoperator}
\mathsf T_j^\sigma f
:=
\sum_{Q\in\mc{S}_j}
\frac{c_{Q,P_Q}}{\mu(P_Q)}
\avg{\sigma}{Q}\,\avgsig{f}{Q}\,\mathbf 1_{P_Q}.
\end{equation}
For \(R\in\mathscr D\), define the localized direct and dual testing sums by
\[
\mathsf T_{j,R}(\sigma)
:=
\sum_{\substack{Q\in\mathcal S_j\\Q\subseteq R}}
\frac{c_{Q,P_Q}}{\mu(P_Q)}
\avg{\sigma}{Q}\mathbf 1_{P_Q}
\qquad
\text{and}
\qquad
\mathsf T_{j,R}^*(\omega)
:=
\sum_{\substack{Q\in\mathcal S_j\\P_Q\subseteq R}}
\frac{c_{Q,P_Q}}{\mu(Q)}
\avg{\omega}{P_Q}\mathbf 1_Q.
\]
Note that the direct localization uses \(Q\subseteq R\), whereas the dual localization
uses \(P_Q\subseteq R\). The main result of this section is the following proposition.

\begin{proposition}
\label{prop:finite-complexity-testing} 
Let \(\Gamma_j\subseteq \ms{D} \times \ms{D}\) be a branch,
let \(1<p<\infty\), and let \(\sigma,\omega\) be weights. Set
\[
\mathfrak T_{p,j}
:=
\sup_{R\in\mathscr D}
\frac{\|\mathsf T_{j,R}(\sigma)\|_{L^p_\omega}}
{\sigma(R)^{1/p}},
\qquad\text{and}\qquad
\mathfrak T_{p,j}^*
:=
\sup_{R\in\mathscr D}
\frac{\|\mathsf T_{j,R}^*(\omega)\|_{L^{p'}_\sigma}}
{\omega(R)^{1/p'}}.
\]
Then
\[
\|\mathsf T_j^\sigma\|_{L^p_\sigma\to L^p_\omega}
\simeq_{p}
\mathfrak T_{p,j}+\mathfrak T_{p,j}^* .
\]
\end{proposition}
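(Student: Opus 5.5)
Necessity is the easy direction and I would do it first. For \(R\in\mathscr D\) and \(Q\subseteq R\) we have \(\avg{\sigma}{Q}=\avg{\sigma\mathbf 1_R}{Q}\). By positivity this gives \(\mathsf T_{j,R}(\sigma)\le \mathsf T_j^\sigma(\mathbf 1_R)\), so \(\mathfrak T_{p,j}\le\|\mathsf T_j^\sigma\|\). The formal adjoint of \(\mathsf T_j^\sigma\) with respect to the pairing \(\int \cdot\,\cdot\,\mathrm d\mu\), acting as \(L^{p'}_\omega\to L^{p'}_\sigma\), is
\[
g\mapsto \sum_{Q\in\mathcal S_j}\frac{c_{Q,P_Q}}{\mu(Q)}\avg{\omega}{P_Q}\avgsig{g}{P_Q}\mathbf 1_Q
\]
(with \(\omega\)-averages in place of \(\sigma\)-averages). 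Hence, by the same localization with \(P_Q\subseteq R\), we get \(\mathfrak T^*_{p,j}\le\|\mathsf T_j^\sigma\|\).

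For sufficiency I would follow the sufficiency proof of Theorem~\ref{thm: stoch t1 seq test} with \(p=q\), so \(\theta=\infty\), replacing principal stopping times by principal cubes.

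\textbf{Setup.} Take nonnegative \(f\in L^p_\sigma\) and \(g\in L^{p'}_\omega\). Build principal cubes \(\mathcal F\) for \(f\) with respect to \(\sigma\), and \(\mathcal G\) for \(g\) with respect to \(\omega\). Let \(\pi_{\mathcal F}(Q)\) be the minimal principal cube containing \(Q\), and \(\pi_{\mathcal G}(P_Q)\) the minimal principal cube containing \(P_Q\). Then
\[
\avgsig{f}{Q}\le 2\avgsig{f}{\pi_{\mathcal F}Q},
\]
and the analogous bound holds for \(g\). The pairing \(\langle \mathsf T_j^\sigma f,g\omega\rangle\) is a sum over \(Q\in\mathcal S_j\) of \(c_{Q,P_Q}\avg{\sigma}{Q}\avg{\omega}{P_Q}\avgsig{f}{Q}\avg{g}{P_Q}^{\omega}\).

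\textbf{Splitting.} I would split according to which principal cube is ``later''. Part I collects the \(Q\) with \(\pi_{\mathcal G}(P_Q)\subseteq\pi_{\mathcal F}(Q)\), and part II the rest. This is where the geometry differs from the martingale case: \(Q\) and \(P_Q\) are not nested, so \(F=\pi_{\mathcal F}Q\) and \(G=\pi_{\mathcal G}P_Q\) need not be comparable. The branch property \eqref{eq:branch-nesting} should fix this. If \(F\supsetneq Q\), then \(H_Q\subseteq F\), so \(P_Q\subseteq F\). Hence \(F\) and \(G\) both contain \(P_Q\) and are therefore nested. If \(F=Q\), I expect the same conclusion in all but boundedly many configurations. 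I would try to make this step rigorous by arranging, at the cost of a harmless enlargement of the principal families, that the principal cubes contain \(H_Q\). Concretely, I would replace the stopping data by the ancestors \(H_Q\) and use finite complexity, so that \(\mu\) never needs doubling: averages over \(Q\) are never compared with averages over \(H_Q\), only containment is used.

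\textbf{Estimate of I.} Fix \(F\in\mathcal F\). The terms in I are bounded by
\[
\avgsig{f}{F}\int \mathsf T_{j,F}(\sigma)\,\Bigl(\sum_{G}\avg{g}{G}^{\omega}\mathbf 1_{G}\Bigr)\,\mathrm d\mu_\omega ,
\]
where the inner sum runs over those \(G\) whose \(\pi_{\mathcal F}\)-parent region is \(F\). Since the \(Q\)'s range over \(Q\subseteq F\), the first factor is controlled by the direct testing sum. By Hölder, the \(L^p_\omega\) norm of \(\mathsf T_{j,F}(\sigma)\) is at most \(\mathfrak T_{p,j}\sigma(F)^{1/p}\). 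Next, sum over \(F\) using the Carleson embedding \(\sum_F \avgsig{f}{F}^p\sigma(F)\lesssim\|f\|^p_{L^p_\sigma}\), the dyadic analogue of Lemma~\ref{lemma: carleson embedding}. Finally, the geometric growth of \(g\)-averages along the chain of \(G\)'s gives
\[
\sum_F\Bigl\|\sum_{G}\avg{g}{G}^\omega\mathbf 1_G\Bigr\|_{L^{p'}_\omega}^{p'}\lesssim\|g\|^{p'}_{L^{p'}_\omega},
\]
exactly as in the martingale proof. Part II is the symmetric argument with \(\mathfrak T^*_{p,j}\), using the dual localization \(P_Q\subseteq R\).

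\textbf{Main obstacle.} The main obstacle is justifying that the restricted sum in I is genuinely dominated by \(\mathsf T_{j,F}(\sigma)\mathbf 1_{G}\) with \(G\subseteq F\). That is, after stopping, \(P_Q\) must lie inside \(F\) and the tail stays inside the testing region. This is the non-nested geometry that the branch nesting conditions \eqref{eq:branch-nesting} are designed to handle.
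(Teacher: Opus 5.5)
Your necessity argument is fine (modulo the typo \(\avgsig{g}{P_Q}\) for \(\langle g\rangle^{\omega}_{P_Q}\)). The skeleton of your sufficiency argument is the one the paper uses: principal cubes \(\mathcal F,\mathcal G\), a split by which principal cube is larger, testing on \(F\), the Carleson embedding, and geometric growth along chains.

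The gap is the step where you sum over \(F\) and claim bounded overlap of the \(g\)-pieces ``exactly as in the martingale proof''. In the martingale setting both principal sequences are read off at the same point, so the sets \(D_{ij}\) are automatically disjoint in \(j\). On a branch, \(Q\) and \(P_Q\) are different cubes, and this fails. Concretely, take \(Q\) in your Part~I, so \(F=\pi_{\mathcal F}(Q)\supseteq G=\pi_{\mathcal G}(P_Q)\). Suppose \(P_Q=G\) is itself principal. Nothing in \eqref{eq:branch-nesting} prevents some \(F'\in\mathcal F\) with \(G\subseteq F'\subsetneq F\) and \(Q\not\subseteq F'\); for example \(F'\) disjoint from \(Q\), or \(F'\subsetneq Q\) when \(P_Q\subsetneq Q\). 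Then \(\pi_{\mathcal F}(G)\neq F\). So if \(g_F\) only collects the \(G\) with \(\pi_{\mathcal F}(G)=F\), the term of \(Q\) is not dominated by \(\avgsig{f}{F}\int\mathsf T_{j,F}(\sigma)g_F\,\mathrm d\mu_\omega\). If instead \(g_F\) collects \(\pi_{\mathcal G}(P_Q)\) over all Part~I cubes \(Q\) with \(\pi_{\mathcal F}(Q)=F\), domination holds. But then \(\sum_F\|g_F\|_{L^{p'}_\omega}^{p'}\lesssim\|g\|_{L^{p'}_\omega}^{p'}\) requires showing that each \(G\) is attached to boundedly many \(F\), and you have not done this.

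This counting is exactly Lemma~\ref{lem:stopping-geometry}. In Part~I it rests on the \emph{second} nesting condition in \eqref{eq:branch-nesting}; your sketch only uses the first one, to make \(F\) and \(G\) comparable. The argument runs as follows.
\begin{itemize}
\item Fix \(G\in\mathcal G\) and the unique \(R\) with \(P_R=G\).
\item If \(Q\neq R\) and \(\pi_{\mathcal G}(P_Q)=G\), injectivity gives \(P_Q\subsetneq P_R\). Hence \(H_Q\subseteq G\), and so \(Q\subseteq G\).
\item Then \(\pi_{\mathcal F}(Q)\) and \(G\) are nested. If \(G\subseteq\pi_{\mathcal F}(Q)\), minimality forces \(\pi_{\mathcal F}(Q)=\pi_{\mathcal F}(G)\).
\item So each \(G\) meets at most two \(F\)'s, the second coming only from \(Q=R\).
\end{itemize}

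The same count disposes of your non-nested configuration. Non-nestedness forces \(Q=F\) and \(P_Q=G\), so it is just the exceptional case \(Q=R\). The paper simply puts it on the \(F\)-side via \(\mathcal Q_{\mathcal F}=\mathcal S_j\setminus\{Q:\pi_{\mathcal F}(Q)\subsetneq\pi_{\mathcal G}(P_Q)\}\). No enlargement of the principal families and no finite complexity is needed. This matters, because the proposition is stated for arbitrary branches, so finite complexity is not available.

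Finally, the obstacle you name, that \(P_Q\) must lie inside \(F\), is not one. The constant \(\mathfrak T_{p,j}\) controls the full \(L^p_\omega(\mathbb R^d)\) norm of \(\mathsf T_{j,F}(\sigma)\), whose summands \(\mathbf 1_{P_Q}\) may sit outside \(F\); only \(Q\subseteq F\) is used. The real issue is the overlap count.
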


Note that the branch in Proposition \ref{prop:finite-complexity-testing} does not need to
have finite complexity.
When \[\Gamma_j \subseteq \cbrace{(Q,Q):Q\in \ms{D}},\] Proposition \ref{prop:finite-complexity-testing} is the classical diagonal testing theorem, see \cite[Theorem~6.1]{HytonenA2remarks}, \cite[Theorem~1.11]{lacey_two_2010} with \(q=p\), and \cite[Theorem~2.1]{Treil}.

For the sufficiency proof of Proposition \ref{prop:finite-complexity-testing}, we will construct principal
collections \(\mathcal F\) and \(\mathcal G\) from \(f\) and \(g\).
{For a collection \(\mathcal A\subseteq\mathscr D\), we write
\(\pi_{\mathcal A}(Q)\) for the smallest cube in \(\mathcal A\)
containing \(Q\), whenever such a cube exists.}
When \(P_Q=Q\), the cubes \(\pi_{\mathcal F}(Q)\) and \(\pi_{\mathcal G}(P_Q)\) are nested, since
they both contain \(Q\). On a general branch, they need not be nested.
The following lemma replaces this nesting by a weaker geometric
alternative. We can split $\mc{S}_j$ into two families of cubes so that each \(\pi_{\mathcal G}(P_Q)\) is paired with at most $2$ cubes \(\pi_{\mathcal F}(Q)\) in the first family, and each \(\pi_{\mathcal F}(Q)\) is paired with at most $2$ cubes \(\pi_{\mathcal G}(P_Q)\) in the second family.

\begin{lemma}
\label{lem:stopping-geometry}
Let \(\Gamma_j = \cbrace{(Q,P_Q):Q\in\mathcal S_j}\) be a branch.
Let
\[
\mathcal F\subseteq\mathcal S_j,
\qquad
\mathcal G\subseteq\cbrace{P_Q:Q\in\mathcal S_j},
\]
and suppose that \(\pi_{\mathcal F}(Q)\) and \(\pi_{\mathcal G}(P_Q)\) are defined for every \(Q\in\mathcal S_j\).
Then \(\mc{S}_j\) can be partitioned as
\[
\mc{S}_j=\mathcal Q_{\mathcal F}\mathbin{\dot{\cup}}\mathcal Q_{\mathcal G}
\]
such that 
\begin{align*}
\#\{\pi_{\mathcal F}(Q):Q\in\mathcal Q_{\mathcal F},\ \pi_{\mathcal G}(P_Q)=G\}&\leq 2, &&G\in\mathcal G,\\
\#\{\pi_{\mathcal G}(P_Q):Q\in\mathcal Q_{\mathcal G},\ \pi_{\mathcal F}(Q)=F\}&\leq 2, && F\in\mathcal F.
\end{align*}
\end{lemma}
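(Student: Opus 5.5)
The plan is to use only the branch nesting property \eqref{eq:branch-nesting} and injectivity of \(Q\mapsto P_Q\), together with the minimality built into \(\pi_{\mathcal F}\) and \(\pi_{\mathcal G}\). For \(Q\in\mathcal S_j\), write \(F_Q:=\pi_{\mathcal F}(Q)\) and \(G_Q:=\pi_{\mathcal G}(P_Q)\).

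\emph{Step 1: a nesting dichotomy.} Since \(\mathcal F\subseteq\mathcal S_j\), we have \(F_Q=R\) for some \(R\in\mathcal S_j\) with \(Q\subseteq R\). If \(Q\subsetneq F_Q\), then \eqref{eq:branch-nesting} gives \(H_Q\subseteq F_Q\), and hence \(P_Q\subseteq F_Q\). Symmetrically, \(G_Q=P_R\) for some \(R\in\mathcal S_j\). If \(P_Q\subsetneq G_Q\), then the second half of \eqref{eq:branch-nesting} gives \(H_Q\subseteq G_Q\), and hence \(Q\subseteq G_Q\). From this we get the following cases.
\begin{enumerate}[(a)]
\item If \(Q\subsetneq F_Q\) and \(P_Q\subsetneq G_Q\), then both cubes contain \(H_Q\), so they are nested.
\item If \(Q=F_Q\) and \(P_Q\subsetneq G_Q\), then \(F_Q=Q\subseteq G_Q\).
\item If \(P_Q=G_Q\) and \(Q\subsetneq F_Q\), then \(G_Q=P_Q\subseteq F_Q\).
\end{enumerate}
The only configuration in which \(F_Q\) and \(G_Q\) may fail to be nested is the doubly degenerate one, \(Q=F_Q\) and \(P_Q=G_Q\).

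\emph{Step 2: the partition.} Define
\[
\mathcal Q_{\mathcal F}:=\{Q\in\mathcal S_j: G_Q\subseteq F_Q\}\cup\{Q\in\mathcal S_j:Q=F_Q,\ P_Q=G_Q\},
\qquad
\mathcal Q_{\mathcal G}:=\mathcal S_j\setminus\mathcal Q_{\mathcal F}.
\]
By Step 1, every \(Q\in\mathcal Q_{\mathcal G}\) satisfies \(F_Q\subsetneq G_Q\).

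\emph{Step 3: the count for \(\mathcal Q_{\mathcal F}\).} Fix \(G\in\mathcal G\) and consider \(Q\in\mathcal Q_{\mathcal F}\) with \(G_Q=G\).
\begin{itemize}
\item If \(P_Q=G\), then injectivity of \(Q\mapsto P_Q\) determines \(Q\) uniquely, so this case yields at most one cube \(F_Q\).
\item Otherwise \(P_Q\subsetneq G\), and Step 1 gives \(Q\subseteq G\). This case excludes the doubly degenerate configuration, so \(G\subseteq F_Q\). Then \(F_Q\) is a cube of \(\mathcal F\) containing \(G\), and any cube of \(\mathcal F\) containing \(G\) contains \(Q\). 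Minimality of \(F_Q=\pi_{\mathcal F}(Q)\) therefore forces \(F_Q=\pi_{\mathcal F}(G)\), a single cube.
\end{itemize}
Hence at most two distinct cubes \(F_Q\) occur.

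\emph{Step 4: the count for \(\mathcal Q_{\mathcal G}\).} Fix \(F\in\mathcal F\) and consider \(Q\in\mathcal Q_{\mathcal G}\) with \(F_Q=F\).
\begin{itemize}
\item If \(Q=F\), then \(Q\) is unique, giving at most one cube \(G_Q\).
\item Otherwise \(Q\subsetneq F\), so \(P_Q\subseteq F\subsetneq G_Q\). Minimality of \(\pi_{\mathcal G}(P_Q)\) then gives \(G_Q=\pi_{\mathcal G}(F)\), again a single cube.
\end{itemize}
Hence at most two distinct cubes \(G_Q\) occur.

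The main obstacle is Step 1, namely extracting nestedness from the branch axioms, since in general \(F_Q\) and \(G_Q\) are unrelated. The key points are that \(F_Q\) and \(G_Q\) are themselves first and second coordinates in the branch, because \(\mathcal F\subseteq\mathcal S_j\) and \(\mathcal G\subseteq\{P_Q\}\), and that the strict inclusions in \eqref{eq:branch-nesting} push \(H_Q\) into them. The doubly degenerate case is exactly where nestedness fails, and injectivity of \(Q\mapsto P_Q\) handles it at the cost of the second cube in each count.
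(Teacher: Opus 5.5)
Your proof is correct and follows the paper's argument. In both, branch nesting forces $Q\subseteq G$ (resp.\ $P_Q\subseteq F$) whenever the relevant inclusion is strict, injectivity of $Q\mapsto P_Q$ absorbs the one exceptional cube in each count, and minimality of $\pi_{\mathcal F}$ and $\pi_{\mathcal G}$ determines the remaining cube. The only difference is cosmetic. The paper simply sets $\mathcal Q_{\mathcal G}:=\{Q:\pi_{\mathcal F}(Q)\subsetneq\pi_{\mathcal G}(P_Q)\}$ and derives nestedness locally within each count. You instead first prove a global nesting dichotomy and place the doubly degenerate pairs in $\mathcal Q_{\mathcal F}$. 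You also write out the second count, which the paper leaves as ``similar''.
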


\begin{proof}
The sets
\[
\begin{aligned}
\mathcal Q_{\mathcal G}:=\cbrace{Q\in\mathcal S_j:\pi_{\mathcal F}(Q)\subsetneq \pi_{\mathcal G}(P_Q)},
\qquad
\mathcal Q_{\mathcal F}:=\mathcal S_j\setminus\mathcal Q_{\mathcal G}
\end{aligned}
\]
form a partition of \(\mathcal S_j\).
Fix \(G\in\mathcal G\) and let \(R\in\mathcal S_j\) be the unique cube such that \(P_R= G\).
Let \(Q\in\mathcal Q_{\mathcal F}\) satisfy \(\pi_{\mathcal G}(P_Q)=G\). Suppose first that \(Q\ne R\). Then \(P_Q\subsetneq P_R\) by injectivity, so \eqref{eq:branch-nesting} gives \(Q\subseteq G\). Since \(\pi_{\mathcal F}(Q)\) and \(G\) both contain \(Q\), they are nested, so the definition of \(\mathcal Q_{\mathcal F}\) gives \(G\subseteq \pi_{\mathcal F}(Q)\).
If there were an \(F\in\mathcal F\) with
\[
G\subseteq F\subsetneq \pi_{\mathcal F}(Q),
\]
then \(F\) would contain \(Q\), contradicting the minimality of \(\pi_{\mathcal F}(Q)\). Thus \(\pi_{\mathcal F}(Q)\) is the smallest member of \(\mathcal F\) containing \(G\). Hence all such \(\pi_{\mathcal F}(Q)\) for \(Q\ne R\) coincide, and the index \(Q=R\) gives at most one further possibility.
The second bound follows similarly. 
\end{proof}

We are now ready to prove the testing characterization.

\begin{proof}[Proof of Proposition~\ref{prop:finite-complexity-testing}]
For \(R\in\mathscr D\), positivity and duality give
\[
\begin{aligned}
\|\mathsf T_{j,R}(\sigma)\|_{L^p_\omega}
&\le \|\mathsf T_j^\sigma\|\,\sigma(R)^{1/p},\\
\|\mathsf T_{j,R}^*(\omega)\|_{L^{p'}_\sigma}
&\le \|\mathsf T_j^\sigma\|\,\omega(R)^{1/p'}.
\end{aligned}
\]
Taking the supremum over \(R\) proves necessity.

For sufficiency, write \(\Gamma_j=\cbrace{(Q,P_Q):Q\in\mathcal S_j}\). By positivity and monotone convergence, we may assume without loss of generality that $\mc{S}_j$ is finite. By duality it suffices to show that for all nonnegative \(f\in L^p_\sigma\) and \(g\in L^{p'}_\omega\)
\begin{equation}\label{eq:formtarget}
\sum_{Q \in\mc{S}_j}\frac{c_{Q,P_Q}}{\mu(P_Q)}\avg{\sigma}{Q}\,\avgsig{f}{Q}\,\langle g\rangle_{P_Q}^{\omega}\omega(P_Q)
\lesssim_p \bigl(\mathfrak T_{p,j}+\mathfrak T_{p,j}^*\bigr)\|f\|_{L^p_\sigma}\|g\|_{L^{p'}_\omega}.
\end{equation}
Construct the collection \(\mathcal F\) as follows. Start with the maximal members of \(\mc{S}_j\). Having chosen \(F\in\mathcal F\), add to \(\mathcal F\) all maximal cubes \(Q\in\mc{S}_j\) such that $Q\subsetneq F$ and
\[
\avgsig{f}{Q}>2\avgsig{f}{F},
\]
and iterate this procedure. Construct \(\mathcal G\) analogously from \(\{P_Q:Q\in\mc{S}_j\}\) using the \(\omega\)-averages of \(g\). {By construction, for every \(Q\in\mc{S}_j\), we have}
\begin{equation}
\label{eq:principal-control}
\avgsig{f}{Q}\le 2\avgsig{f}{\pi_{\mathcal F}(Q)},
\qquad
\langle g\rangle_{P_Q}^{\omega}\le 2\langle g\rangle_{\pi_{\mathcal G}(P_Q)}^{\omega},
\end{equation}
{Applying Lemma~\ref{lemma: carleson embedding} to the dyadic filtration, with exponents \(p\) and \(p'\), gives}
\begin{equation}
\label{eq:principal-carleson-embeddings}
\begin{aligned}
\sum_{F\in\mathcal F}(\avgsig{f}{F})^p\sigma(F)
&\lesssim_p \|f\|_{L^p_\sigma}^p,\\
\sum_{G\in\mathcal G}(\langle g\rangle_G^\omega)^{p'}\omega(G)
&\lesssim_p \|g\|_{L^{p'}_\omega}^{p'}.
\end{aligned}
\end{equation}

By construction, \(\mathcal F\subseteq\mc{S}_j\) and \(\mathcal G\subseteq\cbrace{P_Q:Q\in\mc{S}_j}\). Applying Lemma~\ref{lem:stopping-geometry} to \(\Gamma_j\), we obtain a partition
\(
\mc{S}_j=\mathcal Q_{\mathcal F}\mathbin{\dot\cup}\mathcal Q_{\mathcal G}.
\)

\proofpart{The \(\mathcal Q_{\mathcal F}\)-contribution}
For \(F\in\mathcal F\), set
\[
g_F:=\sum_{G\in\mathcal G_F}\langle g\rangle_G^\omega\mathbf 1_G,\qquad
\mathcal G_F:=\{\pi_{\mathcal G}(P_Q):Q\in\mathcal Q_{\mathcal F},\ \pi_{\mathcal F}(Q)=F\}.
\]
For fixed \(F\), the cubes in \(\mathcal G_F\) containing a given point $x \in \R^d$ form a nested chain. If
\(G'\subsetneq G\) are consecutive cubes in this chain, then
\(\langle g\rangle_{G'}^\omega>2\langle g\rangle_G^\omega\) by construction. Hence, by a geometric series argument
\[
g_F=\sum_{G\in\mathcal G_F}\langle g\rangle_G^\omega \mathbf 1_G\le2\sup_{\substack{G\in\mathcal G_F}}\langle g\rangle_G^\omega\mathbf 1_G,
\]
and therefore
\[
\|g_F\|_{L^{p'}_\omega}^{p'}
\lesssim_p
\sum_{G\in\mathcal G_F}
(\langle g\rangle_G^\omega)^{p'}\omega(G).
\]
By Lemma~\ref{lem:stopping-geometry}, each \(G\in\mathcal G\) belongs to \(\mathcal G_F\) for at most two values of \(F\). Summing over \(F\) and using \eqref{eq:principal-carleson-embeddings}, we obtain
\[
\sum_{F\in\mathcal F}\|g_F\|_{L^{p'}_\omega}^{p'}
\lesssim_p \|g\|_{L^{p'}_\omega}^{p'}.
\]
For \(Q\in\mathcal Q_{\mathcal F}\) with \(\pi_{\mathcal F}(Q)=F\), we have \(Q\subseteq F\), while
\(g_F\ge\langle g\rangle_{\pi_{\mathcal G}(P_Q)}^\omega\) on \(P_Q\). Therefore, using \eqref{eq:principal-control} and \eqref{eq:principal-carleson-embeddings}, we obtain
\[
\begin{aligned}
\sum_{Q\in\mathcal Q_{\mathcal F}}
\frac{c_{Q,P_Q}}{\mu(P_Q)}
\avg{\sigma}{Q}\,\avgsig{f}{Q}\,
\langle g\rangle_{P_Q}^{\omega}\omega(P_Q)
&\lesssim
\sum_{F\in\mathcal F}\avgsig{f}{F}
\sum_{\substack{Q\in\mathcal Q_{\mathcal F}\\\pi_{\mathcal F}(Q)=F}}
\frac{c_{Q,P_Q}}{\mu(P_Q)}
\avg{\sigma}{Q}
\int_{P_Q}g_F\,\mathrm d\mu_\omega\\
&\le
\sum_{F\in\mathcal F}
\avgsig{f}{F}
\int_{\R^d}\mathsf T_{j,F}(\sigma)g_F\,\mathrm d\mu_\omega\\
&\le
\mathfrak T_{p,j}
\sum_{F\in\mathcal F}
\avgsig{f}{F}\sigma(F)^{1/p}
\|g_F\|_{L^{p'}_\omega}\\
&\lesssim_{p}
\mathfrak T_{p,j}
\|f\|_{L^p_\sigma}
\|g\|_{L^{p'}_\omega}.
\end{aligned}
\]

\proofpart{The \(\mathcal Q_{\mathcal G}\)-contribution}
The reversed relation \(\cbrace{(P_Q,Q):Q\in\mc{S}_j}\) is again a branch. We apply the preceding argument to the pairs indexed by \(\mathcal Q_{\mathcal G}\), using the second bound in Lemma~\ref{lem:stopping-geometry} and interchanging
\[
(f,\sigma,p,\mathcal F)
\quad\text{and}\quad
(g,\omega,p',\mathcal G).
\]
The direct testing constant becomes \(\mathfrak T_{p,j}^*\). Hence
\[
\sum_{Q\in\mathcal Q_{\mathcal G}}\frac{c_{Q,P_Q}}{\mu(P_Q)}\avg{\sigma}{Q}\,\avgsig{f}{Q}\,\langle g\rangle_{P_Q}^{\omega}\omega(P_Q)
\lesssim_p \mathfrak T_{p,j}^*\|f\|_{L^p_\sigma}\|g\|_{L^{p'}_\omega},
\]
proving \eqref{eq:formtarget} and thus finishing the proof.
\end{proof}

\section{Mixed estimates on branches}
\label{sec:finite-complexity-characteristics}
In this section we prove Theorem~\ref{thm: finite complexity mixed intro}. We will first use the testing characterization from Section~\ref{sec:finite-complexity} to obtain mixed bounds on a fixed branch, and then sum these bounds over the branch decomposition obtained in Proposition \ref{prop:finite-complexity-branch-decomposition}.

Throughout this section, fix \(1<p<\infty\) and let \(\sigma,\omega\) be weights. Let \(\mathcal S\subseteq\mathscr D\) be \(\eta\)-sparse with respect to $\mu$, and fix a branch
\[
\Gamma_j=\{(Q,P_Q):Q\in\mathcal S_j\}
\subseteq\mathcal S\times\mathcal S.
\]
\begin{proposition}
\label{prop:branch-mixed-estimate}
The branch operator \(\mathsf T_j^\sigma\) as in \eqref{eq:branchoperator} satisfies
\[
\|\mathsf T_j^\sigma\|_{L^p_\sigma\to L^p_\omega}
\lesssim_{p} \eta^{-1}
[\omega,\sigma]_{\Acal_{p,\Gamma_j}(\mu)}^{1/p}
\Bigl(
[\sigma]_{\AinfD}^{1/p}
+[\omega]_{\AinfD}^{1/p'}
\Bigr).
\]
\end{proposition}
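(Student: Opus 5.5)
By Proposition~\ref{prop:finite-complexity-testing}, it suffices to bound the two testing constants:
\[
\mathfrak T_{p,j}\lesssim_p \eta^{-1}[\omega,\sigma]_{\Acal_{p,\Gamma_j}(\mu)}^{1/p}[\sigma]_{\AinfD}^{1/p},
\qquad
\mathfrak T_{p,j}^*\lesssim_p \eta^{-1}[\omega,\sigma]_{\Acal_{p,\Gamma_j}(\mu)}^{1/p}[\omega]_{\AinfD}^{1/p'}.
\]
The two estimates are symmetric under reversing the branch (interchanging $(\sigma,p,Q)$ with $(\omega,p',P_Q)$). It therefore suffices to treat $\mathfrak T_{p,j}$ and then indicate the changes for the dual constant.

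\textbf{Step 1: dualize.} Fix $R\in\mathscr D$ and a nonnegative $g$ with $\|g\|_{L^{p'}_\omega}=1$. Then
\[
\int \mathsf T_{j,R}(\sigma)\,g\,\mathrm d\mu_\omega=\sum_{Q\in\mathcal S_j,\,Q\subseteq R} \frac{c_{Q,P_Q}}{\mu(P_Q)}\avg{\sigma}{Q}\,\omega(P_Q)\,\langle g\rangle^\omega_{P_Q}.
\]

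\textbf{Step 2: split each term with the characteristic.} Write $a_Q:=\frac{c_{Q,P_Q}}{\mu(P_Q)}\avg{\sigma}{Q}\avg{\omega}{P_Q}\mu(P_Q)$. The definition of $[\omega,\sigma]_{\Acal_{p,\Gamma}(\mu)}$ gives
\[
\frac{c_{Q,P_Q}}{\mu(P_Q)}\avg{\omega}{P_Q}\le [\cdot]\,\frac{\mu(Q)}{c_{Q,P_Q}^{p-1}}\bigl(\avg{\sigma}{Q}\bigr)^{1-p}\mu(P_Q)^{p-2},
\]
which is awkward in this form. The cleaner route is the standard one: put $1=\frac1p+\frac1{p'}$ and apply Hölder to the sum $\sum_Q a_Q\langle g\rangle^\omega_{P_Q}$. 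The factor $\sum_Q a_Q(\langle g\rangle^\omega_{P_Q})^{p'}\cdots$ is then bounded using sparseness of $P_Q$, which is available since $P_Q\in\mathcal S$ and the map $Q\mapsto P_Q$ is injective, so $\{P_Q\}$ is $\eta$-sparse. More concretely, write
\[
a_Q\langle g\rangle^\omega_{P_Q}=\Bigl(\tfrac{c_{Q,P_Q}^p}{\mu(Q)\mu(P_Q)^{p-1}}\avg{\omega}{P_Q}(\avg{\sigma}{Q})^{p-1}\Bigr)^{1/p}\,\sigma(Q)^{1/p}\,\bigl(\avg{\omega}{P_Q}\bigr)^{1/p'}\mu(P_Q)^{1/p'}\langle g\rangle^\omega_{P_Q}.
\]
I will check that the powers of $\mu(Q)$, $\mu(P_Q)$ and $c_{Q,P_Q}$ match exactly. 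Since $\mu(Q)\,\mu(P_Q)^{p-1}\ge c^p$ would only lose in the favourable direction, an inequality suffices here anyway. This bounds the sum by
\[
[\cdot]^{1/p}\Bigl(\sum_{Q\subseteq R}\sigma(Q)\Bigr)^{1/p}\Bigl(\sum_Q \omega(P_Q)(\langle g\rangle^\omega_{P_Q})^{p'}\Bigr)^{1/p'}.
\]

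\textbf{Step 3: control the two sums.} For the first sum, sparseness gives $\sigma(Q)\le \eta^{-1}\int_{E_Q}\avg{\sigma}{Q}\,\mathrm d\mu\le\eta^{-1}\int_{E_Q}\Mdyad(\sigma\mathbf 1_R)\,\mathrm d\mu$. Since the $E_Q$ are disjoint, summing yields $\sum_{Q\subseteq R}\sigma(Q)\le\eta^{-1}[\sigma]_{\AinfD}\,\sigma(R)$. For the second sum, bound $\omega(P_Q)(\langle g\rangle^\omega_{P_Q})^{p'}\le \eta^{-1}\int_{E_{P_Q}}\avg{\omega}{P_Q}\,M^\omega_{\mathscr D}(g)^{p'}\,\mathrm d\mu$. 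Disjointness of the $E_{P_Q}$ (injectivity) then leaves the task of comparing $\avg{\omega}{P}$ with $\omega$ on $E_P$, and this is the \emph{main obstacle}. Plain sparseness does not allow that comparison without an $A_\infty$ loss, so the naive Hölder step would cost $[\omega]_{A_\infty}^{1/p'}$ even in the direct estimate. I will therefore avoid it as follows. Run the principal-cube construction for $g$ on the family $\{P_Q\}$, as in the proof of Proposition~\ref{prop:finite-complexity-testing}, and group the $Q$ by $\pi_{\mathcal G}(P_Q)$. Each group is then handled by the linear $A_\infty$ packing, in the form
\[
\sum_{Q:\,\pi(P_Q)=G}\frac{c_{Q,P_Q}}{\mu(P_Q)}\avg{\sigma}{Q}\,\omega(P_Q)\le \|\mathbf 1_G\mathsf T_{j,G'}(\sigma)\|_{L^1_\omega}.
\]
The groups are finally summed with the Carleson embedding \eqref{eq:principal-carleson-embeddings}, in analogy with the $r<p$ computation in Section~\ref{sec:testing-to-characteristics}, where the sparse power packing of Lemma~\ref{lemma: two weight sparse power packing} replaces the conditional principal lemma. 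A dyadic version of that lemma on the sparse family is immediate. If the grouping argument fails, I will fall back on mimicking the direct testing estimate of Section~\ref{sec:testing-to-characteristics} verbatim. That means applying the dyadic principal lemma to $\|\mathsf T_{j,R}(\sigma)\|_{L^p_\omega}^p$ with coefficients indexed by $P_Q$. The branch nesting \eqref{eq:branch-nesting} ensures that tails $\sum_{Q'\subseteq Q}$ localize correctly, and the $\Acal_{p,\Gamma}$ condition is used in the interpolated form with exponent $\lambda$ as there. In either route, the factor $\eta^{-1}$ arises only from the $A_\infty$ packing step.
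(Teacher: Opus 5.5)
Your reduction to the two testing constants is fine. So is the H\"older splitting in Step~2: the powers of $\mu(Q)$, $\mu(P_Q)$ and $c_{Q,P_Q}$ do match exactly. The bound $\sum_{Q\in\mathcal S_j,\,Q\subseteq R}\sigma(Q)\le\eta^{-1}[\sigma]_{\AinfD}\sigma(R)$ is also correct, and it is the paper's final step too. But, as you note, Step~2 gives only the product $[\sigma]_{\AinfD}^{1/p}[\omega]_{\AinfD}^{1/p'}$ in $\mathfrak T_{p,j}$. The whole content of the proposition is the estimate
\[
\|\mathsf T_{j,R}(\sigma)\|_{L^p_\omega}^p
\lesssim_p
\eta^{-(p-1)}[\omega,\sigma]_{\Acal_{p,\Gamma_j}(\mu)}
\sum_{Q\in\mathcal S_j,\,Q\subseteq R}\sigma(Q),
\]
which has no $A_\infty$ characteristic of $\omega$, and your proposal does not prove it. Write $\rho_Q:=\max\{\mu(Q),\mu(P_Q)\}$, $\widetilde\sigma_Q:=\sigma(Q)/\rho_Q$ and $\widetilde\omega_Q:=\omega(P_Q)/\rho_Q$. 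Then $\mathsf T_{j,R}(\sigma)=\sum_{Q\subseteq R}\widetilde\sigma_Q\mathbf 1_{P_Q}$ and $\widetilde\sigma_Q^{p-1}\widetilde\omega_Q\le[\omega,\sigma]_{\Acal_{p,\Gamma_j}(\mu)}$.

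Grouping by principal cubes of $g$ does not remove the obstacle. It bounds the pairing by $2\sum_G\langle g\rangle^\omega_G X_G$ with $X_G:=\sum_{\pi_{\mathcal G}(P_Q)=G}\widetilde\sigma_Q\,\omega(P_Q)$. After the Carleson embedding for $\mathcal G$, you still need to control $\sum_G\omega(G)^{1-p}X_G^p$, which is the original $L^p_\omega$ problem; an $L^1_\omega$ bound per group does not help there. If you instead split each $X_G$ with the characteristic as in Step~2, you need $\sum_{\pi_{\mathcal G}(P_Q)=G}\omega(P_Q)\lesssim\omega(G)$, which is exactly the $[\omega]_{\AinfD}$ loss again.

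Your fallback, transferring the direct testing argument of Section~\ref{sec:testing-to-characteristics} with $r=1$, only partly survives on a branch. For $p>2$ one has $\lambda=\frac1{p-1}$, and the argument becomes the paper's: the principal lemma on $(\mathbb R^d,\mu_\omega)$ along the generations of $P_Q$, followed by the weighted branch packing $\sum_{P_K\subseteq P_Q}\widetilde\omega_K^{\gamma}\rho_K\le\frac{1+2\eta^{-1}}{1-\gamma}\widetilde\omega_Q^{\gamma}\rho_Q$ with $\gamma=\frac{p-2}{p-1}$. This packing is not an immediate dyadic copy of Lemma~\ref{lemma: two weight sparse power packing}. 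Since $\rho_K$ may equal $\mu(K)$ with $K\ne P_K$, there is no geometric decay along the $P$-generations. The paper first uses the branch nesting \eqref{eq:branch-nesting}, which gives $K\subseteq P_L$ whenever $P_K\subsetneq P_L$, to obtain $\sum_{P_K\subseteq P_L}\rho_K\le(1+2\eta^{-1})\rho_L$. It then runs a layer-cake argument over the maximal $P_U$ with $\widetilde\omega_U>t$.

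For $1<p<2$, however, $\lambda=1$, and your transferred argument needs the downward packing $\sum_{P_K\subseteq P_Q}\widetilde\sigma_K^{2-p}\rho_K\lesssim\widetilde\sigma_Q^{2-p}\rho_Q$. This is false on a branch. The cubes $K$ in that sum lie in $P_Q$, while $\widetilde\sigma_Q$ is computed on $Q$, which can be disjoint from $P_Q$. For a counterexample, take Lebesgue measure on $\mathbb R$, the pairs $([0,1),[1,2))$ and $([1,\tfrac54),[\tfrac54,\tfrac32))$, and $\sigma=\epsilon$ on $[0,1)$, $\sigma=1$ elsewhere, with $\epsilon\to0$.

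The missing idea for $1<p\le2$ is to expand upward instead of downward:
\begin{enumerate}
\item Telescoping along chains gives $\|\mathsf T_{j,R}(\sigma)\|_{L^p_\omega}^p\le p\sum_Q\widetilde\sigma_Q\bigl(\sum_{P_L\supseteq P_Q}\widetilde\sigma_L\bigr)^{p-1}\omega(P_Q)$.
\item Then use $\widetilde\sigma_Q\,\omega(P_Q)\le[\omega,\sigma]_{\Acal_{p,\Gamma_j}(\mu)}\widetilde\sigma_Q^{2-p}\rho_Q$.
\item Apply H\"older with exponents $\frac1{2-p}$ and $\frac1{p-1}$.
\item Interchange the summation, so that only the unweighted packing $\sum_{P_Q\subseteq P_L}\rho_Q\lesssim\eta^{-1}\rho_L$ is needed.
\end{enumerate}
The dual constant then follows by reversing the branch, as you say.
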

The characteristic \([\omega,\sigma]_{\Acal_{p,\Gamma_j}(\mu)}\) also gives a necessary condition for boundedness:
\[
[\omega,\sigma]_{\Acal_{p,\Gamma_j}(\mu)}^{1/p}
\le
\|\mathsf T_j^\sigma\|_{L^p_\sigma\to L^p_\omega}.
\]
This follows by testing \(\mathsf T_j^\sigma\) on
\(\mathbf1_Q\), retaining the term indexed by \(Q\),
and taking the supremum over \(Q\in\mathcal S_j\).
Proposition \ref{prop:branch-mixed-estimate} will follow from 
Propositions~\ref{prop:finite-complexity-testing} and \ref{prop:forward-testing} below.
The implicit constant in Proposition \ref{prop:branch-mixed-estimate} is independent of the dimension, and \(\Gamma_j\) need not have finite complexity.
In the proof of Theorem~\ref{thm: finite complexity mixed intro}, we will use finite complexity only to decompose the original relation into finitely many branches.
By Proposition~\ref{prop:finite-complexity-testing}, it suffices to prove
\[
\mathfrak T_{p,j}
\lesssim_{p,\eta}
[\omega,\sigma]_{\Acal_{p,\Gamma_j}(\mu)}^{1/p}
[\sigma]_{\AinfD}^{1/p}
\qquad\text{and}\qquad
\mathfrak T_{p,j}^*
\lesssim_{p,\eta}
[\omega,\sigma]_{\Acal_{p,\Gamma_j}(\mu)}^{1/p}
[\omega]_{\AinfD}^{1/p'}.
\]

\subsection{Coefficient and packing estimates}
We first collect some coefficient and packing bounds used for the testing estimates. The dyadic \(A_\infty\)-characteristic controls weighted sparse sums, while the nesting condition in the definition of a branch supplies the additional packing needed when the input and output cubes differ.




To express the direct and dual testing coefficients with a common denominator, for \(Q\in\mathcal S_j\) set\[
\rho_Q:=\frac{\mu(Q)\mu(P_Q)}{c_{Q,P_Q}}
=\max\{\mu(Q),\mu(P_Q)\},
\qquad
\widetilde{\sigma}_Q:=\frac{\sigma(Q)}{\rho_Q},
\qquad
\widetilde{\omega}_Q:=\frac{\omega(P_Q)}{\rho_Q}.
\]
Equivalently,
\[
\widetilde{\sigma}_Q
=\frac{c_{Q,P_Q}}{\mu(P_Q)}\avg{\sigma}{Q},
\qquad
\widetilde{\omega}_Q
=\frac{c_{Q,P_Q}}{\mu(Q)}\avg{\omega}{P_Q}.
\]
Thus \(\widetilde{\sigma}_Q\) and \(\widetilde \omega_Q\) are precisely the
coefficients in \(\mathsf T_{j,R}(\sigma)\) and
\(\mathsf T_{j,R}^*(\omega)\), respectively.
\begin{lemma}
\label{lem:coefficient-estimates}
For every \(Q\in\mathcal S_j\),
\[
(\widetilde{\sigma}_Q)^{p-1}\widetilde{\omega}_Q
\le [\omega,\sigma]_{\Acal_{p,\Gamma_j}(\mu)}\qquad\text{and}\qquad
(\widetilde{\omega}_Q)^{p'-1}\widetilde{\sigma}_Q
\le [\omega,\sigma]_{\Acal_{p,\Gamma_j}(\mu)}^{p'/p}.
\]
\end{lemma}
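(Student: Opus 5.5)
This is a direct computation from the definitions. We write both products in terms of $\mu$-averages and compare with the supremum defining $[\omega,\sigma]_{\Acal_{p,\Gamma_j}(\mu)}$, evaluated at the pair $(Q,P_Q)\in\Gamma_j$.

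For the first inequality we use $\widetilde{\sigma}_Q=\frac{c_{Q,P_Q}}{\mu(P_Q)}\avg{\sigma}{Q}$ and $\widetilde{\omega}_Q=\frac{c_{Q,P_Q}}{\mu(Q)}\avg{\omega}{P_Q}$, which gives
\[
(\widetilde{\sigma}_Q)^{p-1}\widetilde{\omega}_Q
=\frac{c_{Q,P_Q}^{p}}{\mu(Q)\mu(P_Q)^{p-1}}\avg{\omega}{P_Q}\bigl(\avg{\sigma}{Q}\bigr)^{p-1}.
\]
This is exactly the quantity inside the supremum defining $[\omega,\sigma]_{\Acal_{p,\Gamma_j}(\mu)}$ at the pair $(Q,P_Q)$. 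Since this pair belongs to $\Gamma_j$, the first bound follows. Cubes of zero $\mu$-measure are omitted by convention, so no division issues arise.

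For the second inequality we avoid recomputing and instead use exponent bookkeeping. Since $p'-1=\frac{1}{p-1}$, we have
\[
(\widetilde{\omega}_Q)^{p'-1}\widetilde{\sigma}_Q
=\Bigl(\widetilde{\omega}_Q(\widetilde{\sigma}_Q)^{p-1}\Bigr)^{1/(p-1)}
\le [\omega,\sigma]_{\Acal_{p,\Gamma_j}(\mu)}^{1/(p-1)}.
\]
It remains to note that $\frac{1}{p-1}=\frac{p'}{p}$, because $p'=\frac{p}{p-1}$. This gives the second bound.

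There is no real obstacle here. The only point needing care is checking that the powers of $c_{Q,P_Q}$, $\mu(Q)$ and $\mu(P_Q)$ match the normalization in the definition of $\Acal_{p,\Gamma}$. The order of the pair also matters: $\sigma$ is averaged over the first coordinate $Q$ and $\omega$ over the second coordinate $P_Q$, which is exactly how $\widetilde{\sigma}_Q$ and $\widetilde{\omega}_Q$ are defined.
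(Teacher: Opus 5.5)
Your proposal is correct and follows the paper's proof: you identify $(\widetilde{\sigma}_Q)^{p-1}\widetilde{\omega}_Q$ with the term of $[\omega,\sigma]_{\Acal_{p,\Gamma_j}(\mu)}$ at the pair $(Q,P_Q)\in\Gamma_j$. You then obtain the second bound by raising the first to the power $1/(p-1)=p'/p$, exactly as the paper does.
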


\begin{proof}
By the definition of the weight characteristic and the normalized coefficients,
\[
\frac{c_{Q,P_Q}^p \omega(P_Q)\sigma(Q)^{p-1}}
{\mu(Q)^p\mu(P_Q)^p}
=
(\widetilde{\sigma}_Q)^{p-1}\widetilde{\omega}_Q
\le [\omega,\sigma]_{\Acal_{p,\Gamma_j}(\mu)}.
\]
This gives the first inequality.
Since \(p'-1=1/(p-1)\), raising this inequality to the power
\(1/(p-1)=p'/p\) gives the second inequality.
\end{proof}

{We next use sparsity and the branch nesting condition
to prove a weighted packing estimate.}
\begin{lemma}[Weighted branch packing]
\label{lem:branch-packing}
Let \(0\le\gamma<1\). For every \(K\in\mathcal S_j\),
\begin{equation*}
\sum_{\substack{Q\in\mathcal S_j\\P_Q\subseteq P_K}}
(\widetilde{\omega}_Q)^\gamma\rho_Q
\le
\frac{1+2\eta^{-1}}{1-\gamma}
(\widetilde{\omega}_K)^\gamma\rho_K.
\end{equation*}
\end{lemma}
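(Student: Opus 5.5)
We first prove the case \(\gamma=0\) as an unweighted packing estimate, and then obtain the general case by a layer-cake argument over the level sets of \(\widetilde\omega_Q\). The identity \(\widetilde\omega_Q\rho_Q=\omega(P_Q)\) and the disjointness of maximal cubes supply the decay.

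\emph{Step 1: the case \(\gamma=0\).} Fix any \(K'\in\mathcal S_j\). We claim
\[
\sum_{\substack{Q\in\mathcal S_j\\P_Q\subseteq P_{K'}}}\rho_Q\le(1+2\eta^{-1})\rho_{K'}.
\]
Since \(\rho_Q\le\mu(Q)+\mu(P_Q)\), we treat the two pieces separately.

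\begin{itemize}
\item Take \(Q\ne K'\) with \(P_Q\subseteq P_{K'}\). Injectivity of \(Q\mapsto P_Q\) gives \(P_Q\subsetneq P_{K'}\). The second implication in \eqref{eq:branch-nesting} then gives \(Q\subseteq H_Q\subseteq P_{K'}\).
\item Since \(\Gamma_j\subseteq\mathcal S\times\mathcal S\), the cubes \(Q\) are distinct members of \(\mathcal S\), and by injectivity so are the cubes \(P_Q\). All of them lie in \(P_{K'}\).
\item The \(\eta\)-sparseness of \(\mathcal S\) (disjoint sets \(E_R\subseteq R\) with \(\mu(E_R)\ge\eta\mu(R)\)) gives \(\sum_{R\in\mathcal S,\,R\subseteq P_{K'}}\mu(R)\le\eta^{-1}\mu(P_{K'})\).
\item Hence the \(\mu(P_Q)\) sum over \(Q\ne K'\) is at most \(\eta^{-1}\mu(P_{K'})\), and so is the \(\mu(Q)\) sum.
\end{itemize}
Adding the term \(Q=K'\), which is \(\rho_{K'}\), and using \(\mu(P_{K'})\le\rho_{K'}\), gives the claim.

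\emph{Step 2: level sets.} For \(\lambda>0\) set \(\mathcal Q_\lambda:=\{Q\in\mathcal S_j: P_Q\subseteq P_K,\ \widetilde\omega_Q>\lambda\}\). By Step 1 with \(K'=K\), we have \(\sum_{\mathcal Q_\lambda}\rho_Q\le(1+2\eta^{-1})\rho_K\) for all \(\lambda\).

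For large \(\lambda\) we improve this. Let \(\mathcal M_\lambda\) be the set of \(Q'\in\mathcal Q_\lambda\) for which \(P_{Q'}\) is maximal among \(\{P_Q:Q\in\mathcal Q_\lambda\}\). We may reduce to finitely many cubes by monotone convergence, so maximal elements exist.

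\begin{itemize}
\item The cubes \(P_{Q'}\), \(Q'\in\mathcal M_\lambda\), are dyadic and distinct, hence pairwise disjoint subsets of \(P_K\).
\item For each \(Q'\in\mathcal M_\lambda\), the relation \(\widetilde\omega_{Q'}>\lambda\) means \(\rho_{Q'}<\omega(P_{Q'})/\lambda\).
\item Every \(Q\in\mathcal Q_\lambda\) has \(P_Q\) inside some maximal \(P_{Q'}\). Applying Step 1 with \(K'=Q'\) and summing over \(\mathcal M_\lambda\) gives
\[
\sum_{Q\in\mathcal Q_\lambda}\rho_Q\le(1+2\eta^{-1})\sum_{Q'\in\mathcal M_\lambda}\rho_{Q'}\le(1+2\eta^{-1})\frac{\omega(P_K)}{\lambda}=(1+2\eta^{-1})\frac{\widetilde\omega_K\rho_K}{\lambda}.
\]
\end{itemize}

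\emph{Step 3: layer cake.} For \(0<\gamma<1\), write \(x^\gamma=\gamma\int_0^x\lambda^{\gamma-1}\,\mathrm d\lambda\) and apply Tonelli:
\[
\sum_{P_Q\subseteq P_K}(\widetilde\omega_Q)^\gamma\rho_Q=\gamma\int_0^\infty\lambda^{\gamma-1}\sum_{Q\in\mathcal Q_\lambda}\rho_Q\,\mathrm d\lambda .
\]
On \(\lambda<\widetilde\omega_K\) we use the bound \((1+2\eta^{-1})\rho_K\) from Step 2, which contributes \((1+2\eta^{-1})(\widetilde\omega_K)^\gamma\rho_K\). On \(\lambda\ge\widetilde\omega_K\) we use the improved bound, which contributes
\[
(1+2\eta^{-1})\,\gamma\,\widetilde\omega_K\rho_K\int_{\widetilde\omega_K}^\infty\lambda^{\gamma-2}\,\mathrm d\lambda=(1+2\eta^{-1})\frac{\gamma}{1-\gamma}(\widetilde\omega_K)^\gamma\rho_K .
\]
Summing the two contributions gives exactly \(\frac{1+2\eta^{-1}}{1-\gamma}(\widetilde\omega_K)^\gamma\rho_K\), which is the claimed bound.

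The main obstacle is Step 1: the \(\mu(Q)\)-part of \(\rho_Q\) can only be packed into \(P_K\) because the branch nesting condition \eqref{eq:branch-nesting} forces \(Q\subseteq P_K\), and injectivity is needed to apply sparseness to distinct cubes. Once Step 1 holds for every \(K'\in\mathcal S_j\), Steps 2 and 3 follow directly.
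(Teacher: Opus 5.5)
Your proposal is correct and follows the paper's proof essentially step for step: the $\gamma=0$ packing via injectivity, the second branch-nesting implication and $\eta$-sparseness, then the maximal-cube level-set bound $\min\{\rho_K,\omega(P_K)/t\}$ and the layer-cake integral split at $\widetilde{\omega}_K$, yielding the same constant $\frac{1+2\eta^{-1}}{1-\gamma}$. The differences are cosmetic. You pack $\mu(Q)$ and $\mu(P_Q)$ directly inside $P_{K'}$ rather than through the paper's intermediate bound $\sum_{H_Q\subseteq R}\rho_Q\le 2\eta^{-1}\mu(R)$. Your reduction to finitely many cubes is harmless but unnecessary, since all the $P_Q$ lie in $P_K$ and maximal ones exist anyway.
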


\begin{proof}

For all \(R\in\mathscr D\),
\[
\sum_{\substack{Q\in\mathcal S_j\\H_Q\subseteq R}}\rho_Q
\le 2\eta^{-1}\mu(R).
\]
Indeed, \(H_Q\subseteq R\) implies that \(Q,P_Q\subseteq R\). Since \(Q\mapsto P_Q\) is injective and \(\rho_Q\le\mu(Q)+\mu(P_Q)\), the estimate follows from \(\eta\)-sparsity.

Fix \(L {\in\mathcal S_j}\). If \(Q\ne L\) and \(P_Q\subseteq P_L\), injectivity gives \(P_Q\subsetneq P_L\), so \eqref{eq:branch-nesting} gives \(H_Q\subseteq P_L\). Hence
\begin{equation*}
\sum_{P_Q\subseteq P_L}\rho_Q
\le
\rho_L+\sum_{H_Q\subseteq P_L}\rho_Q
\le
\rho_L+2\eta^{-1}\mu(P_L)
\le (1+2\eta^{-1})\rho_L.
\end{equation*}
This proves the result when \(\gamma=0\).

Suppose that \(0<\gamma<1\), and fix \(t>0\). Among the cubes \(P_Q\subseteq P_K\) satisfying \(\widetilde{\omega}_Q>t\), denote the maximal ones by \(P_U\). They are pairwise disjoint, and the estimate for \(\gamma=0\), with \(L=U\), gives
\[
\sum_{\substack{P_Q\subseteq P_K\\
\widetilde{\omega}_Q>t}}\rho_Q
\le
\sum_U\sum_{P_Q\subseteq P_U}\rho_Q
\le (1+2\eta^{-1})\sum_U\rho_U
\le \frac{1+2\eta^{-1}}{t}\sum_U\omega(P_U)
\le \frac{(1+2\eta^{-1})\omega(P_K)}{t}.
\]
The same estimate with \(L=K\) gives the other bound, and hence
\[
\sum_{\substack{P_Q\subseteq P_K\\
\widetilde{\omega}_Q>t}}\rho_Q
\le
(1+2\eta^{-1})\min\Bigl\{\rho_K,\frac{\omega(P_K)}{t}\Bigr\}.
\]

Using \(\omega(P_K)=\widetilde{\omega}_K\rho_K\), integrating this estimate in \(t\), and splitting the integral at \(\widetilde{\omega}_K\), we obtain

\[
\begin{aligned}
\sum_{P_Q\subseteq P_K}
(\widetilde{\omega}_Q)^\gamma\rho_Q
&=
\gamma\int_0^\infty t^{\gamma-1}
\sum_{\substack{P_Q\subseteq P_K\\
\widetilde{\omega}_Q>t}}
\rho_Q\,\mathrm dt 
\\
&\leq
\gamma(1+2\eta^{-1})\Bigl(
\rho_K\int_0^{\widetilde{\omega}_K}t^{\gamma-1}\,\mathrm dt
+
\omega(P_K)
\int_{\widetilde{\omega}_K}^{\infty}t^{\gamma-2}\,\mathrm dt
\Bigr) 
\\
&=(1+2\eta^{-1})\Bigl(1+\frac{\gamma}{1-\gamma}\Bigr)
(\widetilde{\omega}_K)^\gamma\rho_K
\\
&=\frac{1+2\eta^{-1}}{1-\gamma}(\widetilde{\omega}_K)^\gamma\rho_K.
\end{aligned}
\]
This finishes the proof.
\end{proof}

We note that in what follows, the nesting condition in the definition of a branch will only be used through Lemma \ref{lem:branch-packing}.

\subsection{Direct and dual testing}
We will now estimate the direct testing condition using a chain expansion for \(1<p\le2\) and Lemma~\ref{lemma: principal lemma expectation version} for \(p>2\). The dual estimate then follows by reversing the branch and interchanging the weights.
\begin{proposition}
\label{prop:forward-testing}
For every \(R\in\mathscr D\),
\begin{align*}
\|\mathsf T_{j,R}(\sigma)\|_{L^p_\omega}
&\lesssim_{p} \eta^{-1}
[\omega,\sigma]_{\Acal_{p,\Gamma_j}(\mu)}^{1/p}
[\sigma]_{\AinfD}^{1/p}\,\sigma(R)^{1/p},\\
\|\mathsf T_{j,R}^*(\omega)
\|_{L^{p'}_\sigma}
&\lesssim_{p} \eta^{-1}
[\omega,\sigma]_{\Acal_{p,\Gamma_j}(\mu)}^{1/p}
[\omega]_{\AinfD}^{1/p'}\,\omega(R)^{1/p'}.
\end{align*}
\end{proposition}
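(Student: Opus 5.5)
The plan is to prove the direct estimate; the dual one then follows by applying the same argument to the reversed branch $\{(P_Q,Q):Q\in\mathcal S_j\}$ with $(\sigma,p)$ and $(\omega,p')$ interchanged. Lemma~\ref{lem:coefficient-estimates} and Lemma~\ref{lem:branch-packing} are symmetric under this swap: the packing lemma for the reversed branch localizes by $Q\subseteq K$ and is proved the same way from \eqref{eq:branch-nesting}. The only asymmetry is in the localization. The direct sum $\mathsf T_{j,R}(\sigma)$ sums over $Q\subseteq R$, whereas the dual sum sums over $P_Q\subseteq R$.

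Fix $R$ and expand the $p$-th power of $\mathsf T_{j,R}(\sigma)=\sum_{Q\subseteq R}\widetilde\sigma_Q\mathbf 1_{P_Q}$. The expansion tool depends on $p$.

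For $p>2$, I would use the discrete principal lemma (Lemma~\ref{lemma: principal lemma expectation version} on the dyadic filtration, or its direct dyadic analogue). This gives
\[
\|\mathsf T_{j,R}(\sigma)\|_{L^p_\omega}^p\lesssim_p\sum_{Q\subseteq R}\widetilde\sigma_Q\,\omega(P_Q)\Bigl(\frac{1}{\omega(P_Q)}\sum_{Q'\subseteq R,\,P_{Q'}\subseteq P_Q}\widetilde\sigma_{Q'}\,\omega(P_{Q'})\Bigr)^{p-1}.
\]
The ordering here is by the output cubes $P_Q$, which are nested whenever they intersect. For $1<p\le2$, I would instead use the chain expansion $(\sum a_Q\mathbf 1_{P_Q})^p\le p\sum_Q a_Q\mathbf 1_{P_Q}(\sum_{P_{Q'}\subseteq P_Q}a_{Q'}\mathbf 1_{P_{Q'}})^{p-1}$ and then apply Jensen/H\"older with exponent $p-1\le1$. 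This leads to the same expression with the inner average taken in $\omega$.

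Next, write $\widetilde\sigma_{Q'}\omega(P_{Q'})=\widetilde\sigma_{Q'}\widetilde\omega_{Q'}\rho_{Q'}$. Lemma~\ref{lem:coefficient-estimates} gives $\widetilde\sigma_{Q'}\widetilde\omega_{Q'}\le[\omega,\sigma]^{1/(p-1)\cdot\lambda}\widetilde\sigma_{Q'}^{\,1-\lambda}\widetilde\omega_{Q'}^{\,\ldots}$, interpolated exactly as in the continuous-time choice of $\lambda$. Concretely, I would bound $\widetilde\sigma_{Q'}\widetilde\omega_{Q'}\le [\omega,\sigma]_{\Acal}^{1/(p-1)}\widetilde\omega_{Q'}^{\,1-1/(p-1)}$ when $p\ge2$, and use the mirrored bound when $p<2$. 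The inner sum then has the form $\sum_{P_{Q'}\subseteq P_Q}\widetilde\omega_{Q'}^{\gamma}\rho_{Q'}$ with $\gamma<1$, and Lemma~\ref{lem:branch-packing} controls it by $\eta^{-1}\widetilde\omega_Q^\gamma\rho_Q$. Reinserting this, the outer sum becomes $[\omega,\sigma]_{\Acal}\sum_{Q\subseteq R}\widetilde\sigma_Q\rho_Q=[\omega,\sigma]_{\Acal}\sum_{Q\subseteq R,\,Q\in\mathcal S}\sigma(Q)$, after applying Lemma~\ref{lem:coefficient-estimates} once more to turn the leftover $\widetilde\sigma_Q^{p-1}\widetilde\omega_Q$ into the characteristic. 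By sparsity, $\sum_{Q\subseteq R}\sigma(Q)\le\eta^{-1}\int_R M_{\mathscr D}(\sigma\mathbf 1_R)\,d\mu\le\eta^{-1}[\sigma]_{\AinfD}\sigma(R)$. Taking $p$-th roots gives the claimed estimate, with the power $\eta^{-1}$ after bookkeeping.

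The main obstacle is the exponent bookkeeping. The powers of $\widetilde\sigma$ and $\widetilde\omega$ must combine, via Lemma~\ref{lem:coefficient-estimates}, into exactly one factor $[\omega,\sigma]_{\Acal}$ while keeping the packing exponent $\gamma$ strictly below $1$. For $p$ near $1$ or large $p$ this may require splitting the power $p-1$ between the two coefficient inequalities, as was done with $\lambda$ in Section~\ref{sec:testing-to-characteristics}. The second subtlety is that the localization $Q\subseteq R$ does not control $P_Q\subseteq R$. I would handle this by noting that only the single outer $Q$ appears in the final sparse sum, so no packing over $P$'s relative to $R$ is needed.
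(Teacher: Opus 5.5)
Your argument for $p\ge2$ matches the paper's, and so do the final sparse $A_\infty$ step and the reduction of the dual estimate to the reversed branch. (For $p>2$ you do not need Lemma~\ref{lem:coefficient-estimates} a second time: the inner average is $\lesssim\eta^{-1}[\omega,\sigma]_{\Acal_{p,\Gamma_j}(\mu)}^{1/(p-1)}(\widetilde\omega_Q)^{-1/(p-1)}$, and $\widetilde\sigma_Q(\widetilde\omega_Q)^{-1}\omega(P_Q)=\sigma(Q)$ exactly.)

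\textbf{The gap is the range $1<p<2$.} Your chain expansion runs from the smallest cube upward, so after Jensen you land in the same descendant-form expression as for $p>2$. You then plan to bound each term indexed by $Q$ by $[\omega,\sigma]_{\Acal_{p,\Gamma_j}(\mu)}\,\sigma(Q)$, and for $p<2$ this termwise bound is false on a branch.
\begin{itemize}
\item If you remove $\widetilde\sigma_{Q'}$ using Lemma~\ref{lem:coefficient-estimates}, you are left with $(\widetilde\omega_{Q'})^{(p-2)/(p-1)}$. Its exponent is negative, so Lemma~\ref{lem:branch-packing} does not apply.
\item The mirrored bound $\widetilde\sigma_{Q'}\widetilde\omega_{Q'}\le[\omega,\sigma](\widetilde\sigma_{Q'})^{2-p}$, or any $\lambda$-split, leaves a positive power of $\widetilde\sigma_{Q'}$. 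You would then need to pack $\sigma$-averages over cubes $Q'$ with $P_{Q'}\subseteq P_Q$. The branch nesting only places these $Q'$ inside $P_Q$, not inside $Q$, so their $\sigma$-mass is unrelated to $\sigma(Q)$.
\end{itemize}

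Here is a concrete example. Take Lebesgue measure and the branch $\{([0,1),[1,2)),([1,\tfrac32),[\tfrac32,2))\}$. All four intervals lie in a $\tfrac12$-sparse family, and the nesting condition holds because $H_{Q'}=[1,2)=P_Q$. Set $\sigma=\epsilon$ on $[0,1)$ and $\sigma=1$ on $[1,\tfrac32)$. Set $\omega=M$ on $[1,\tfrac32)$ and $\omega=1$ on $[\tfrac32,2)$, with $\epsilon^{p-1}(M+1)\le2$. Then $[\omega,\sigma]_{\Acal_{p,\Gamma_j}(\mu)}=1$. The term of your expansion indexed by $Q=[0,1)$ is at least $2^{1-p}\epsilon\bigl(\tfrac{M+1}{2}\bigr)^{2-p}$. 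Its ratio to $\sigma(Q)=\epsilon$ tends to infinity as $M\to\infty$. The $\lambda$-interpolation of Section~\ref{sec:testing-to-characteristics} does not carry over, because there all averages are taken over the same sets.

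\textbf{The missing idea is to telescope in the opposite direction,} from the largest cube down. The partial sums are then constant on $P_Q$ and run over ancestors:
\[
\|\mathsf T_{j,R}(\sigma)\|_{L^p_\omega}^p\le p\sum_Q\widetilde\sigma_Q\Bigl(\sum_{P_L\supseteq P_Q}\widetilde\sigma_L\Bigr)^{p-1}\omega(P_Q).
\]
Now apply Lemma~\ref{lem:coefficient-estimates} at the outer cube, $\widetilde\sigma_Q\,\omega(P_Q)\le[\omega,\sigma]_{\Acal_{p,\Gamma_j}(\mu)}(\widetilde\sigma_Q)^{2-p}\rho_Q$. Then use H\"older in $Q$ with exponents $\frac1{2-p}$ and $\frac1{p-1}$ and weights $\rho_Q$, and interchange the sums. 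After this you only need the $\gamma=0$ case of Lemma~\ref{lem:branch-packing}:
\[
\sum_L\widetilde\sigma_L\sum_{P_Q\subseteq P_L}\rho_Q\lesssim\eta^{-1}\sum_L\sigma(L).
\]
Pure $\rho$-packing holds on any branch, and all the $\sigma$-dependence now sits on the ancestor cube $L$. This is the paper's argument for $1<p\le2$.
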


\begin{proof}
By monotone convergence, it is enough to work with a fixed finite subcollection of \(\{Q\in\mathcal S_j:Q\subseteq R\}\). Unless stated otherwise, all sums below are over this subcollection.

Suppose first that \(1<p\le2\). Telescoping the \(p\)-th powers
of the partial sums along each chain of cubes \(P_Q\), ordered
from largest to smallest, and then integrating gives
\[
\begin{aligned}
\|\mathsf T_{j,R}(\sigma)\|_{L^p_\omega}^p
&\le
p\sum_Q
\widetilde{\sigma}_Q
\Bigl(
\sum_{P_L\supseteq P_Q}\widetilde{\sigma}_L
\Bigr)^{p-1}
\omega(P_Q)\\
&\lesssim_p
[\omega,\sigma]_{\Acal_{p,\Gamma_j}(\mu)}
\sum_Q
(\widetilde{\sigma}_Q)^{2-p}
\Bigl(
\sum_{P_L\supseteq P_Q}\widetilde{\sigma}_L
\Bigr)^{p-1}\rho_Q.
\end{aligned}
\]
Here we used
\(\omega(P_Q)=\widetilde{\omega}_Q\rho_Q\) and
Lemma~\ref{lem:coefficient-estimates}.
For \(1<p<2\), H\"older's inequality with exponents
\(1/(2-p)\) and \(1/(p-1)\) gives
\[
\sum_Q
(\widetilde{\sigma}_Q)^{2-p}
\Bigl(
\sum_{P_L\supseteq P_Q}\widetilde{\sigma}_L
\Bigr)^{p-1}\rho_Q
\le
\Bigl(\sum_Q\widetilde{\sigma}_Q\rho_Q\Bigr)^{2-p}
\Bigl(
\sum_Q\rho_Q
\sum_{P_L\supseteq P_Q}\widetilde{\sigma}_L
\Bigr)^{p-1}.
\]
For \(p=2\), the same bound holds with equality.
Interchanging the sums, enlarging the inner sum, and applying
Lemma~\ref{lem:branch-packing} with exponent \(0\), we obtain
\[
\sum_Q\rho_Q
\sum_{P_L\supseteq P_Q}\widetilde{\sigma}_L
=
\sum_L\widetilde{\sigma}_L
\sum_{P_Q\subseteq P_L}\rho_Q
\lesssim
\eta^{-1}\sum_L\widetilde{\sigma}_L\rho_L.
\]
Since \(\widetilde{\sigma}_Q\rho_Q=\sigma(Q)\), it follows that
\[
\|\mathsf T_{j,R}(\sigma)\|_{L^p_\omega}^p
\lesssim_p
\eta^{-(p-1)}
[\omega,\sigma]_{\Acal_{p,\Gamma_j}(\mu)}
\sum_Q\sigma(Q).
\]

Now suppose that \(p>2\). Grouping the sum by the dyadic generation of \(P_Q\) and applying Lemma~\ref{lemma: principal lemma expectation version} to \((\mathbb R^d,\mu_\omega)\), with exponent \(p\) and \(w=1\), gives
\[
\|\mathsf T_{j,R}(\sigma)\|_{L^p_\omega}^p
\lesssim_p
\sum_Q
\widetilde{\sigma}_Q
\Bigl(
\frac{1}{\omega(P_Q)}
\sum_{P_K\subseteq P_Q}
\widetilde{\sigma}_K\omega(P_K)
\Bigr)^{p-1}
\omega(P_Q).
\]

Using \(\omega(P_K)=\widetilde{\omega}_K\rho_K\) and Lemma~\ref{lem:coefficient-estimates}, then enlarging the \(K\)-sum and applying Lemma~\ref{lem:branch-packing} with exponent
\((p-2)/(p-1)\), we obtain
\[
\begin{aligned}
\frac{1}{\omega(P_Q)}
\sum_{P_K\subseteq P_Q}
\widetilde{\sigma}_K\omega(P_K)
&=
\frac{1}{\omega(P_Q)}
\sum_{P_K\subseteq P_Q}
\widetilde{\sigma}_K\widetilde{\omega}_K\rho_K \\
&\le
\frac{
[\omega,\sigma]_{\Acal_{p,\Gamma_j}(\mu)}^{1/(p-1)}
}{\omega(P_Q)}
\sum_{\substack{K\in\mathcal S_j\\P_K\subseteq P_Q}}
(\widetilde{\omega}_K)^{(p-2)/(p-1)}\rho_K 
\\&
\lesssim_{p} \eta^{-1}
[\omega,\sigma]_{\Acal_{p,\Gamma_j}(\mu)}^{1/(p-1)}
(\widetilde{\omega}_Q)^{-1/(p-1)}.
\end{aligned}
\]
Substituting this estimate into the preceding bound gives
\[
\|\mathsf T_{j,R}(\sigma)\|_{L^p_\omega}^p
\lesssim_p
\eta^{-(p-1)}
[\omega,\sigma]_{\Acal_{p,\Gamma_j}(\mu)}
\sum_Q
\widetilde{\sigma}_Q
(\widetilde{\omega}_Q)^{-1}\omega(P_Q)=
\eta^{-(p-1)}
[\omega,\sigma]_{\Acal_{p,\Gamma_j}(\mu)}
\sum_Q\sigma(Q).
\]

In both cases, passing to the full collection by monotone convergence gives
\begin{equation}
\label{eq:direct-testing-before-packing}
\|\mathsf T_{j,R}(\sigma)\|_{L^p_\omega}^{p}
\lesssim_p
\eta^{-(p-1)}
[\omega,\sigma]_{\Acal_{p,\Gamma_j}(\mu)}
\sum_{\substack{Q\in\mathcal S_j\\Q\subseteq R}}\sigma(Q)\le
\eta^{-p}[\omega,\sigma]_{\Acal_{p,\Gamma_j}(\mu)}[\sigma]_{\AinfD}\,\sigma(R),
\end{equation}
where the last inequality follows from \cite[Theorem~4.2]{nieraeth_weighted_2026}.
Taking \(p\)-th roots proves the direct estimate.

For the dual estimate, note that the reversed relation
\[
\Gamma_j^{-1}:=\{(P_Q,Q):Q\in\mathcal S_j\}
\]
is again a branch, since injectivity and the two branch-nesting
conditions are invariant under interchanging the coordinates.
Both coordinate families remain subfamilies of \(\mathcal S\).
By the definition of the weight characteristics,
\[
[\sigma,\omega]_{\Acal_{p',\Gamma_j^{-1}}(\mu)}
=
[\omega,\sigma]_{\Acal_{p,\Gamma_j}(\mu)}^{p'/p}.
\]
Applying the direct estimate with exponent \(p'\) and with
\(\sigma,\omega\) interchanged, gives the desired estimate.
\end{proof}

\begin{proof}[Proof of Theorem~\ref{thm: finite complexity mixed intro}]
By positivity, it suffices to consider nonnegative \(f\) and \(g\).
Using Proposition~\ref{prop:finite-complexity-branch-decomposition},
we can write
\[
\Gamma\cap(\mathcal S\times\mathcal S)
=\bigsqcup_{j=1}^{M_\kappa}\Gamma_j,
\qquad M_\kappa\lesssim_{d,\kappa}1.
\]
The coordinate families of each branch are subfamilies of
\(\mathcal S\), so Proposition~\ref{prop:branch-mixed-estimate}
applies. For each branch,
\[
\Chat_{\mathcal S,\Gamma_j}(f\sigma,g\omega)
=\int_{\R^d} \mathsf T_j^\sigma f\,g\,\mathrm d\mu_\omega
\le
\|\mathsf T_j^\sigma\|_{L^p_\sigma\to
L^p_\omega}
\|f\|_{L^p_\sigma}
\|g\|_{L^{p'}_\omega}.
\]
Proposition~\ref{prop:branch-mixed-estimate} bounds the operator norm uniformly in \(j\). Summing the displayed estimate over the \(M_\kappa\lesssim_{d,\kappa}1\) branches from Proposition~\ref{prop:finite-complexity-branch-decomposition} proves the theorem.
\end{proof}

\subsection{One weight consequences}
\label{sec:one-weight-diagonal-consequence}
We now specialize to \(\sigma=\omega^{1-p'}\) to obtain
one weight estimates and write
\[
[\omega]_{\Acal_{p,\Gamma}(\mu)}
:=
[\omega,\omega^{1-p'}]_{\Acal_{p,\Gamma}(\mu)}.
\]
The first bound involves both the usual characteristic \([\omega]_{A_p^{\ms{D}}(\mu)}\) and {\([\omega]_{\Acal_{p,\Gamma}(\mu)}\).}
If \((Q,Q)\in\Gamma\) for every \(Q\in\mathcal S\), we obtain a bound involving only {\([\omega]_{\Acal_{p,\Gamma}(\mu)}\)}.
\begin{corollary}
\label{cor:finite-complexity-one-weight}
Let \(1<p<\infty\) {and \(0<\eta<1\). Suppose \(\mathcal S\) is \(\eta\)-sparse, and that \(\Gamma\) has complexity at most \(\kappa\).}
Let \(\omega\) and \(\sigma=\omega^{1-p'}\) be weights.
Then, for \(f\in L^p_\omega\) and \(g\in L^{p'}_\sigma\), 
\[
\begin{aligned}
|\Chat_{\mathcal S,\Gamma}(f,g)|
&\lesssim_{p,d,\kappa,\eta}
[\omega]_{\Acal_{p,\Gamma}(\mu)}^{1/p}
\Bigl(
[\omega]_{A_p^{\ms{D}}(\mu)}^{1/(p(p-1))}
+[\omega]_{A_p^{\ms{D}}(\mu)}^{1/p'}
\Bigr)
\|f\|_{L^p_\omega}\|g\|_{L^{p'}_\sigma}.
\end{aligned}
\]
If \(\{(Q,Q):Q\in\mathcal S\}\subseteq\Gamma\), then
\[
|\Chat_{\mathcal S,\Gamma}(f,g)|
\lesssim_{p,d,\kappa,\eta}
[\omega]_{\Acal_{p,\Gamma}(\mu)}^{\max\{1,1/(p-1)\}}
\|f\|_{L^p_\omega}\|g\|_{L^{p'}_\sigma}.
\]
\end{corollary}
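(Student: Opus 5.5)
The plan is to reduce to Theorem~\ref{thm: finite complexity mixed intro} by a change of variables, and then bound both dyadic $A_\infty$ characteristics by the one-weight $A_p$ characteristic.

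Given $f\in L^p_\omega$ and $g\in L^{p'}_\sigma$, write $f=(f/\sigma)\sigma$ and $g=(g/\omega)\omega$. Since $\sigma=\omega^{1-p'}$, we have $|f/\sigma|^p\sigma=|f|^p\sigma^{1-p}=|f|^p\omega$. Hence $\|f/\sigma\|_{L^p_\sigma}=\|f\|_{L^p_\omega}$, and symmetrically $\|g/\omega\|_{L^{p'}_\omega}=\|g\|_{L^{p'}_\sigma}$. Theorem~\ref{thm: finite complexity mixed intro} then gives
\[
|\Chat_{\mathcal S,\Gamma}(f,g)|\lesssim [\omega]_{\Acal_{p,\Gamma}(\mu)}^{1/p}\bigl([\sigma]_{\AinfD}^{1/p}+[\omega]_{\AinfD}^{1/p'}\bigr)\|f\|_{L^p_\omega}\|g\|_{L^{p'}_\sigma}.
\]

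Next I will show $[\omega]_{\AinfD}\lesssim_p[\omega]_{A_p^{\ms D}(\mu)}$ and $[\sigma]_{\AinfD}\lesssim_p[\sigma]_{A_{p'}^{\ms D}(\mu)}=[\omega]_{A_p^{\ms D}(\mu)}^{1/(p-1)}$. Substituting these gives the first claimed bound, since $[\sigma]_{\AinfD}^{1/p}\lesssim[\omega]_{A_p^{\ms D}(\mu)}^{1/(p(p-1))}$.

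For the first inequality, fix $Q\in\ms D$. Localization gives $\Mdyad(\omega\mathbf 1_Q)\le \langle\omega\rangle_Q^\mu$ on $Q$ from the cubes containing $Q$, plus $M^{Q}(\omega\mathbf 1_Q)$, the maximal function over subcubes of $Q$. For $P\subseteq Q$, Hölder gives
\[
\langle \omega\rangle_P^\mu\le [\omega]_{A_p^{\ms D}(\mu)}\langle \sigma\rangle_P^{1-p}\le [\omega]_{A_p^{\ms D}(\mu)}\bigl(\langle \sigma^{-1/(p-1)}\cdots\bigr).
\]
A cleaner route is the standard argument
\[
\langle\omega\rangle_P^\mu=\langle \omega\mathbf 1_Q\rangle_P^\mu\le[\omega]_{A_p}\,\bigl(\langle\omega\sigma\cdots\rangle\bigr),
\]
that is, $M^Q(\omega\mathbf 1_Q)\le [\omega]_{A_p}\,\bigl(M^Q_{\sigma}(\sigma^{-1}\mathbf 1_Q)^{p-1}\bigr)$ with the $\sigma$-weighted dyadic maximal operator. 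Here one uses $\langle\sigma\rangle_P^{1-p}=(\langle \sigma^{-1}\rangle_P^\sigma)^{p-1}$ and the identity $\omega=\sigma^{1-p}$. Integrating over $Q$ against $\mu$ and converting to $\mu_\sigma$ requires care. If it becomes messy, I will instead cite the standard fact, valid for arbitrary locally finite $\mu$ in the dyadic setting (e.g.\ via the Hytönen--Pérez argument using weighted Doob), that $[\omega]_{\AinfD}\lesssim_p[\omega]_{A_p^{\ms D}(\mu)}$. This step, proving $A_\infty\lesssim A_p$ for non-doubling $\mu$, is the only real obstacle. I expect the Fujii--Wilson formulation to make it work, since only Doob-type inequalities on $\mu_\sigma$ enter, and no doubling is needed.

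For the second statement, if all diagonal pairs $(Q,Q)$ with $Q\in\mathcal S$ lie in $\Gamma$, then $[\omega]_{A_p^{\ms D}(\mu)}$ restricted to cubes of $\mathcal S$ is at most $[\omega]_{\Acal_{p,\Gamma}(\mu)}$, because $c_{Q,Q}=\mu(Q)$. The characteristics in the first bound only need to be taken over $\mathcal S$. Indeed, in the proof of Proposition~\ref{prop:forward-testing}, $A_\infty$ entered only through the sparse packing $\sum_{Q\subseteq R,Q\in\mathcal S}\sigma(Q)$, so I would re-run that packing with the sparse-restricted quantity. Alternatively, I would argue by restricting the supremum in the $A_\infty\lesssim A_p$ estimate to sparse cubes via the sparse packing $\sum_{Q\in\mathcal S,Q\subseteq R}\sigma(Q)\lesssim\eta^{-1}[\sigma]_{A_{p'}}\sigma(R)$, using $\sigma(Q)\le[\sigma]_{A_{p'}}\mu(Q)^{p'}\ldots$ through Hölder and the disjoint sets $E_Q$. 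The total exponent is then $1/p+\max\{1/(p(p-1)),1/p'\}=\max\{1/(p-1),1\}$.
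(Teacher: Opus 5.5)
Your proposal is correct and follows essentially the same route as the paper: the change of variables $f/\sigma$, $g/\omega$ combined with the cited bound $[\omega]_{\AinfD}\le \ee\,[\omega]_{A_p^{\ms D}(\mu)}$ from Hyt\"onen--P\'erez, which the paper also simply cites as valid for general $\mu$ (so your abandoned direct derivation is unnecessary), gives the first estimate. For the second estimate, the paper likewise replaces the $A_\infty$ packing in the testing estimates by the sparse packings $\sigma(Q)\le\eta^{-p'}[\omega]_{\Acal_{p,\Gamma}(\mu)}^{1/(p-1)}\sigma(E_Q)$ and $\omega(Q)\le\eta^{-p}[\omega]_{\Acal_{p,\Gamma}(\mu)}\,\omega(E_Q)$, obtained from H\"older's inequality and the diagonal terms; your $\eta^{-1}$ should be $\eta^{-p'}$, which is harmless since the constants may depend on $\eta$.
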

\begin{proof}
By \cite[Proposition 2.2]{hytonen_sharp_2011}, which works verbatim for a general measure $\mu$, we have
\[
[\omega]_{\AinfD}
\leq \ee\, [\omega]_{A_p^{\ms{D}}(\mu)}\qquad\text{and}\qquad
[\sigma]_{\AinfD}
\leq \ee\, [\sigma]_{A_{p'}^{\ms{D}}(\mu)}
=\ee\,[\omega]_{A_p^{\ms{D}}(\mu)}^{1/(p-1)}.
\]
Apply Theorem~\ref{thm: finite complexity mixed intro}
to \(f/\sigma\) and \(g/\omega\), using
\[
\|f/\sigma\|_{L^p_\sigma}
=\|f\|_{L^p_\omega}
\qquad\text{and}\qquad
\|g/\omega\|_{L^{p'}_\omega}
=\|g\|_{L^{p'}_\sigma}.
\]
The first estimate follows.

For the second estimate, assume that
\(\{(Q,Q):Q\in\mathcal S\}\subseteq\Gamma\).
Hölder's inequality and the diagonal terms in the weight characteristic give
\[
\omega(Q)
\le
\eta^{-p}
{[\omega]_{\Acal_{p,\Gamma}(\mu)}}
\,\omega(E_Q)\qquad\text{and}\qquad
\sigma(Q)
\le
\eta^{-p'}
{[\omega]_{\Acal_{p,\Gamma}(\mu)}^{1/(p-1)}}
\,\sigma(E_Q).
\]
Summing over the disjoint sets \(E_Q\) gives the corresponding
weighted packing estimates on \(\mathcal S\).
Using these packing estimates in
\eqref{eq:direct-testing-before-packing} and its counterpart
for the reversed branch gives direct and dual testing bounds
with the respective factors
\[
[\omega]_{\Acal_{p,\Gamma}(\mu)}
^{1/(p-1)}
\qquad\text{and}\qquad
[\omega]_{\Acal_{p,\Gamma}(\mu)}
.
\]
Applying Proposition~\ref{prop:finite-complexity-testing}
on each branch to \(f/\sigma\) and \(g/\omega\),
and summing over the branches, proves the second estimate.
\end{proof}
\section{Application to sparse forms for balanced measures}
\label{sec:cpw-comparison}
We finally specialize to \(d=1\) and return to dyadic intervals. To prove Corollary~\ref{cor:cpw-sharp-quantitative}, we first compare the coefficients in the sparse form in \cite{CPW} with those in Theorem~\ref{thm: finite complexity mixed intro}. We then show that our ordered weight characteristic is equivalent to the \(A_p^N\) characteristic in \cite{CPW}.

Fix \(N\in\mathbb N_0\). For a dyadic interval \(I\), set
\[
m_\mu(I):=\frac{\mu(I_-)\mu(I_+)}{\mu(I)}.
\]
An atomless measure \(\mu\) is called balanced with constant \(\beta\ge1\) if
\[
\beta^{-1}m_\mu(K^{(1)})
\le m_\mu(K)
\le \beta\,m_\mu(K^{(1)})
\]
for every dyadic interval \(K\). For a sparse family \(\mathcal S\subseteq \ms{D}\), the two forms in the sparse domination result in \cite[Theorem 3.3]{CPW} are
\begin{align*}
\Lambda_{\mathcal S}(f,g)
&:=
\sum_{I\in\mathcal S}
\avg{f}{I}\avg{g}{I}\mu(I)\\
\mathcal C_{\mathcal S}^{N}(f,g)
&:=
\sum_{\substack{I,J\in\mathcal S\\
2<\distD(I,J)\le N+2\\
I\cap J=\varnothing}}
\avg{f}{I}\avg{g}{J}
\sqrt{m_\mu(I)m_\mu(J)}.
\end{align*}
Define the separated relation
\[
\Gamma_N^{\mathrm{sep}}
:=
\{(I,J)\in\mathscr D\times\mathscr D:
I\cap J=\varnothing,\ 2<\distD(I,J)\le N+2\}
\]
and put
\[
\Gamma_N
:=
\{(I,I):I\in\mathscr D\}\cup\Gamma_N^{\mathrm{sep}}.
\]
Let \(\mu\) be balanced with constant \(\beta\), and let \(T\) be a Haar shift with coefficients bounded by \(1\) and total complexity at most \(N\). For bounded, compactly supported nonnegative \(f\) and \(g\), the sparse domination theorem \cite[Theorem~3.3]{CPW} gives a sparse family \(\mathcal S\) with a uniform sparsity parameter such that
\[
\lvert\langle Tf,g\rangle\rvert
\lesssim_{N,\beta}
\Lambda_{\mathcal S}(f,g)
+\mathcal C_{\mathcal S}^{N}(f,g).
\]

\begin{lemma}[Coefficient comparison]
\label{lem:cpw-separated-coefficient-comparison}
Assume that \(\mu\) is balanced with constant \(\beta\).
Let \(I,J\in\mathscr D\) satisfy \(\distD(I,J)\le N+2\).
Then
\[
\sqrt{m_\mu(I)m_\mu(J)}
\lesssim_{N,\beta} c_{I,J}.
\]
If, in addition, \(I\cap J=\varnothing\), then
\[
c_{I,J}
\lesssim_{N,\beta}\sqrt{m_\mu(I)m_\mu(J)}.
\]
\end{lemma}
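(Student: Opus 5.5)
The plan is to reduce everything to two elementary facts about balanced measures. First, for every dyadic interval $K$ we have $m_\mu(K)\le \min\{\mu(K_-),\mu(K_+)\}\le \tfrac12\mu(K)$. Second, iterating the balanced condition along the dyadic path from $I$ to $J$ (at most $N+2$ steps through parents and children) gives $m_\mu(I)\simeq_{N,\beta} m_\mu(J)$. Indeed, each step changes $m_\mu$ by a factor at most $\beta$, so $\beta^{-(N+2)}m_\mu(J)\le m_\mu(I)\le \beta^{N+2}m_\mu(J)$.

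\emph{Upper bound.} Assume without loss of generality that $c_{I,J}=\mu(I)$. Then
\[
\sqrt{m_\mu(I)m_\mu(J)}\le \beta^{(N+2)/2}m_\mu(I)\le \beta^{(N+2)/2}\mu(I)=\beta^{(N+2)/2}c_{I,J},
\]
which proves the first claim. It uses only the comparability of $m_\mu$ along the path and $m_\mu(K)\le\mu(K)$.

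\emph{Lower bound for disjoint $I,J$.} Let $H$ be the minimal common ancestor of $I$ and $J$. Since $I\cap J=\varnothing$, neither interval contains the other, so $I$ and $J$ lie in different children of $H$, say $I\subseteq H_-$ and $J\subseteq H_+$. The key inequality to establish is $\mu(K)\lesssim_{n,\beta} m_\mu(K)$ whenever $K$ lies $n$ generations below a common ancestor $H$ and $K$ is contained in one child of $H$, with $n\le N+2$. Granting this, we get
\[
c_{I,J}\le \sqrt{\mu(I)\mu(J)}\lesssim \sqrt{m_\mu(I)m_\mu(J)}.
\]

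\emph{Main obstacle.} The difficulty is that $\mu(K)$ may be much larger than $m_\mu(K)$ when one child of $K$ carries almost all the mass; indeed, $m_\mu(K)\simeq$ the mass of the lighter child. So we cannot simply bound $\mu(I)\lesssim m_\mu(I)$ in isolation, and we need to use disjointness more carefully.

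What I would try first is to bound the minimum directly. Using $\mu(I)\le\mu(H_-)$ and $\mu(J)\le\mu(H_+)$, we have
\[
\min\{\mu(I),\mu(J)\}\le \min\{\mu(H_-),\mu(H_+)\}\le 2m_\mu(H).
\]
The last inequality holds because $m_\mu(H)=\mu(H_-)\mu(H_+)/\mu(H)\ge \tfrac12\min\{\mu(H_-),\mu(H_+)\}$, since $\mu(H)\le 2\max\{\mu(H_-),\mu(H_+)\}$.

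Since $H$ is at dyadic distance at most $N+2$ from both $I$ and $J$, the balanced condition then gives
\[
m_\mu(H)\lesssim_{N,\beta}\sqrt{m_\mu(I)m_\mu(J)}.
\]
This yields $c_{I,J}\lesssim_{N,\beta}\sqrt{m_\mu(I)m_\mu(J)}$ and avoids the obstacle entirely.

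The only care needed is the case where $I$ or $J$ is itself a child of $H$. This is covered by the same argument, since containment in the respective child of $H$ is all that is used.
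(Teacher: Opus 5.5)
Your final argument is correct and is essentially the paper's proof. For the first bound you use comparability of $m_\mu$ along the at most $N+2$ parent--child steps together with $m_\mu(K)\le\mu(K)$. For the second you use $c_{I,J}\le\min\{\mu(H_-),\mu(H_+)\}\le 2m_\mu(H)\lesssim_{N,\beta}\sqrt{m_\mu(I)m_\mu(J)}$, exactly as the paper does. The tentative ``key inequality'' $\mu(K)\lesssim m_\mu(K)$ that you float first is false in general, as you observe yourself, and should simply be deleted, since it plays no role in the argument you end up with.
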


\begin{proof}
Let \(H\) be the minimal common dyadic ancestor of \(I\) and \(J\).
Both intervals lie at most \(N+2\) generations below \(H\).
Iterating the balanced condition along the two chains gives
\[
m_\mu(I)\simeq_{N,\beta}m_\mu(H)
\simeq_{N,\beta}m_\mu(J),
\]
and therefore
\[
\sqrt{m_\mu(I)m_\mu(J)}
\lesssim_{N,\beta}m_\mu(I)
\le \frac14\mu(I).
\]
Interchanging \(I\) and \(J\) gives the same bound with \(\mu(J)\),
and hence the first inequality.

Now suppose that \(I\cap J=\varnothing\).
Let \(I_0\) and \(J_0\) be the children of \(H\) containing
\(I\) and \(J\), respectively. These children are distinct, so
\[
c_{I,J}
\le \min\{\mu(I_0),\mu(J_0)\}
\le
2\,\frac{\mu(I_0)\mu(J_0)}{\mu(I_0)+\mu(J_0)}
=2\,m_\mu(H)\lesssim_{N,\beta}\sqrt{m_\mu(I)m_\mu(J)}.
\]
This proves the second inequality.
\end{proof}

It follows from Lemma \ref{lem:cpw-separated-coefficient-comparison} that  for nonnegative \(f\) and \(g\) we have
\[
\mathcal C_{\mathcal S}^{N}(f,g)
\simeq_{N,\beta}
\Chat_{\mathcal S,\Gamma_N^{\mathrm{sep}}}(f,g).
\]

\begin{lemma}
\label{lem:ordered-cpw-characteristic}
Let \(1<p<\infty\), and let \(\omega\) and
\(\sigma=\omega^{1-p'}\) be weights. If \(\mu\) is balanced with constant
\(\beta\), then
\[
[\omega]_{\Acal_{p,\Gamma_N}(\mu)}
\simeq_{p,N,\beta}
[\omega]_{A_p^N(\mu)}.
\]
\end{lemma}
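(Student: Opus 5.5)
Both characteristics are suprema over pairs of intervals, so I will compare them term by term. The two suprema split into the same two parts: diagonal pairs \((I,I)\) and separated pairs \((I,J)\in\Gamma_N^{\mathrm{sep}}\).

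For a diagonal pair, \(c_{I,I}=\mu(I)\). The corresponding term of \([\omega]_{\Acal_{p,\Gamma_N}(\mu)}\) is therefore
\[
\frac{\mu(I)^p}{\mu(I)\mu(I)^{p-1}}\avg{\omega}{I}\bigl(\avg{\sigma}{I}\bigr)^{p-1}
=\avg{\omega}{I}\bigl(\avg{\sigma}{I}\bigr)^{p-1},
\]
which is exactly the first supremum in the definition of \([\omega]_{A_p^N(\mu)}\). So the diagonal parts agree exactly.

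For the separated pairs, I first fix the ordering. In our characteristic, the pair \((Q,P)\) carries \(\avg{\omega}{P}(\avg{\sigma}{Q})^{p-1}\) divided by \(\mu(Q)\mu(P)^{p-1}\). In the \(A_p^N\) supremum, the pair \((I,J)\) carries \(\avg{\omega}{I}(\avg{\sigma}{J})^{p-1}\) divided by \(\mu(J)\mu(I)^{p-1}\). I therefore match \(Q=J\) and \(P=I\). The CPW supremum runs over all \(I\ne J\) with \(\distD(I,J)\le N+2\). The relation \(\Gamma_N^{\mathrm{sep}}\) is symmetric in its two coordinates but only contains disjoint pairs with \(\distD>2\). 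Two sub-steps follow.

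First, on pairs that are disjoint with \(2<\distD(I,J)\le N+2\), both inequalities of Lemma~\ref{lem:cpw-separated-coefficient-comparison} apply. They give \(c_{I,J}^p\simeq_{N,\beta}(m_\mu(I)m_\mu(J))^{p/2}\), so the corresponding terms are comparable. This already gives \([\omega]_{\Acal_{p,\Gamma_N}(\mu)}\lesssim[\omega]_{A_p^N(\mu)}\), since every pair of \(\Gamma_N\) appears in the CPW supremum.

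Second, for the reverse inequality I must bound the CPW terms with \(I\ne J\) that are either nested or at dyadic distance at most \(2\). For these only the first inequality of Lemma~\ref{lem:cpw-separated-coefficient-comparison} is available. It bounds the CPW term by \(c_{I,J}^p\avg{\omega}{I}(\avg{\sigma}{J})^{p-1}/(\mu(J)\mu(I)^{p-1})\). I will dominate this by the diagonal \(A_p\) term of the minimal common ancestor \(H\). Since \(I,J\subseteq H\), we have
\[
\avg{\omega}{I}\le\frac{\mu(H)}{\mu(I)}\avg{\omega}{H},
\qquad
\avg{\sigma}{J}\le\frac{\mu(H)}{\mu(J)}\avg{\sigma}{H}.
\]
Substituting these, the term is at most
\[
\frac{c_{I,J}^p\,\mu(H)^p}{\mu(I)^p\mu(J)^p}\,\avg{\omega}{H}\bigl(\avg{\sigma}{H}\bigr)^{p-1}.
\]
It then suffices to show \(c_{I,J}\mu(H)\lesssim_{N,\beta}\mu(I)\mu(J)\), that is, \(\mu(H)\lesssim\max\{\mu(I),\mu(J)\}\). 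This does not hold for non-doubling \(\mu\) in general. Instead I will use the \(m_\mu\) weights directly. Balancedness gives \(m_\mu(I)\simeq m_\mu(J)\simeq m_\mu(H)\), and \(m_\mu(I)\le\min\{\mu(I_\pm)\}\le\mu(I)\). The CPW coefficient \((m_\mu(I)m_\mu(J))^{p/2}\) is therefore \(\simeq m_\mu(H)^p\). It remains to show
\[
m_\mu(H)\,\mu(H)\lesssim\mu(I)\mu(J).
\]
Here \(m_\mu(H)\mu(H)=\mu(H_-)\mu(H_+)\). When \(I\) and \(J\) lie in different children of \(H\), I will chain \(m_\mu\) down each side using balancedness, together with \(m_\mu(K)\le\mu(K_\pm)\), to compare \(\mu(H_-)\) with \(\mu(I)\) up to constants. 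When they are nested, I will use \(m_\mu(H)\lesssim m_\mu(I)\le\mu(I)\) and \(\mu(J)\ge\mu(I)\) in the appropriate way.

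I expect this last comparison to be the main obstacle: controlling the nested and short-distance non-diagonal CPW terms by the diagonal \(A_p\) term using only balancedness. If the ancestor route fails, my fallback is to insert an intermediate separated pair. I would compare \(J\) with a sibling-type interval \(J'\) disjoint from \(I\) and at distance greater than \(2\), with \(m_\mu(J')\simeq m_\mu(J)\). I would then use the already comparable separated term together with a doubling-type inequality for \(\sigma\)-averages. That doubling inequality would be derived from the diagonal \(A_p\) condition and balancedness.
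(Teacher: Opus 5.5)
Your proposal is correct and follows essentially the paper's route: the diagonal parts coincide, separated pairs are matched through the two-sided coefficient comparison (with the reordering $Q=J$, $P=I$), and the remaining nested and sibling pairs are dominated by the diagonal term of the minimal common ancestor $H$ via $\sqrt{m_\mu(I)m_\mu(J)}\,\mu(H)\lesssim\mu(I)\mu(J)$. One correction: your first route does not actually fail on these pairs. For nested pairs $c_{I,J}\mu(H)=\mu(I)\mu(J)$ exactly, which is how the paper treats them. The only non-nested remaining pairs are siblings, where $\mu(H)=\mu(I)+\mu(J)$ and $m_\mu(H)\mu(H)=\mu(I)\mu(J)$, so neither the chaining down the children of $H$ nor the fallback is needed (chaining to compare $\mu(H_-)$ with a strictly smaller $\mu(I)$ would in fact fail for non-doubling $\mu$).
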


\begin{proof}
Both characteristics contain the same diagonal contribution
\([\omega]_{A_p^{\ms{D}}(\mu)}\).
For \((I,J)\in\Gamma_N^{\mathrm{sep}}\),
Lemma~\ref{lem:cpw-separated-coefficient-comparison} gives
\[
\frac{c_{I,J}^p}{\mu(I)\mu(J)^{p-1}}
\avg{\omega}{J}\bigl(\avg{\sigma}{I}\bigr)^{p-1}
\simeq_{p,N,\beta}
\frac{(m_\mu(I)m_\mu(J))^{p/2}}
{\mu(I)^p\mu(J)^p}
\omega(J)\sigma(I)^{p-1}.
\]
The right-hand side is a  term in $[\omega]_{A_p^N(\mu)}$ indexed by \((J,I)\). Since interchanging \(I\) and \(J\) preserves \(\Gamma_N^{\mathrm{sep}}\), the contributions from these pairs to the two characteristics are comparable in both directions.

The remaining pairs are nested or siblings. Let \(H\) be their minimal common dyadic ancestor. In either case,
\[
\frac{\sqrt{m_\mu(I)m_\mu(J)}}{\mu(I)\mu(J)}
\lesssim_{N,\beta}\frac{1}{\mu(H)}.
\]
For nested pairs, this follows from Lemma~\ref{lem:cpw-separated-coefficient-comparison}, since \(c_{I,J}=\mu(I)\mu(J)/\mu(H)\).
For siblings, it follows from the balanced condition and \(m_\mu(H)=\mu(I)\mu(J)/\mu(H)\).

Since \(I,J\subseteq H\), the corresponding term in $[\omega]_{A_p^N(\mu)}$ satisfies
\[
\frac{(m_\mu(I)m_\mu(J))^{p/2}\omega(I)\sigma(J)^{p-1}}
{\mu(I)^p\mu(J)^p}
\lesssim_{p,N,\beta}
\frac{\omega(H)\sigma(H)^{p-1}}{\mu(H)^p}
\le [\omega]_{\Acal_{p,\Gamma_N}(\mu)},
\]
where the last inequality uses \((H,H)\in\Gamma_N\).
Taking suprema proves the reverse estimate.
\end{proof}

We conclude with a proof of Corollary~\ref{cor:cpw-sharp-quantitative}.

\begin{proof}[Proof of Corollary~\ref{cor:cpw-sharp-quantitative}]
We first consider nonnegative, bounded, compactly supported \(f\in L^p_\omega\) and \(g\in L^{p'}_\sigma\).
Applying \cite[Theorem~3.3]{CPW} to \(f\) and \(g\), we obtain
\[
\lvert\langle Tf, g\rangle\rvert
\lesssim_{N,\beta}
\Lambda_{\mathcal S}(f,g)
+\mathcal C_{\mathcal S}^{N}(f,g).
\]
Lemma~\ref{lem:cpw-separated-coefficient-comparison} and the definition of
\(\Gamma_N\) give
\[
\Lambda_{\mathcal S}(f,g)
+\mathcal C_{\mathcal S}^{N}(f,g)
\lesssim_{N,\beta}
\Chat_{\mathcal S,\Gamma_N}(f,g).
\]
The relation \(\Gamma_N\) has complexity at most \(N+2\). We have
\[
\|f/\sigma\|_{L^p_\sigma}
=\|f\|_{L^p_\omega}
\qquad\text{and}\qquad
\|g/\omega\|_{L^{p'}_\omega}
=
\|g\|_{L^{p'}_\sigma}.
\]
Applying Theorem~\ref{thm: finite complexity mixed intro} to \(f/\sigma\) and \(g/\omega\), and using Lemma~\ref{lem:ordered-cpw-characteristic}, gives
\[
\begin{aligned}
\lvert\langle Tf, g\rangle\rvert
&\lesssim_{p,N,\beta}
[\omega]_{A_p^N(\mu)}^{1/p}
\bigl([\sigma]_{\AinfD}^{1/p}+[\omega]_{\AinfD}^{1/p'}\bigr)
\|f\|_{L^p_\omega}
\|g\|_{L^{p'}_\sigma}.
\end{aligned}
\]
For general \(f\) and \(g\), decompose their real and imaginary parts into positive and negative parts. Density and duality then give the first operator bound.

Since \(\Gamma_N\) contains the diagonal, Corollary~\ref{cor:finite-complexity-one-weight} applies to \(f\) and \(g\). Combining it with Lemma~\ref{lem:ordered-cpw-characteristic} and using duality gives the second estimate. Since the balanced constant is determined by \(\mu\), the implicit constants may be written as \(\lesssim_{p,N,\mu}\).
\end{proof}
The form \(\Chat_{\mathcal S,\Gamma}\) from Section~\ref{sec:finite-complexity} allows nested pairs.
The following remark shows that extending the two sided comparison in Lemma~\ref{lem:cpw-separated-coefficient-comparison} to all parent child pairs would force a balanced measure to be dyadically doubling.
\begin{remark}
Let \(\mu\) be balanced, and let \(J\) be a child of \(I\) with maximal \(\mu\)-measure. Since \(c_{I,J}=\mu(J)\), balancedness gives
\[
\frac{c_{I,J}}{\sqrt{m_\mu(I)m_\mu(J)}}
\simeq_\beta
\frac{\mu(J)}{m_\mu(I)}
=
\frac{\mu(I)}{\min\{\mu(I_-),\mu(I_+)\}}.
\]
Thus a uniform estimate
\[
c_{I,J}\lesssim\sqrt{m_\mu(I)m_\mu(J)}
\]
on all parent child pairs would bound the last ratio uniformly over \(I\in\mathscr D\). Equivalently, both children would have measure bounded below by a fixed positive multiple of \(\mu(I)\), which is the definition of  dyadic doubling.
For a balanced measure that is not dyadically doubling, the comparison therefore fails already at dyadic distance one. Theorem~\ref{thm: finite complexity mixed intro} uses \(c_{I,J}\) directly and does not require this comparison.
\end{remark}
\section*{AI disclosure statement}
ChatGPT and Codex by OpenAI were used to explore and improve proof strategies for this paper. More specifially:
\begin{itemize}
\item For Theorem~\ref{thm: sparse operators weak type bound}, the authors proposed combining the layer-decomposition strategy from the case \(p=r\) with a weighted change of measure viewpoint. This was developed and refined through interactions with GPT-5.6 Pro, leading to the weighted counting argument used in the final proof.

\item When reviewing an earlier proof of Theorem~\ref{thm: stoch t1 seq test}, GPT-5.5 identified a flaw in the argument and assisted in developing a corrected version.
    \item For Theorem~\ref{thm: finite complexity mixed intro}, the authors had established the general proof strategy, after which GPT-5.5 Pro was used in many iterations to optimize the argument and arrive at its present formulation. Further improvements were obtained through similar interactions with GPT-5.6 Pro after a first manuscript version of the proof had been written.
\end{itemize}
GPT-5.5, GPT-5.6 and GPT-6 were furthermore used to check and improve arguments, to revise parts of the exposition, and to search for relevant references. All mathematical arguments in their final form were written and verified by the authors, who take full responsibility for all claims and arguments in the paper.
\bibliographystyle{abbrv}
\bibliography{references-francisco}

\end{document}